\documentclass{article}

\usepackage[frenchb,english]{babel}

\usepackage{amsmath}
\usepackage{amssymb}
\usepackage{amsthm}
\usepackage{amsfonts} 
\usepackage{stmaryrd}
\usepackage{graphicx}
\usepackage{comment}
\usepackage{dsfont}
\usepackage{xcolor}
\usepackage{mathtools}
\usepackage{bigints}
\usepackage{yhmath}
\usepackage{enumitem} 
\usepackage{subcaption} 
\usepackage{appendix} 
\usepackage{geometry} 
\geometry{hmargin=2.5cm,vmargin=2.5cm} 
\usepackage{authblk} 
\usepackage{empheq} 
\usepackage{tikz} 


\newtheorem{lemma}{Lemma}

\newtheorem{theor}{Theorem}
\newtheorem{corol}{Corollary}
\newtheorem{defin}{Definition}
\newtheorem{remar}{Remark}

\newcommand{\vertii}[1]{{\left\vert\kern-0.3ex\left\vert #1 
    \right\vert\kern-0.3ex\right\vert}}

\newcommand{\vertiii}[1]{{\left\vert\kern-0.3ex\left\vert\kern-0.3ex\left\vert #1 
    \right\vert\kern-0.3ex\right\vert\kern-0.3ex\right\vert}}
    
\newcommand*\dd{\mathop{}\!\mathrm{d}}

\title{On the local Maxwellians solving the Boltzmann equation with boundary condition}
\author{Th\'eophile Dolmaire}
\affil{Dipartimento di Ingegneria e Scienze dell’Informazione e Matematica (DISIM),\\Università degli Studi dell’Aquila,\\Edificio Renato Ricamo, via Vetoio, Coppito, 67100 L’Aquila, Italy}

\begin{document}

\maketitle

\begin{abstract}
\noindent
We derive the expressions of the local Maxwellians that solve the Boltzmann equation in the interior of an open domain. We determine which of these local Maxwellians satisfy the Boltzmann equation in a regular domain with boundary, without assuming the boundedness of the domain. We investigate separately, on the one hand, the case of the bounce-back boundary condition in any dimension, and on the other hand the case of the specular reflection boundary condition, in dimension $d = 2$ and $d = 3$. In the case of the bounce-back boundary condition, we prove that the only local Maxwellians solving the Boltzmann equation with boundary condition are the global Maxwellians. In the case of the specular reflection, we provide a complete classification of the domains for which only the global Maxwellians solve the Boltzmann equation with boundary condition, and we describe all the local Maxwellians that solve the equation for the domains presenting symmetries.
\end{abstract}

\textbf{Keywords.} Boltzmann equation; Local Maxwellian; Boundary condition.\\

\section{Introduction}
\numberwithin{equation}{section}

\noindent
In 1872, Ludwig Boltzmann \cite{Bolt964} introduced the following equation, in order to describe dilute gases composed with particles that interact with each other via binary collisions:
\begin{align}
\label{EQUAT_Boltzmann_}
\partial_t f + v\cdot\nabla_x f = \int_{v_*\in\mathbb{R}^d}\int_{\omega\in\mathbb{S}^{d-1}} B\big(\vert v-v_* \vert,\left\vert \frac{v-v_*}{\vert v-v_* \vert} \cdot \omega \right\vert \big) \left[ f(v')f(v'_*) - f(v)f(v_*)\right] \dd \omega \dd v_*.
\end{align}
Here, the unknown $f=f(t,x,v)$ of the Boltzmann equation \eqref{EQUAT_Boltzmann_} is a probability density for all time $t$ on the space $\Omega \times \mathbb{R}^d \ni (x,v)$ (the particles constituting the gas are assumed to evolve in the domain $\Omega \subset \mathbb{R}^d$, where $d$ is an integer, representing the dimension of the physical space that is considered), and:
\begin{align}
f(t,x,v) \dd x \dd v
\end{align}
represents the number of particles of the gas lying in the infinitesimal volume $x + \dd x$ and moving with a velocity in $v + \dd v$ at time $t$. $v'$ and $v_*'$, the \emph{post-collisional velocities}, are defined as:
\begin{align}
\label{EQUATIntroLoiDeCollision_}
v' = v - (v-v_*)\cdot\omega\omega, \hspace{10mm} v_*' = v_* + (v-v_*)\cdot\omega\omega,
\end{align}
$\omega$ is the \emph{angular parameter}, and $B\big(\vert v-v_* \vert,\left\vert \frac{v-v_*}{\vert v-v_* \vert} \cdot \omega \right\vert\big)$, the \emph{collision kernel}, describes the rate at which collisions involving two particles with relative velocities $v-v_*$ and colliding with an angular parameter $\omega$ take place.\\
For instance, if we assume that the particles interact as hard spheres, the angular parameter $\omega$ represents the direction of the line joining the respective centers of the two colliding particles, and in such a case the collision kernel $B$ is:
\begin{align}
B(\vert v-v_* \vert,\left\vert \frac{v-v_*}{\vert v-v_* \vert} \cdot \omega \right\vert) = \vert (v-v_*)\cdot\omega \vert.
\end{align}
To complete the description of the model, boundary conditions need to be added to \eqref{EQUAT_Boltzmann_}. We will assume that the domain $\Omega$ is an open set, with a boundary $\partial\Omega$ regular enough such that at any point $x\in\partial\Omega$ we can define an outgoing unitary normal vector $n(x)$ to the boundary. In the present article, we will consider the two following boundary conditions: the bounce-back boundary condition, defined as
\begin{align}
\label{EQUATIntroBounceBack_B_C_}
\forall t \in [0,T],\, x \in \partial \Omega,\, v \in \mathbb{R}^d,\hspace{3mm} f(t,x,v) = f(t,x,-v).
\end{align}
and the specular reflection boundary condition, defined as
\begin{align}
\label{EQUATIntroSpecuRefle_B_C_}
\forall t \in [0,T], x \in \partial \Omega, v \in \mathbb{R}^d, f(t,x,v) = f(t,x,v'),
\end{align}
with
\begin{align}
\label{EQUATIntroSpecuRefle_v'__}
v' = v - 2 \left( v \cdot n(x) \right) n(x).
\end{align}
These two conditions, written at the mesoscopic level (that is, for the unknown $f$ of the Boltzmann equation \eqref{EQUAT_Boltzmann_}), reflect the model chosen to describe the dynamics of the particles that compose the gas, when such a particle collides with the boundary $\partial\Omega$. In the case of the bounce-back boundary condition, the velocity $v$ of a particle about to leave the domain $\Omega$ is immediately changed into $v'=-v$, while in the specular reflection case, the reflected velocity $v'$ is defined by \eqref{EQUATIntroSpecuRefle_v'__}. For other possible boundary conditions and discussions concerning the relevance of the laws \eqref{EQUATIntroBounceBack_B_C_} and \eqref{EQUATIntroSpecuRefle_B_C_}, the reader may refer to \cite{Cerc988}, \cite{CeIP994} and \cite{Vill002}.\\
\newline
The collision law \eqref{EQUATIntroLoiDeCollision_} describes elastic collisions: momentum and kinetic energy are conserved when two particles collide. These conserved quantities can be recovered at the level of the Boltzmann equation: if $f$ is a solution of \eqref{EQUAT_Boltzmann_} (at least, in the absence of boundary), then
\begin{align}
\frac{\dd}{\dd t} \int_x\int_v \begin{pmatrix} 1 \\ v \\ \vert v \vert^2 \end{pmatrix} f(t,x,v) \dd v \dd x = 0.
\end{align}
More surprisingly, if we define the \emph{entropy} $H$ as the functional:
\begin{align}
H(f)(t) = \int_x\int_v f(t,x,v) \ln f(t,x,v) \dd v \dd x,
\end{align}
then if $f$ is a solution of the Boltzmann equation, we have
\begin{align}
\frac{\dd}{\dd t} H(f)(t) = - \int_x D(f)(x) \dd x \leq 0,
\end{align}
with $D$ the \emph{entropy production} defined as
\begin{align}
D(f) = \frac{1}{4} \int_x\int_v\int_{v_*}\int_\omega B \left[ f(v')f(v'_*)-f(v)f(v_*)\right] \ln \frac{f(v')f(v'_*)}{f(v)f(v_*)}.
\end{align}
In addition, we have that $\int_x D(f)(x) \dd x = 0$ if and only if there exist three functions $\rho,a: \Omega \rightarrow \mathbb{R}_+$ and $u: \Omega \rightarrow \mathbb{R}^d$ such that:
\begin{align}
f(x,v) = \rho(x) \exp\left( -a(x) \vert v - u(x) \vert^2 \right).
\end{align}
These two statements constitute the celebrated \emph{H-theorem}, discovered by Boltzmann (\cite{Bolt964}). In addition, let us observe that any function of the form:
\begin{align}
\label{EQUATIntroMaxweGloba}
m(t,x,v) = \rho_0 \exp\left(-a_0\vert v-u_0 \vert^2\right),
\end{align}
where $\rho_0,a_0 \in \mathbb{R}_+$, $u_0 \in \mathbb{R}^d$ are constant, is a solution to the Boltzmann equation. A function of the form \eqref{EQUATIntroMaxweGloba} is called a \emph{global Maxwellian}, whereas a function of the form:
\begin{align}
\label{EQUATIntroMaxweLocal}
m(t,x,v) = \rho(t,x) \exp\left( -a(t,x) \vert v - u(t,x) \vert^2 \right),
\end{align}
with $\rho,a: I\times \Omega \rightarrow \mathbb{R}_+$ and $u: I\times\Omega \rightarrow \mathbb{R}^d$ ($I \subset \mathbb{R}$), is called a \emph{local Maxwellian}.\\
\newline
If we can show that the only local Maxwellians solving the Boltzmann equation \eqref{EQUAT_Boltzmann_} are global Maxwellians, then the H-theorem would indicate the long-time behaviour of the solutions of the Boltzmann equation: we would expect that any solution $f$ converges towards the only global Maxwellian that has the same mass, momentum and kinetic energy as $f$. For this reason, determining the local Maxwellians that solve the Boltzmann equation is of central importance in order to understand the long-time behaviour of the solutions of \eqref{EQUAT_Boltzmann_}.\\
\newline
Since a local Maxwellian cancels the collision term of the Boltzmann equation (that is, the right hand side of \eqref{EQUAT_Boltzmann_}), a local Maxwellian $m$ solves the Boltzmann equation if and only if $m$ solves the free transport equation, that is:
\begin{align}
\partial_t m + v\cdot\nabla_x m = 0.
\end{align}
It is well-known how to determine the local Maxwellians that solve the free transport equation, and such a result was already obtained by Boltzmann himself \cite{Bolt012}. The problem is also discussed by Cercignani in \cite{Cerc988}. In this case, an external field acting on the particles is also considered.\\
As a matter of fact, in the case of a domain without boundary, many local Maxwellians, that are not global, solve the free transport equation. More generally, the question of the long-time behaviour of the solutions of the Boltzmann equation is a very challenging problem, that remains an open question nowadays. The difficulties arise from the mixing of the effects of the collision term of the Boltzmann equation on the one hand, and of the transport term $-v\cdot\nabla_x$ on the other hand. Under the action of the collision term, the solutions converge to local Maxwellians (as it can be rigorously proved in the case of the homogeneous Boltzmann equation). However, the collision term is local in $x$, whereas the transport term $-v\cdot\nabla_x f$ acts on the two variables $x$ and $v$ of the phase space, introducing major complications.\\
Nevertheless, let us mention that important results were obtained in \cite{DeVi005} concerning the long-time behaviour of the solutions of the non-homogeneous Boltzmann equation \eqref{EQUAT_Boltzmann_}. For more information on the Boltzmann equation, the reader may refer to the classical references \cite{Cerc988}, \cite{CeIP994} and \cite{Vill002}.\\
\newline
Back to the question of determining the local Maxwellians solving \eqref{EQUAT_Boltzmann_}, the case of a domain with a boundary reduces dramatically the possibilities. A discussion concerning the geometry of the boundary can already be found in the article \cite{Grad965} of Grad. A classification of the boundaries as well as the complete proof of this classification can be found in \cite{Desv990}. Essentially, the result is that if the boundary of the domain does not present any symmetry, then only the global Maxwellians solve the free transport equation in a bounded domain. However, the results of \cite{Desv990} are obtained under the assumption that the domain $\Omega$ is bounded. Besides, to the best of our knowledge, there exists no analogous result for more general domains. Therefore, and considering the central role of the local Maxwellians in the trend to converge to equilibrium for solutions of the Boltzmann equation, it seemed relevant to provide a discussion similar to the one presented in \cite{Desv990} in the case when the domain $\Omega$ is not necessarily bounded.\\
\newline
In the present article, we determine completely the local Maxwellians that solve the Boltzmann equation in a general domain, not necessarily bounded. In particular, when such a domain presents a boundary, we perform the study in the case of the bounce-back boundary condition, and in the case of the specular reflection boundary condition.\\
The plan of the article is the following. In the second section, we derive the expressions of the local Maxwellians that solve the free transport equation in the interior of an open domain. This section follows the references \cite{Cerc988} and \cite{Desv990}. In the third section, we study the local Maxwellians solving \eqref{EQUAT_Boltzmann_} in a domain with a boundary, considering the bounce-back boundary condition. In particular, we prove that only the global Maxwellians are admissible solutions. Finally, in the fourth section we turn to the case of the specular reflection boundary condition. In this section we determine local Maxwellians solving the Boltzmann equation that were not discussed in \cite{Desv990}.

\paragraph{Notations.}

Throughout this article, we will use the following notations.\\
\newline
For $d>1$ any integer larger than $1$, we will denote the $d\times d$ identity matrix by $I_d$:
\begin{align}
I_d = \begin{pmatrix} 1 & 0 & \dots \\ 0 & 1 & \dots \\
\vdots & \vdots & \ddots \end{pmatrix}.
\end{align}
For $m,n,p \geq 1$ three positive integers, the product of the two matrices $M \in \mathcal{M}_{m \times n}$ and $N \in \mathcal{M}_{n\times p}$ will be denoted by:
\begin{align}
MN.
\end{align}
In particular, the product of a $d \times d$ square matrix $M$ with a vector $u \in \mathbb{R}^d$ will be denoted by $Mu$ (the vectors of $\mathbb{R}^d$ are seen as $d\times 1$ matrices).\\
\newline
The transpose of a matrix $M$ will be denoted by $^t \hspace{-0.5mm}M$.\\
\newline
The scalar product of two vectors $u,v \in \mathbb{R}^d$ will be denoted by $u \cdot v$.\\
\newline
Finally, in dimension $d=3$, the cross-product of two vectors $u = (a,b,c)\in\mathbb{R}^3$ and $v = (x,y,z) \in \mathbb{R}^3$ will be denoted by $u \wedge v$, that is:
\begin{align}
u \wedge v = \begin{pmatrix} a \\ b \\ c \end{pmatrix} \wedge \begin{pmatrix} x \\ y \\ z \end{pmatrix} = \begin{pmatrix} bz-cy \\ cx-az \\ ay-bx \end{pmatrix}.
\end{align}
If $A$ is a $3 \times 3$ skew-symmetric matrix of the form:
\begin{align}
A = \begin{pmatrix} 0 & -c & b \\ c & 0 & -a \\ -b & a & 0 \end{pmatrix},
\end{align}
we say that \emph{$A$ is represented by the vector $u$}, or that \emph{the vector $u$ represents the matrix $A$} with $u = (a,b,c)$, that is, $u$ is the only vector of $\mathbb{R}^3$ such that:
\begin{align}
A x = u\wedge x \hspace{5mm} \forall x \in \mathbb{R}^3.
\end{align}

\paragraph{Assumptions on the domain}

In all the rest of the article, the domain will be denoted by $\Omega$, and will be a non empty, connected open set of $\mathbb{R}^d$. When we will say that \emph{$\Omega$ has a boundary}, we will mean that $\partial\Omega$ is non empty, and $\partial\Omega$ is a $\mathcal{C}^1$ manifold of codimension $1$.

\section{Obtaining the expressions of the admissible local Maxwellians}

In this section we will derive the expression of a local Maxwellian, that solves the Boltzmann equation in dimension $d \geq 2$, on the time interval $[0,T]$, in the interior of a domain $\Omega \subset \mathbb{R}^d$, where $\Omega$ is open and regular.\\
The expression of a local Maxwellian is written as:
\begin{align}
\label{EQUATSect2ExpreMaxweLocal}
m(t,x,v) = \rho(t,x) \exp \left( -a(t,x) \left\vert v - u(t,x) \right\vert^2 \right),
\end{align}
where we assume that $\rho,a : \mathbb{R}\times\Omega \rightarrow \mathbb{R}_+$ are strictly positive almost everywhere, and $u:\mathbb{R}\times\Omega \rightarrow \mathbb{R}^d$. We assume also that $\rho$, $a$ and $u$ are differentiable.\\
It is well known that $Q(m,m)=0$ if and only if $m$ is a local Maxwellian (see for instance \cite{Bolt964} or \cite{Vill002}), that is, of the form of \eqref{EQUATSect2ExpreMaxweLocal}. Therefore, a local Maxwellian solves the Boltzmann equation in the domain $\Omega$ if and only if
\begin{align}
\label{EQUATSect2Transport_Libre}
\forall \, (t,x,v) \in [0,T] \times \Omega \times \mathbb{R}^d, \hspace{5mm} \partial_t m + v\cdot\nabla_x m = 0,
\end{align}
that is, the local Maxwellian $m$ solves the free transport equation. It is a classical result (\cite{Bolt012}, \cite{Cerc988}, \cite{Desv990}, \cite{ReVi008}) to determine completely the local Maxwellians that solve the free transport equation. More precisely, such local Maxwellians can be described as follows.

\begin{theor}[Local Maxwellians solving the free transport equation]
\label{THEORSS2.1ExpreMaxweLocGn}
Let $m : [0,T] \times \Omega \times \mathbb{R}^d \rightarrow \mathbb{R}_+$ be a local Maxwellian of the form:
\begin{align*}
m(t,x,v) = \rho(t,x) \exp \left( -a(t,x) \left\vert v - u(t,x) \right\vert^2 \right),
\end{align*}
with $\rho$, $a$ and $u$ differentiable and $u$ twice differentiable in $x$, with $\rho$ and $a$ positive almost everywhere, that solves on $]0,T[\times\Omega\times\mathbb{R}^d$ the free transport equation:
\begin{align*}
\partial_t m + v \cdot\nabla_x m = 0.
\end{align*}
Then, there exist four real number $r_0$, $\alpha$, $\beta$ and $\gamma \in \mathbb{R}$, two vectors $w_1$ and $w_2 \in \mathbb{R}^d$, and a skew-symmetric matrix $\Lambda_0 \in \mathcal{M}_d(\mathbb{R})$ such that:
\begin{align}
\label{EQUATSS2.1ExpreMaxweLocGn}
\forall\, (t,x,v) \in [0,T]\times\Omega\times\mathbb{R}^d, \hspace{2mm} m(t,x,v) &= r_0 \exp \Big( -\alpha \vert x-tv \vert^2 + \beta (x-tv)\cdot v - \gamma \vert v \vert^2 \nonumber\\
&\hspace{20mm} + 2 \left(\Lambda_0(x-tv)\right)\cdot v - 2w_1\cdot(x-tv) +2w_2\cdot v\Big).
\end{align} 
\end{theor}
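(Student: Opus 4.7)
The plan is to use that, since $m$ is strictly positive, the free transport equation on $m$ is equivalent to the same equation on $L := \ln m$, and then to exploit that $L$ is a polynomial of degree $2$ in $v$. Writing
\begin{align*}
L(t,x,v) = A(t,x) + B(t,x) \cdot v + C(t,x) \vert v \vert^2,
\end{align*}
with $A = \ln\rho - a\vert u\vert^2$, $B = 2 a u$, $C = -a$, one sees that $\partial_t L + v \cdot \nabla_x L$ is a polynomial in $v$ of degree at most three, whose coefficients depend on $(t,x)$. Requiring this polynomial to vanish for every $v \in \mathbb{R}^d$ and collecting the parts of degree $3$, $2$, $1$ and $0$ successively will provide four structural constraints on $A$, $B$ and $C$.

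From the cubic part $\vert v \vert^2 \, (v \cdot \nabla_x C)$ I would first deduce $\nabla_x C \equiv 0$, so that $C = C(t)$ depends only on $t$. The symmetric part of the remaining quadratic form in $v$ next imposes
\begin{align*}
\frac{1}{2} \bigl( D_x B + {}^t(D_x B) \bigr) + C'(t) \, I_d = 0,
\end{align*}
a Killing-type equation on $B(t,\cdot)$ whose general solution reads
\begin{align*}
B(t,x) = - C'(t)\, x + \Lambda(t)\, x + w(t),
\end{align*}
where $\Lambda(t)$ is a skew-symmetric matrix and $w(t) \in \mathbb{R}^d$. The linear part then gives $\nabla_x A = - \partial_t B$, and since the left hand side is a gradient, the $x$-Jacobian of the right hand side, which equals $C''(t)\, I_d - \Lambda'(t)$, must be symmetric; being skew-symmetric as well, $\Lambda'(t)$ must vanish, so that $\Lambda \equiv \Lambda_0$ is a constant skew-symmetric matrix. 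Integrating in $x$ then yields $A(t,x) = \frac{1}{2} C''(t) \vert x \vert^2 - w'(t) \cdot x + h(t)$ for some scalar function $h$. Finally, the degree zero equation $\partial_t A = 0$ imposes $C''' = 0$, $w'' = 0$ and $h' = 0$, so $C$ is a polynomial in $t$ of degree at most two, $w$ is affine in $t$, and $h$ is constant.

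To conclude, I would substitute these expressions into $m = \exp(L)$ and reorganize the exponent around the variable $x - t v$. Using that $\Lambda_0 v \cdot v = 0$ by skew-symmetry, expanding the form announced in the statement and matching the coefficients of $\vert x \vert^2$, $x \cdot v$, $\vert v \vert^2$, $\Lambda_0 x \cdot v$, $x$ and $v$ determines $\alpha$, $\beta$, $\gamma$, $w_1$, $w_2$ and $r_0 = e^{h}$ uniquely in terms of the coefficients of $C$, $w$ and $h$, modulo conventional factors of $2$ appearing in front of $\Lambda_0$, $w_1$ and $w_2$ in the statement. The step I expect to be the main obstacle is precisely the extraction of $\Lambda'(t) = 0$: it combines the fact that $- \partial_t B$ must be a gradient in $x$ (which encodes the symmetry of the second partial derivatives of $A$) with the skew-symmetry of $\Lambda(t)$, forcing $\Lambda'(t)$ to be simultaneously symmetric and skew-symmetric and hence to vanish. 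None of the individual pieces is hard in isolation, but one has to be attentive to the interplay of the various symmetries, and careful with the bookkeeping in the final algebraic matching.
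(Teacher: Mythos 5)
Your proposal is correct, and it reaches the conclusion by a route that is structurally parallel to the paper's but organized around a different set of variables, which makes the final integration step genuinely simpler. The paper works directly with the physical coefficients $\rho$, $a$, $u$, expands the transport equation as a polynomial in $z = v - u(t,x)$, and extracts the system \eqref{EQUATSS2.1Systeme_rho_a_u}; the same skeleton appears in both arguments (cubic part forces the coefficient of $\vert v \vert^2$ to be $x$-independent, quadratic part is a Killing-type equation whose solution is an affine field with skew-symmetric linear part, the gradient/Hessian-symmetry condition at degree one kills $\Lambda'(t)$, and the degree-zero part closes the system in $t$). The difference is that you take $L = \ln m$ and expand in powers of $v$ itself, i.e.\ you work with $A = \ln\rho - a\vert u\vert^2$, $B = 2au$, $C = -a$. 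In these variables the residual system in $t$ is \emph{linear} and reads $C''' = 0$, $w'' = 0$, $h' = 0$, which integrates immediately; the paper, working with $\sigma_0 = a$, $C(t)$ and $\rho_0$, obtains instead the nonlinear system \eqref{EQUATSS2.4SysteC_Lambda0_Sigm0} and must find the integrating combinations $\frac{\dd}{\dd t}(\sigma_0^2\phi)$, $\frac{\dd^2}{\dd t^2}\left[(\alpha t^2+\beta t+\gamma)C\right]$ and $\frac{\dd}{\dd t}(\sigma_0\vert C\vert^2)$ by hand. Your parametrization buys linearity and a shorter endgame; the paper's buys direct access to the hydrodynamic fields $a$ and $u$, whose explicit forms \eqref{EQUATSS2.4ExpreCompl__a__} and \eqref{EQUATSS2.3DeuxiExpre__u__} are what is actually reused in Sections 3 and 4 to exploit the boundary conditions. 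Two minor points to keep in mind: the passage to $\ln m$ and the division by $\rho$ are legitimate only where $\rho > 0$, so one should invoke the almost-everywhere positivity and continuity exactly as the paper implicitly does; and the affine-field conclusion from the Killing equation, as well as the integration of $\nabla_x A$, use the connectedness of $\Omega$, which is part of the standing assumptions. Neither is a gap.
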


\begin{remar}
The expression \eqref{EQUATSS2.1ExpreMaxweLocGn} is consistent with the fact that $m$ is a solution of the free transport equation, since it depends only on $v$ and $x-tv$.\\
Let us observe that the expression can be simplified as:
\begin{align}
\label{EQUATSS2.1ExpreMaxweLocGn}
m(t,x,v) &= r_0 \exp \Big( -\alpha \vert x-tv \vert^2 + \beta (x-tv)\cdot v - \gamma \vert v \vert^2 + 2 \left(\Lambda_0 x\right)\cdot v - 2w_1\cdot(x-tv) +2w_2\cdot v\Big),
\end{align}
using that $\Lambda_0$ is a skew-symmetric matrix, so that $(\Lambda_0 v)\cdot v = 0$.
\end{remar}
\noindent
We will prove the result of Theorem \ref{THEORSS2.1ExpreMaxweLocGn}, following the following different steps, dedicating a section for each of them.
\begin{itemize}
\item First, we will derive a system of equations on the coefficients $\rho$, $a$ and $u$ defining the local Maxwellians. This system will enable to deduce directly that $a$ does not depend on the position variable $x$.
\item Second, we will obtain a first expression for the bulk velocity $u$.
\item Third, we will deduce a first expression for the density $\rho$ of the Maxwellian.
\item Fourth, combining the previous expression of $a$, $\rho$ and $u$, we deduce the final expressions of these quantities.
\item Fifth, we conclude and obtain the expression of a local Maxwellian that solves the free transport equation.
\end{itemize}

\subsection{Deriving a system on $\rho$, $a$ and $u$}

We can obtain a system verified by the coefficients $\rho$, $a$ and $u$ defining the local Maxwellian $m$. To do so, we compute directly, for all $(t,x,v) \in\ ]0,T[ \times \Omega \times \mathbb{R}^d$:
\begin{align}
\label{EQUATSS2.1DerivParttMaxwL}
\partial_t m &= \left( \partial_t \rho \right) \exp \left( - a \vert v-u \vert^2 \right) + \rho \partial_t\left( -a \vert v-u \vert^2 \right) \exp \left( - a \vert v-u \vert^2 \right) \nonumber\\
&= \Big[ \partial_t \rho - \rho \left(\partial_t a\right) \vert v-u \vert^2 + 2\rho a \left(\partial_t u\right) \cdot (v-u) \Big] \exp \left( - a \vert v-u \vert^2 \right),
\end{align}
and, in the same way:
\begin{align}
\label{EQUATSS2.1DerivvParxMaxwL}
v \cdot \nabla_x m &= \Big[ v\cdot\nabla_x \rho - \rho v \cdot \left(\nabla_x a \right) \vert v-u \vert^2 + 2\rho a \Big( \sum_{i=1}^d v_i \partial_{x_i} u\cdot (v-u) \Big) \Big].
\end{align}
Using \eqref{EQUATSS2.1DerivParttMaxwL} and \eqref{EQUATSS2.1DerivvParxMaxwL}, \eqref{EQUATSect2Transport_Libre} becomes:
\begin{align}
\label{EQUATSS2.1TrnspLibreCoeff}
-\partial_t \rho + \rho \left(\partial_t a\right) \vert v-u \vert^2 - 2\rho a \left(\partial_t u\right) \cdot (v-u) - v \cdot \nabla_x \rho + \rho v \cdot \left( \nabla_x a \right) \vert v-u \vert^2 - 2\rho a \Big( \sum_{i=1}^d v_i \partial_{x_i} u\cdot (v-u) \Big) = 0.
\end{align}
The equation \eqref{EQUATSS2.1TrnspLibreCoeff} holds for all $t \in\, ]0,T[$, $x \in \Omega$, $v \in \mathbb{R}^d$. Fixing the two first variables $t$ and $x$, \eqref{EQUATSS2.1TrnspLibreCoeff} can be rewritten as a polynomial equation in the $d$ variables
\begin{align}
z_i = v_i - u_i(t,x)
\end{align}
as follows. Identifying the term of degree $3$, we obtained:
\begin{align}
\rho v \cdot \left( \nabla_x a \right) \vert v-u \vert^2 = \rho \left( \sum_{i=1}^d z_i \partial_{x_i} a \right) \left( \sum_{j=1}^d z_j^2 \right) + \rho \left(u \cdot \nabla_x a\right) \left( \sum_{j=1}^d z_j^2 \right),
\end{align}
where a remainder, of degree $2$, appears. Considering now the terms of degree $2$ in \eqref{EQUATSS2.1TrnspLibreCoeff}, we have:
\begin{align}
\rho \left( \partial_t a \right) \vert v-u \vert^2 - 2 \rho a \left( \sum_{i=1}^d v_i \partial_{x_i} u \cdot (v-u) \right) &= \rho \left( \partial_t a \right) \left( \sum_{i=1}^d z_i^2 \right) - 2 \rho a \left( \sum_{i=1}^d \sum_{j=1}^d \partial_{x_i} u_j z_i z_j \right) \nonumber\\
&\hspace{40mm} - 2\rho a \left( \sum_{i=1}^d\sum_{j=1}^d \partial_{x_i}u_j u_i z_j \right),
\end{align}
where, here again, a remainder of degree $1$ appears. Considering finally the two terms of degree $1$ in \eqref{EQUATSS2.1TrnspLibreCoeff} we have:
\begin{align}
- 2 \rho a \left( \partial_t u \right) \cdot (v-u) - v \cdot \nabla_x \rho = - 2 \rho a \left( \sum_{i=1}^d \partial_t u_i z_i \right) - \left( \sum_{i=1}^d \partial_{x_i} \rho z_i \right) - u \cdot \nabla_x \rho,
\end{align}
with $-u \cdot \nabla_x \rho$, a last remainder, of degree $0$, appears. In the end, \eqref{EQUATSS2.1TrnspLibreCoeff} can be rewritten, in terms of the variables $z_i$, as:
\begin{align}
\label{EQUATSS2.1TrnspLbCoePolyn}
0 &= \rho \left( \sum_{i=1}^d z_i \partial_{x_i} a \right) \left( \sum_{j=1}^d z_j^2 \right) \nonumber\\
&\hspace{5mm} + \rho \left(u \cdot \nabla_x a\right) \left( \sum_{i=1}^d z_i^2 \right) + \rho \left( \partial_t a \right) \left( \sum_{i=1}^d z_i^2 \right) - 2 \rho a \left( \sum_{i=1}^d \sum_{j=1}^d \partial_{x_i} u_j z_i z_j \right) \nonumber\\
&\hspace{5mm} - 2\rho a \left( \sum_{i=1}^d\sum_{j=1}^d \partial_{x_i}u_j u_i z_j \right) - 2 \rho a \left( \sum_{i=1}^d \partial_t u_i z_i \right) - \left( \sum_{i=1}^d \partial_{x_i} \rho z_i \right) \nonumber\\
&\hspace{5mm} - u \cdot \nabla_x \rho - \partial_t\rho.
\end{align}
With \eqref{EQUATSS2.1TrnspLbCoePolyn}, we have rewritten \eqref{EQUATSS2.1TrnspLibreCoeff} as a polynomial equation in the variables $z_i$, of (total) degree $3$. Equating all the coefficients to zero, we obtain the following result.

\begin{lemma}[System solved by the coefficients $\rho$, $a$ and $u$]
\label{LEMMESS2.1SystmCoeffrhoau}
Let $m : [0,T] \times \Omega \times \mathbb{R}^d \rightarrow \mathbb{R}_+$ be a local Maxwellian of the form:
\begin{align*}
m(t,x,v) = \rho(t,x) \exp \left( -a(t,x) \left\vert v - u(t,x) \right\vert^2 \right),
\end{align*}
with $\rho$, $a$ and $u$ differentiable, with $\rho$ and $a$ positive almost everywhere, that solves on $]0,T[\times\Omega\times\mathbb{R}^d$ the free transport equation:
\begin{align*}
\partial_t m + v \cdot\nabla_x m = 0.
\end{align*}
Then, the coefficients $\rho$, $a$ and $u$ solve the following system of partial differential equations:
\begin{subequations}
\label{EQUATSS2.1Systeme_rho_a_u}
\begin{empheq}[left=\empheqlbrace]{align}
\rho \left( \partial_{x_i} a \right) &= 0 \hspace{8mm} \forall\, 1 \leq i \leq d, \label{EQUATSS2.1Systemerho_a_u1}\\
\rho \left[ u\cdot \nabla_x a + \partial_t a -2a (\partial_{x_i}u_i) \right] &= 0 \hspace{8mm} \forall\, 1 \leq i \leq d, \label{EQUATSS2.1Systemerho_a_u2}\\
\rho a \left[ \partial_{x_i} u_j + \partial_{x_j} u_i \right] &= 0  \hspace{5mm} \forall\, 1 \leq i < j \leq d,\label{EQUATSS2.1Systemerho_a_u3}\\
2\rho a \nabla_x u_i \cdot u + 2\rho a (\partial_t u_i) + \partial_{x_i}\rho &= 0  \hspace{8mm} \forall\, 1 \leq i \leq d, \label{EQUATSS2.1Systemerho_a_u4}\\
\partial_t \rho + u\cdot \nabla_x \rho &= 0. \label{EQUATSS2.1Systemerho_a_u5}
\end{empheq}
\end{subequations}
\end{lemma}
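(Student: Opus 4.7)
The plan is to read the system \eqref{EQUATSS2.1Systeme_rho_a_u} off directly from the polynomial identity \eqref{EQUATSS2.1TrnspLbCoePolyn} that has already been obtained. Fix $(t,x) \in\, ]0,T[ \times \Omega$; as $v$ ranges over $\mathbb{R}^d$, so does the shifted variable $z = (z_1,\dots,z_d)$ defined by $z_i = v_i - u_i(t,x)$, and the right-hand side of \eqref{EQUATSS2.1TrnspLbCoePolyn} is a polynomial in $z$ of total degree three that must vanish for every $z \in \mathbb{R}^d$. Since the monomials $1$, $z_i$, $z_i z_j$ for $i \le j$, and $z_i z_j z_k$ for $i \le j \le k$ are linearly independent as functions on $\mathbb{R}^d$, each of their coefficients must be zero. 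The five families of equations of \eqref{EQUATSS2.1Systeme_rho_a_u} will then be obtained by extracting these coefficients, one total degree at a time.

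Degree $3$ only receives contributions from the first line of \eqref{EQUATSS2.1TrnspLbCoePolyn}: the expansion of $\rho\bigl(\sum_i z_i \partial_{x_i}a\bigr)\bigl(\sum_j z_j^2\bigr)$ puts $\rho\,\partial_{x_i}a$ in front of $z_i^3$ (and the same coefficient in front of $z_i z_j^2$ for $j\neq i$), which forces \eqref{EQUATSS2.1Systemerho_a_u1}. At degree $2$, the coefficient of $z_i^2$ collects $\rho(u\cdot\nabla_x a)+\rho(\partial_t a)$ from the two diagonal blocks $\rho(u\cdot\nabla_x a)\sum_i z_i^2$ and $\rho(\partial_t a)\sum_i z_i^2$, together with the diagonal contribution $-2\rho a\,\partial_{x_i} u_i$ coming from the $(i=j)$ part of $-2\rho a \sum_{i,j} \partial_{x_i} u_j z_i z_j$; this reproduces \eqref{EQUATSS2.1Systemerho_a_u2}. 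The off-diagonal coefficient of $z_i z_j$ for $i<j$ in that same sum pairs the $(i,j)$ and $(j,i)$ indices to yield $-2\rho a(\partial_{x_i}u_j + \partial_{x_j} u_i) = 0$, which is \eqref{EQUATSS2.1Systemerho_a_u3}. At degree $1$, the coefficient of $z_j$ assembles $-2\rho a\,(\nabla_x u_j \cdot u)$ from $-2\rho a\sum_{i,j} \partial_{x_i} u_j u_i z_j$, together with $-2\rho a\,\partial_t u_j$ and $-\partial_{x_j}\rho$; multiplying by $-1$ gives \eqref{EQUATSS2.1Systemerho_a_u4}. Finally the degree $0$ part is just $-\partial_t\rho - u\cdot\nabla_x \rho$, which is \eqref{EQUATSS2.1Systemerho_a_u5}.

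I expect no conceptual obstacle: once \eqref{EQUATSS2.1TrnspLbCoePolyn} is in hand the result is pure bookkeeping. The only point requiring a bit of care is the degree $2$ off-diagonal contribution, where the symmetrization $\partial_{x_i} u_j z_i z_j + \partial_{x_j} u_i z_j z_i$ must be collected properly so that the coefficient of $z_i z_j$ ($i<j$) comes out as the symmetrized quantity $\partial_{x_i}u_j + \partial_{x_j}u_i$ rather than a single partial derivative; this is the source of equation \eqref{EQUATSS2.1Systemerho_a_u3}.
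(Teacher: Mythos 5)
Your proposal is correct and follows exactly the paper's route: the paper derives the polynomial identity \eqref{EQUATSS2.1TrnspLbCoePolyn} in the variables $z_i = v_i - u_i(t,x)$ and then obtains the system by equating all coefficients to zero, which is precisely the bookkeeping you carry out (and you spell out the degree-by-degree extraction, including the symmetrization giving \eqref{EQUATSS2.1Systemerho_a_u3}, in more detail than the paper does). No gaps.
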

\noindent
From the first equation of \eqref{EQUATSS2.1Systeme_rho_a_u} and the assumption that $\rho > 0$ almost everywhere, we deduce in particular the following result.

\begin{corol}[First simplification of the coefficient $a$]
\label{COROLSS2.1PremiSimpl__a__}
Under the assumptions of Lemma \ref{LEMMESS2.1SystmCoeffrhoau}, the coefficient $a$ of the local Maxwellian $m$ does not depend on the position variable $x$. In other words, there exists $\sigma_0: [0,T] \rightarrow \mathbb{R}_+$, differentiable, such that
\begin{align}
\label{EQUATSS2.1PremiExpre__a__}
\forall\, (t,x) \in [0,T]\times\Omega, \hspace{5mm} a(t,x) = \sigma_0(t),
\end{align}
which is strictly positive almost everywhere.
\end{corol}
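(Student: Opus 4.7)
The plan is to read off the corollary essentially directly from the first equation of the system \eqref{EQUATSS2.1Systeme_rho_a_u} derived in Lemma \ref{LEMMESS2.1SystmCoeffrhoau}. Equation \eqref{EQUATSS2.1Systemerho_a_u1} asserts that $\rho(t,x)\,\partial_{x_i}a(t,x)=0$ pointwise for every $(t,x)\in\,]0,T[\,\times\Omega$ and each index $1\le i\le d$. Since by assumption $\rho>0$ almost everywhere on $]0,T[\,\times\Omega$, dividing by $\rho$ on that full-measure set gives $\partial_{x_i}a=0$ almost everywhere.

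The next step is to upgrade this almost-everywhere vanishing to an identical vanishing. Under the standing regularity assumption that $a$ is differentiable (and in particular continuous), one fixes $t$ and uses that the slice $\{x\in\Omega : \rho(t,x)>0\}$ is dense in $\Omega$ for almost every $t$; combining this with the continuity of $a$ in $x$ and the fact that $\Omega$ is connected, one concludes that $x\mapsto a(t,x)$ is constant on $\Omega$ for each such $t$, and then by continuity in $t$ this holds for every $t\in[0,T]$.

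It then suffices to define $\sigma_0(t) := a(t,x_0)$ for some fixed base point $x_0\in\Omega$; the equality \eqref{EQUATSS2.1PremiExpre__a__} holds by construction. Differentiability of $\sigma_0$ in $t$ is inherited from the differentiability of $a$, and strict positivity almost everywhere of $\sigma_0$ is inherited from the same property of $a$.

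If there is any obstacle at all, it lies in the passage from "$\partial_{x_i}a=0$ almost everywhere" to "$x\mapsto a(t,x)$ is constant": for a merely differentiable function this requires invoking continuity of $a$ together with the density of the set where the identity holds, rather than a naive application of the fundamental theorem of calculus. Apart from this small point, the corollary is a direct reading of \eqref{EQUATSS2.1Systemerho_a_u1}.
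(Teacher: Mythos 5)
Your argument is correct and is exactly the paper's: the corollary is read off directly from \eqref{EQUATSS2.1Systemerho_a_u1} together with the assumption that $\rho>0$ almost everywhere (the paper states this deduction without further detail). Your additional care in upgrading $\partial_{x_i}a=0$ from almost everywhere to everywhere, using the continuity of $a$, the density of the set where $\rho>0$, and the connectedness of $\Omega$, is a sound refinement of a point the paper leaves implicit.
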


\subsection{Obtaining a first expression for $u$}

Using the simplification concerning $a$ obtained in Corollary \ref{COROLSS2.1PremiSimpl__a__}, we can rewrite the last four equations of the system \eqref{EQUATSS2.1Systeme_rho_a_u} as:
\begin{subequations}
\label{EQUATSS2.1System2_rho_a_u}
\begin{empheq}[left=\empheqlbrace]{align}
\rho \left[ \sigma_0' - 2 \sigma_0 (\partial_{x_i}u_i) \right] &= 0 \hspace{8mm} \forall\, 1 \leq i \leq d, \label{EQUATSS2.1System2rho_a_u1}\\
\rho \sigma_0 \left[ \partial_{x_i} u_j + \partial_{x_j} u_i \right] &= 0  \hspace{5mm} \forall\, 1 \leq i < j \leq d,\label{EQUATSS2.1System2rho_a_u2}\\
2\rho \sigma_0 \nabla_x u_i \cdot u + 2\rho \sigma_0 (\partial_t u_i) + \partial_{x_i}\rho &= 0  \hspace{8mm} \forall\, 1 \leq i \leq d, \label{EQUATSS2.1System2rho_a_u3}\\
\partial_t \rho + u\cdot \nabla_x \rho &= 0. \label{EQUATSS2.1System2rho_a_u4}
\end{empheq}
\end{subequations}
In particular, the two first equations \eqref{EQUATSS2.1System2rho_a_u1} and \eqref{EQUATSS2.1System2rho_a_u2} suggest to consider the matrix $A$, where:
\begin{align}
A = - \frac{\sigma_0'}{2\sigma_0} I_d + \text{Jac}(u),
\end{align}
$I_d$ denoting the $d \times d$ identity matrix, and $\text{Jac}_x(u)$ the Jacobian matrix of the velocity field $u$. By construction, $A$ is skew-symmetric. Therefore, the vector field:
\begin{align}
(t,x) \mapsto W(t,x) = u(t,x) - \frac{\sigma_0'}{2\sigma_0} x
\end{align}
has its Jacobian matrix (in $x$, at $t$ fixed) $\text{Jac}_x W(t,x) = A(t,x)$ which is skew-symmetric, so, provided that $u$ is twice differentiable with respect to the position variable $x$, we deduce that $W$ is an affine mapping in $x$:
\begin{align}
\forall\, (t,x) \in [0,T]\times\Omega,\hspace{5mm} W(t,x) = \Lambda(t) x + C(t),
\end{align}
for some $\Lambda(t) \in \mathcal{M}_d(\mathbb{R})$ and $C(t) \in \mathbb{R}^d$. We deduce a first result concerning the expression of $u$.

\begin{lemma}[First expression of the velocity field $u$]
\label{LEMMESS2.2PremiExpre__u__}
Let $m : [0,T] \times \Omega \times \mathbb{R}^d \rightarrow \mathbb{R}_+$ be a local Maxwellian of the form:
\begin{align*}
m(t,x,v) = \rho(t,x) \exp \left( -a(t,x) \left\vert v - u(t,x) \right\vert^2 \right),
\end{align*}
with $\rho$, $a$ and $u$ differentiable, and $u$ twice differentiable in $x$, with $\rho$ and $a$ positive almost everywhere, that solves on $]0,T[\times\Omega\times\mathbb{R}^d$ the free transport equation:
\begin{align*}
\partial_t m + v \cdot\nabla_x m = 0.
\end{align*}
Then, there exist a non-negative real-valued, differentiable function $t \mapsto \sigma_0(t) \geq 0$, strictly positive almost everywhere, which coincides with the function given in Corollary \ref{COROLSS2.1PremiSimpl__a__}, a time-dependent, skew-symmetric matrix $t \mapsto \Lambda(t) \in \mathcal{M}_d(\mathbb{R})$ and a time-dependent vector $t \mapsto C(t) \in \mathbb{R}^d$ such that the velocity field $u$ can be written as:
\begin{align}
\label{EQUATSS2.2PremiExpre__u__}
u(t,x) = \Lambda(t) x + \frac{\sigma_0'}{2\sigma_0}x + C(t).
\end{align}
\end{lemma}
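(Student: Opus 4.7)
The approach is to exploit the algebraic structure of the system \eqref{EQUATSS2.1System2_rho_a_u} together with the fact, from Corollary \ref{COROLSS2.1PremiSimpl__a__}, that $a(t,x) = \sigma_0(t)$ does not depend on $x$. First I would use the strict positivity almost everywhere of $\rho$ and $\sigma_0$, combined with the continuity (differentiability) of every quantity involved, to cancel these factors out of \eqref{EQUATSS2.1System2rho_a_u1} and \eqref{EQUATSS2.1System2rho_a_u2} on a dense open set of $]0,T[\times\Omega$ and then extend the resulting pointwise identities to the whole open set by continuity. This yields $\partial_{x_i} u_i(t,x) = \sigma_0'(t)/(2\sigma_0(t))$ for every $i$, and $\partial_{x_i} u_j + \partial_{x_j} u_i = 0$ for $i \neq j$, which together encode precisely that the matrix $\text{Jac}_x u(t,x) - \frac{\sigma_0'(t)}{2\sigma_0(t)} I_d$ is skew-symmetric at every point $(t,x)$.

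Following the hint given before the statement, I define $W(t,x) = u(t,x) - \frac{\sigma_0'(t)}{2\sigma_0(t)} x$, so that $\text{Jac}_x W(t,x)$ is skew-symmetric for every $(t,x)$. The core step is then to show that, at each fixed $t$, a twice differentiable vector field on $\Omega$ with skew-symmetric Jacobian is necessarily affine in $x$. This is a classical Killing-type rigidity statement, which I would establish as follows: differentiating $\partial_i W_j + \partial_j W_i = 0$ with respect to a third variable $x_k$, writing the three cyclic permutations of the triple $(i,j,k)$, and then combining them using Schwarz' theorem on the commutation of mixed partial derivatives produces an identity of the form $2\partial_i \partial_k W_j = 0$. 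Hence every second partial derivative of $W$ in $x$ vanishes; since $\Omega$ is connected, this forces $W$ to be affine in $x$.

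Writing $W(t,x) = \Lambda(t) x + C(t)$ for some matrix $\Lambda(t) \in \mathcal{M}_d(\mathbb{R})$ and vector $C(t) \in \mathbb{R}^d$, both depending on $t$ alone, evaluating the Jacobian in $x$ gives $\Lambda(t) = \text{Jac}_x W(t,x)$, which is skew-symmetric by construction. Substituting back into the definition of $W$ yields exactly the claimed expression \eqref{EQUATSS2.2PremiExpre__u__}, with the function $\sigma_0$ coinciding by construction with the one provided by Corollary \ref{COROLSS2.1PremiSimpl__a__}.

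I expect the main obstacle to be this rigidity step: the skew-symmetry of the Jacobian is a purely pointwise algebraic condition, and promoting it to a global structural statement on $W$ requires the extra regularity (twice differentiability in $x$) that is explicitly imposed in the hypotheses, in order to invoke Schwarz' theorem and close the cyclic-permutation identity. Note that equations \eqref{EQUATSS2.1System2rho_a_u3} and \eqref{EQUATSS2.1System2rho_a_u4} are not used at this stage and will presumably be exploited in the subsequent subsections to pin down the temporal dependence of $\Lambda$, $C$, $\sigma_0$, and $\rho$.
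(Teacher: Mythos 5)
Your proposal follows the paper's own route: both pass from equations \eqref{EQUATSS2.1System2rho_a_u1}--\eqref{EQUATSS2.1System2rho_a_u2} to the skew-symmetry of $\text{Jac}_x u - \frac{\sigma_0'}{2\sigma_0}I_d$, introduce the same field $W(t,x)=u(t,x)-\frac{\sigma_0'}{2\sigma_0}x$, and conclude that $W$ is affine in $x$. The only difference is that you spell out (correctly, via the cyclic-permutation/Schwarz argument) the rigidity step that the paper invokes as a classical result in the remark following the lemma, so the proof is correct and essentially identical in approach.
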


\begin{remar}
It is a classical and elementary result to establish that if a smooth vector valued function $W:\mathbb{R}^d \rightarrow \mathbb{R}^d$ is such that its Jacobian matrix is everywhere skew-symmetric, then $W$ is an affine mapping. Such a result means that the movement described by the mapping $W$ corresponds to a rigid motion, since $W$ can be seen as an infinitesimal rotation.\\
Let us observe that such a result can be seen as the ``simplest possible'' version of a Korn-type inequality:
\begin{align}
\vert\hspace{-0.25mm}\vert \text{Jac}(W) \vert\hspace{-0.25mm}\vert_{L^p} \leq C_p \vert\hspace{-0.25mm}\vert \text{Jac}^s(W) \vert\hspace{-0.25mm}\vert_{L^p},
\end{align}
where $\text{Jac}^s(W) = \frac{1}{2} \left[ \text{Jac}(W) + \hspace{0.1mm}^t \hspace{-0.5mm}\text{Jac}(W)\right]$ is the symmetric part of the Jacobian matrix of $W$.\\
One of the first versions of such a result can be found in \cite{Korn906}, and was obtained in order to study the deformation of elastic materials. Korn's inequalities are supplemented with appropriate boundary conditions, motivated by physical considerations.\\
It is worthwhile to mention that versions of Korn's inequality have been established in kinetic theory (such as in \cite{DeVi002}, or more recently in \cite{CDHM022}), and used in a crucial manner to study the long-time behaviour of the solutions of the Boltzmann equation \cite{DeVi005}.
\end{remar}

\subsection{Obtaining a second expression for $u$, and a first expression for $\rho$}

Let us now turn to the density $\rho$, in order to deduce another expression for $u$. Since $\rho$ is assumed to be non-zero almost everywhere, we can rewrite \eqref{EQUATSS2.1System2rho_a_u3} as:
\begin{align}
\label{EQUATSS2.3GradiLogar_Rho_}
\frac{\partial_{x_i} \rho}{\rho} &= \partial_{x_i} \left(\ln\rho\right) = -2 \sigma_0 \nabla_x u_i \cdot u - 2 \sigma_0 \left(\partial_t u_i\right).
\end{align}
On the one hand the quantity described in \eqref{EQUATSS2.3GradiLogar_Rho_} is a gradient, so we deduce that the Jacobian matrix of this quantity is symmetric. On the other hand replacing $u$ by its expression \eqref{EQUATSS2.2PremiExpre__u__}, we find:
\begin{align}
\label{EQUATSS2.3_grad_log__rho_}
\nabla_x \left(\ln\rho\right) = M x - 2\sigma_0 \Lambda C - \sigma_0' C - 2 \sigma_0 C'
\end{align}
with
\begin{align}
\label{EQUATSS2.3DefinMatri__M__}
M = - 2 \Big[ \sigma_0 \Lambda^2 + \sigma_0' \Lambda + \sigma_0 \Lambda' \Big] - 2 \sigma_0 \left[ \left( \frac{\sigma_0'}{2\sigma_0} \right)^2 + \left(\frac{\sigma_0'}{2\sigma_0}\right)' \right] I_d.
\end{align}
We deduce therefore that the skew-symmetric part of the matrix $M$ has to be zero, that is we have the equation:
\begin{align}
\label{EQUATSS2.3Equat_Lambda(t)}
\sigma_0' \Lambda + \sigma_0 \Lambda' = 0,
\end{align}
or again, we find that $\sigma_0 \Lambda$ is a time-independent matrix. Therefore, we obtain another expression for $u$, summarized in the following lemma.

\begin{lemma}[Second expression of the velocity field $u$]
\label{LEMMESS2.3DeuxiExpre__u__}
Let $m : [0,T] \times \Omega \times \mathbb{R}^d \rightarrow \mathbb{R}_+$ be a local Maxwellian of the form:
\begin{align*}
m(t,x,v) = \rho(t,x) \exp \left( -a(t,x) \left\vert v - u(t,x) \right\vert^2 \right),
\end{align*}
with $\rho$ and $a$ differentiable and $u$ twice differentiable in $x$, with $\rho$ and $a$ positive almost everywhere, that solves on $]0,T[\times\Omega\times\mathbb{R}^d$ the free transport equation:
\begin{align*}
\partial_t m + v \cdot\nabla_x m = 0.
\end{align*}
Then, there exist a non-negative real-valued, differentiable function $t \mapsto \sigma_0(t) \geq 0$, strictly positive almost everywhere, which coincides with the function given in Corollary \ref{COROLSS2.1PremiSimpl__a__}, a skew-symmetric matrix $\Lambda_0 \in \mathcal{M}_d(\mathbb{R})$, and a time-dependent vector $t \mapsto C(t) \in \mathbb{R}^d$ such that the velocity field $u$ can be written as:
\begin{align}
\label{EQUATSS2.3DeuxiExpre__u__}
u(t,x) = \frac{1}{\sigma_0(t)} \Lambda_0 x + \left( \frac{\sigma_0'(t)}{2\sigma_0(t)} \right) x + C(t).
\end{align}
\end{lemma}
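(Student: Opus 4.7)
The plan is to refine the expression of $u$ from Lemma~\ref{LEMMESS2.2PremiExpre__u__} by exploiting equation~\eqref{EQUATSS2.1System2rho_a_u3}, which has not yet been used. The key observation is that, after dividing by $\rho$ (positive almost everywhere), \eqref{EQUATSS2.1System2rho_a_u3} can be rewritten componentwise as
\begin{align*}
\partial_{x_i}(\ln \rho) = -2\sigma_0(t)\, \nabla_x u_i \cdot u - 2\sigma_0(t)\, \partial_t u_i,
\end{align*}
thereby identifying a vector field in $x$ (at fixed $t$) that must be a gradient; its Jacobian matrix with respect to $x$ is therefore symmetric.

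Next, I would substitute the form $u(t,x) = \Lambda(t) x + \frac{\sigma_0'(t)}{2\sigma_0(t)} x + C(t)$ supplied by Lemma~\ref{LEMMESS2.2PremiExpre__u__} into the right-hand side and compute the Jacobian in $x$. Setting $\tilde\Lambda(t) = \Lambda(t) + \frac{\sigma_0'(t)}{2\sigma_0(t)} I_d$, so that $u_i = (\tilde\Lambda x)_i + C_i(t)$, one has $\partial_{x_k} u_i = \tilde\Lambda_{ik}$ and hence $\nabla_x u_i \cdot u = (\tilde\Lambda u)_i$. After expansion, the right-hand side takes the form $\nabla_x(\ln\rho)(t,x) = M(t)\, x + b(t)$, where $M(t)$ is precisely the matrix displayed in~\eqref{EQUATSS2.3DefinMatri__M__} and $b(t)$ depends only on $t$.

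Then, imposing that $M(t)$ be symmetric, I would decompose it into its symmetric and skew-symmetric parts. Since $\Lambda(t)$ is skew-symmetric, $\Lambda(t)^2$ is symmetric, and the scalar multiple of $I_d$ is also symmetric; whereas $\sigma_0'(t)\Lambda(t)$ and $\sigma_0(t)\Lambda'(t)$ remain skew-symmetric. The skew-symmetric part of $M(t)$ therefore reduces to $-2\bigl[\sigma_0'(t)\Lambda(t) + \sigma_0(t)\Lambda'(t)\bigr] = -2\bigl(\sigma_0\Lambda\bigr)'(t)$, which must vanish. Consequently $\sigma_0\Lambda$ is a time-independent skew-symmetric matrix, which I would denote by $\Lambda_0$. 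Substituting $\Lambda(t) = \Lambda_0/\sigma_0(t)$ back into the expression supplied by Lemma~\ref{LEMMESS2.2PremiExpre__u__} yields exactly~\eqref{EQUATSS2.3DeuxiExpre__u__}.

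The main obstacle is essentially bookkeeping: one must carefully expand $\nabla_x u_i \cdot u$ and $\partial_t u_i$, collect the contributions linear in $x$ into the matrix $M(t)$, and then sort out which blocks of $M(t)$ are symmetric and which are skew-symmetric. Once this is done, the conclusion is immediate, with no genuine analytical difficulty beyond careful calculation and the systematic use of the skew-symmetry of $\Lambda$.
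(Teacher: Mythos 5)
Your proposal is correct and follows essentially the same route as the paper: rewrite \eqref{EQUATSS2.1System2rho_a_u3} as $\nabla_x(\ln\rho)$, impose symmetry of its Jacobian after substituting the affine form of $u$ from Lemma \ref{LEMMESS2.2PremiExpre__u__}, and observe that the skew-symmetric part of the resulting matrix $M$ reduces to $-2(\sigma_0\Lambda)'$, forcing $\sigma_0\Lambda$ to be a constant skew-symmetric matrix $\Lambda_0$. The bookkeeping you outline (in particular $\nabla_x u_i\cdot u = (\tilde\Lambda u)_i$ and the identification of $M$ with \eqref{EQUATSS2.3DefinMatri__M__}) checks out.
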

\noindent
Let us now use the previous result \eqref{EQUATSS2.3Equat_Lambda(t)} on the matrix $\Lambda(t)$ to obtain a first expression of the density $\rho$. Since $\ln\rho$ satisfies the equation \eqref{EQUATSS2.3_grad_log__rho_} with $M$ given by \eqref{EQUATSS2.3DefinMatri__M__}, we deduce the following result.

\begin{lemma}[First expression of the density $\rho$]
\label{LEMMESS2.3SPremiExpre_rho_}
Let $m : [0,T] \times \Omega \times \mathbb{R}^d \rightarrow \mathbb{R}_+$ be a local Maxwellian of the form:
\begin{align*}
m(t,x,v) = \rho(t,x) \exp \left( -a(t,x) \left\vert v - u(t,x) \right\vert^2 \right),
\end{align*}
with $\rho$, $a$ and $u$ differentiable and $u$ twice differentiable in $x$, with $\rho$ and $a$ positive almost everywhere, that solves on $]0,T[\times\Omega\times\mathbb{R}^d$ the free transport equation:
\begin{align*}
\partial_t m + v \cdot\nabla_x m = 0.
\end{align*}
Then, there exist two non-negative real-valued, differentiable function $t \mapsto \rho_0(t) \geq 0$ and $t \mapsto \sigma_0(t) \geq 0$, strictly positive almost everywhere, which coincides with the function given in Corollary \ref{COROLSS2.1PremiSimpl__a__}, a skew-symmetric matrix $\Lambda_0 \in \mathcal{M}_d(\mathbb{R})$, and a time-dependent vector $t \mapsto C(t) \in \mathbb{R}^d$ such that the density $\rho$ can be written as:
\begin{align}
\label{EQUATSS2.3PremiExpre_rho_}
\rho(t,x) &= \rho_0(t) \exp \Big( (-x) \cdot \left( \left[ \frac{1}{\sigma_0(t)} \Lambda_0^2 + \sigma_0(t) \left( \left(\frac{\sigma_0'(t)}{2\sigma_0(t)}\right)^2 + \left(\frac{\sigma_0'(t)}{2\sigma_0(t)}\right)' \right) I_d \right] x \right) \nonumber\\
&\hspace{45mm} - 2\left(\Lambda_0 C(t) \right) \cdot x - \sigma_0'(t)( C(t) \cdot x ) - 2\sigma_0(t) ( C'(t) \cdot x ) \Big)
\end{align}
\end{lemma}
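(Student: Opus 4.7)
The plan is to integrate the gradient identity \eqref{EQUATSS2.3_grad_log__rho_} in $x$ at fixed $t$, after using the structural information about $\Lambda(t)$ already obtained in the proof of Lemma \ref{LEMMESS2.3DeuxiExpre__u__} to make the matrix $M$ fully explicit.

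First, I would insert the relation $\sigma_0' \Lambda + \sigma_0 \Lambda' = 0$ (equation \eqref{EQUATSS2.3Equat_Lambda(t)}), which says exactly that the middle bracket in the definition \eqref{EQUATSS2.3DefinMatri__M__} of $M$ vanishes. Using in addition $\sigma_0(t) \Lambda(t) = \Lambda_0$, so that $\sigma_0 \Lambda^2 = \Lambda_0^2 / \sigma_0$, the matrix $M$ reduces to
\begin{align*}
M(t) = -\frac{2}{\sigma_0(t)} \Lambda_0^2 - 2\sigma_0(t) \left[ \left( \frac{\sigma_0'(t)}{2\sigma_0(t)} \right)^2 + \left( \frac{\sigma_0'(t)}{2\sigma_0(t)} \right)' \right] I_d,
\end{align*}
and similarly $2\sigma_0 \Lambda C = 2 \Lambda_0 C$, so that \eqref{EQUATSS2.3_grad_log__rho_} becomes
\begin{align*}
\nabla_x(\ln \rho)(t,x) = M(t)\, x - 2 \Lambda_0 C(t) - \sigma_0'(t) C(t) - 2 \sigma_0(t) C'(t).
\end{align*}

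Second, I would integrate this in $x$. Since $\Lambda_0$ is skew-symmetric, $\Lambda_0^2$ is symmetric, and so is $I_d$; hence $M(t)$ is symmetric. The right-hand side is therefore an affine field in $x$ whose Jacobian is symmetric, so it is a genuine gradient, and one obtains directly
\begin{align*}
\ln \rho(t,x) = \tfrac{1}{2}\, x \cdot M(t) x + \big( - 2 \Lambda_0 C(t) - \sigma_0'(t) C(t) - 2 \sigma_0(t) C'(t) \big) \cdot x + \kappa(t),
\end{align*}
for some function $\kappa(t)$ of time only, which plays the role of a constant of integration at each fixed $t$. Setting $\rho_0(t) = e^{\kappa(t)} > 0$ and substituting the explicit form of $M(t)$ above, the quadratic term becomes precisely
\begin{align*}
\tfrac{1}{2}\, x \cdot M(t) x = (-x) \cdot \left[ \frac{1}{\sigma_0(t)} \Lambda_0^2 + \sigma_0(t) \left( \left(\frac{\sigma_0'(t)}{2\sigma_0(t)}\right)^2 + \left(\frac{\sigma_0'(t)}{2\sigma_0(t)}\right)' \right) I_d \right] x,
\end{align*}
and exponentiating yields exactly \eqref{EQUATSS2.3PremiExpre_rho_}.

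There is no real obstacle: the lemma is a purely algebraic consequence of the previous two results, and what looks intimidating in the statement is only a bookkeeping reorganisation of the constant part of $\nabla_x \ln \rho$ versus its linear part. The only thing to be careful about is tracking the cancellation of the skew-symmetric contribution to $M$ (which is precisely what \eqref{EQUATSS2.3Equat_Lambda(t)} was used for earlier: without it, $M$ would not be symmetric and $\nabla_x \ln \rho$ would not be a true gradient — so the consistency of the present computation is in fact a reassuring check on the derivation of Lemma \ref{LEMMESS2.3DeuxiExpre__u__}). Finally, the differentiability and positivity of $\rho_0(t)$ follow from those of $\rho$ together with the smoothness of the explicit quadratic-in-$x$ factor already identified.
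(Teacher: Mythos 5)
Your proposal is correct and follows essentially the same route as the paper: the paper's (very terse) argument is precisely to substitute $\sigma_0\Lambda=\Lambda_0$ into \eqref{EQUATSS2.3_grad_log__rho_}--\eqref{EQUATSS2.3DefinMatri__M__} using \eqref{EQUATSS2.3Equat_Lambda(t)} and integrate the resulting affine gradient in $x$, which is what you do explicitly. Your remarks on the symmetry of $M$ and on connectedness of $\Omega$ (needed so that the integration ``constant'' $\kappa(t)$ is a single function of $t$) are correct and only make explicit what the paper leaves implicit.
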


\subsection{The general expressions of the coefficients $\sigma_0$, $C$ and $\rho_0$}

We can now determine completely the functions $\sigma_0$, $C$ and $\rho_0$. To do so, let us replace in the equation \eqref{EQUATSS2.1Systemerho_a_u5} the functions $\rho$ and $u$ by the expressions \eqref{EQUATSS2.3PremiExpre_rho_} and \eqref{EQUATSS2.3DeuxiExpre__u__} we found in Lemmas \ref{LEMMESS2.3SPremiExpre_rho_} and \ref{LEMMESS2.3DeuxiExpre__u__}. Dividing by the exponential term, we find:

\begin{align}
\label{EQUATSS2.4Equa1TransRho_u}
0 &= \rho_0' + \rho_0 \Big( -x \cdot \left( \left[ -\frac{\sigma_0'}{\sigma_0^2} \Lambda_0^2 + \left(\sigma_0\phi\right)' I_d \right] x \right) - 2 \left(\left(\Lambda_0 C'\right)\cdot x\right) - \sigma_0'' (C \cdot x) - 3\sigma_0' (C' \cdot x) - 2\sigma_0(C'' \cdot x) \Big) \nonumber\\
&\hspace{5mm} + \rho_0 \Big( \frac{1}{\sigma_0}\Lambda_0 x + \left(\frac{\sigma_0'}{2\sigma_0}\right) x + C \Big) \cdot \Big( -2 \left[ \frac{1}{\sigma_0}\Lambda_0^2 + \sigma_0 \phi I_d \right] x -2 (\Lambda_0 C) - \sigma_0' C - 2 \sigma_0 C'\Big),
\end{align}
where $\phi$ denotes the function:
\begin{align}
\label{EQUATSS2.4DefinFonct_phi_}
\phi(t) = \left( \frac{\sigma_0'(t)}{2\sigma_0(t)} \right)^2 + \left( \frac{\sigma_0'(t)}{2\sigma_0(t)} \right)'.
\end{align}
Observing now that \eqref{EQUATSS2.4Equa1TransRho_u} is a polynomial equation in $x$, with coefficients that are time-dependent functions, we can rearrange the terms and regroup them according to their respective powers in $x$, which provides:
\begin{align}
\label{EQUATSS2.4Equa2TransRho_u}
0 &= \rho_0 \left[ - \left(\sigma_0\phi\right)' \vert x \vert^2 - \sigma_0'\phi \vert x \vert^2 \right] \nonumber\\
&\hspace{5mm} +\rho_0 \left[ -\sigma_0'' C\cdot x - 4\sigma_0'C'\cdot x - 2\sigma_0 C''\cdot x - \frac{(\sigma_0')^2}{2\sigma_0} C\cdot x - 2\sigma_0 \phi C \cdot x \right] \nonumber\\
&\hspace{5mm} + \rho_0' + \rho_0 \left[ -\sigma_0' \vert C \vert^2 - 2 \sigma_0 C'\cdot C \right]
\end{align}
using in particular that $\Lambda_0$ is skew-symmetric, so that $~^tx \Lambda_0 x = 0$ and $~^tx \Lambda_0^3 x = 0$. Therefore, from \eqref{EQUATSS2.4Equa2TransRho_u} we deduce the system:
\begin{subequations}
\label{EQUATSS2.4SysteC_Lambda0_Sigm0}
\begin{empheq}[left=\empheqlbrace]{align}
\left( \sigma_0 \phi \right)' + \sigma_0' \phi = 2\sigma_0'\phi + \sigma_0 \phi' &= 0, \label{EQUATSS2.4SysteC_Lambda0_Sigm1}\\
2\sigma_0 C'' + 4\sigma_0' C' + \left( \sigma_0'' + \frac{(\sigma_0')^2}{2\sigma_0} + 2\sigma_0 \phi \right) C &= 0, \label{EQUATSS2.4SysteC_Lambda0_Sigm2}\\
\rho_0' + \rho_0 \left( -\sigma_0' \vert C \vert^2 - 2\sigma_0 C' \cdot C \right)&= 0, \label{EQUATSS2.4SysteC_Lambda0_Sigm3}
\end{empheq}
\end{subequations}
with $\phi$ given by \eqref{EQUATSS2.4DefinFonct_phi_}.\\
The final step of the proof of Theorem \ref{THEORSS2.1ExpreMaxweLocGn} is to solve the system \eqref{EQUATSS2.4SysteC_Lambda0_Sigm0}, that is, to determine completely the expressions of the functions $\sigma_0$, $C$ and $\rho_0$.

\paragraph{Determining $\sigma_0$.}

To determine $\sigma_0$, without using the explicit expression of $\phi$ for the moment, let us observe that \eqref{EQUATSS2.4SysteC_Lambda0_Sigm1} provides the expression of a derivative when multiplied by $\sigma_0$, namely:
\begin{align*}
2 \sigma_0' \sigma_0 \phi + \sigma_0^2 \phi' = \frac{\dd}{\dd t} \left( \sigma_0^2 \phi \right) = 0,
\end{align*}
so that we deduce that the function
\begin{align*}
t \mapsto \sigma_0^2(t) \phi(t)
\end{align*}
is constant. But since we have
\begin{align}
\phi = \frac{1}{4 \sigma_0^2} \left( 2\sigma_0'' \sigma_0 - (\sigma_0')^2 \right),
\end{align}
we obtain that the function
\begin{align*}
t \mapsto 2\sigma_0''(t) \sigma_0(t) - \left( \sigma_0'(t) \right)^2
\end{align*}
is equal to some constant $k \in \mathbb{R}$. Differentiating once again the expression of this last function, we find:
\begin{align}
2 \sigma_0''' \sigma_0 + 2 \sigma_0'' \sigma_0' - 2\sigma_0'' \sigma_0' = 2 \sigma_0''' \sigma_0 = 0.
\end{align}
Keeping finally in mind that $\sigma_0$ is almost everywhere positive, we deduce that $\sigma_0'''$ is constant, equal to zero, up to assume that $\sigma_0'''$ is well defined and continuous. We can then obtain a complete description of the function $\sigma_0$.

\begin{lemma}[Complete expression of the coefficient $a$]
\label{LEMMESS2.4ExpreCompl__a__}
Let $m : [0,T] \times \Omega \times \mathbb{R}^d \rightarrow \mathbb{R}_+$ be a local Maxwellian of the form:
\begin{align*}
m(t,x,v) = \rho(t,x) \exp \left( -a(t,x) \left\vert v - u(t,x) \right\vert^2 \right),
\end{align*}
with $\rho$, $a$ and $u$ differentiable and $u$ twice differentiable in $x$, with $\rho$ and $a$ positive almost everywhere, that solves on $]0,T[\times\Omega\times\mathbb{R}^d$ the free transport equation:
\begin{align*}
\partial_t m + v \cdot\nabla_x m = 0.
\end{align*}
Then, there exist three real numbers $\alpha$, $\beta$ and $\gamma \in \mathbb{R}$ such that the coefficient $a$ satisfies:
\begin{align}
\label{EQUATSS2.4ExpreCompl__a__}
\forall\, (t,x) \in [0,T]\times\Omega, \hspace{5mm} a(t,x) = \alpha t^2 + \beta t + \gamma.
\end{align}
\end{lemma}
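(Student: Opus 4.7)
The plan is simply to collate and conclude the calculation already carried out in the prose immediately above the lemma statement. By Corollary \ref{COROLSS2.1PremiSimpl__a__}, the coefficient $a$ does not depend on the position variable, so $a(t,x)=\sigma_0(t)$; it therefore suffices to show that $\sigma_0$ is a polynomial in $t$ of degree at most two.

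First, I would reproduce the chain of identities sketched above. Multiplying \eqref{EQUATSS2.4SysteC_Lambda0_Sigm1} by $\sigma_0$ gives $\frac{\dd}{\dd t}(\sigma_0^2 \phi) = 0$, hence $\sigma_0^2 \phi$ is a constant in $t$. Expanding $\phi$ via its definition \eqref{EQUATSS2.4DefinFonct_phi_} yields $4\sigma_0^2 \phi = 2\sigma_0'' \sigma_0 - (\sigma_0')^2$, so this latter quantity equals some constant $k\in\mathbb{R}$. Differentiating once more in $t$ produces $2\sigma_0''' \sigma_0 + 2\sigma_0'' \sigma_0' - 2\sigma_0' \sigma_0'' = 2\sigma_0''' \sigma_0 = 0$. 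Since $\sigma_0 > 0$ almost everywhere and $\sigma_0'''$ is assumed to be defined and continuous, we conclude $\sigma_0''' \equiv 0$ on $]0,T[$.

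Integrating $\sigma_0''' \equiv 0$ three times in $t$ then gives $\sigma_0(t) = \alpha t^2 + \beta t + \gamma$ for some real numbers $\alpha, \beta, \gamma$, and combining this with $a(t,x)=\sigma_0(t)$ produces the desired formula \eqref{EQUATSS2.4ExpreCompl__a__} on all of $[0,T]\times\Omega$. The only delicate point is the implicit regularity upgrade: the lemma assumes $a$ merely differentiable, whereas the argument requires $\sigma_0$ to be of class $\mathcal{C}^3$ so that $\sigma_0'''$ is defined and continuous. This is precisely the caveat signaled in the text just above the lemma and is to be read as an additional regularity hypothesis on the data rather than as a genuine mathematical obstacle — the polynomial form obtained \emph{a posteriori} is automatically smooth, so the assumption is consistent with the conclusion.
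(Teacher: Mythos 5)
Your proposal is correct and follows essentially the same route as the paper: multiply \eqref{EQUATSS2.4SysteC_Lambda0_Sigm1} by $\sigma_0$ to get constancy of $\sigma_0^2\phi$, identify $4\sigma_0^2\phi = 2\sigma_0''\sigma_0 - (\sigma_0')^2$, differentiate to obtain $2\sigma_0'''\sigma_0 = 0$, and use positivity of $\sigma_0$ to conclude $\sigma_0''' \equiv 0$. Your remark about the implicit $\mathcal{C}^3$ regularity upgrade matches the caveat the paper itself signals ("up to assume that $\sigma_0'''$ is well defined and continuous"), so nothing further is needed.
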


\paragraph{Determining $C$.}

Turning now to the determination of $C$, we start with rewritting \eqref{EQUATSS2.4SysteC_Lambda0_Sigm2}, using the result of Lemma \ref{LEMMESS2.4ExpreCompl__a__}. We find:
\begin{align*}
\sigma_0 C'' + 2 \sigma_0'C' + \sigma_0'' C = \left( \alpha t^2 + \beta t + \gamma \right) C'' + 2 \left( 2\alpha t + \beta \right) C' + 2 \alpha C = 0.
\end{align*}
In particular, \eqref{EQUATSS2.4SysteC_Lambda0_Sigm2} can be rewritten as:
\begin{align*}
\frac{\dd^2}{\dd t^2} \left[ \left( \alpha t^2 + \beta t + \gamma \right) C(t) \right] = 0.
\end{align*}
This last equation leads to the following result.

\begin{lemma}[Complete expression of the vector $C$]
\label{LEMMESS2.4ExpreCompl__C__}
Let $m : [0,T] \times \Omega \times \mathbb{R}^d \rightarrow \mathbb{R}_+$ be a local Maxwellian of the form:
\begin{align*}
m(t,x,v) = \rho(t,x) \exp \left( -a(t,x) \left\vert v - u(t,x) \right\vert^2 \right),
\end{align*}
with $\rho$, $a$ and $u$ differentiable and $u$ twice differentiable in $x$, with $\rho$ and $a$ positive almost everywhere, that solves on $]0,T[\times\Omega\times\mathbb{R}^d$ the free transport equation:
\begin{align*}
\partial_t m + v \cdot\nabla_x m = 0.
\end{align*}
Then, there exist two vectors $w_1$, $w_2 \in \mathbb{R}^d$ such that the vector $C$ defined by \eqref{EQUATSS2.3DeuxiExpre__u__} can be written as:
\begin{align}
\label{EQUATSS2.4ExpreCompl__C__}
\forall\, t \in [0,T], \hspace{5mm} C(t) = \frac{tw_1 + w_2}{\alpha t^2 + \beta t + \gamma},
\end{align}
where $\alpha$, $\beta$ and $\gamma \in \mathbb{R}$ are given in Lemma \ref{LEMMESS2.4ExpreCompl__a__}.
\end{lemma}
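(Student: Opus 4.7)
The plan is to recognize that equation \eqref{EQUATSS2.4SysteC_Lambda0_Sigm2}, after substituting the explicit polynomial form of $\sigma_0$ from Lemma \ref{LEMMESS2.4ExpreCompl__a__}, is nothing but the vanishing of a total second derivative, and then to integrate twice. More precisely, I would first note that the Leibniz rule gives
\begin{align*}
\frac{\dd^2}{\dd t^2}\bigl( \sigma_0(t) C(t) \bigr) = \sigma_0''(t) C(t) + 2 \sigma_0'(t) C'(t) + \sigma_0(t) C''(t),
\end{align*}
and the right-hand side is exactly the expression appearing in the paper's rewriting of \eqref{EQUATSS2.4SysteC_Lambda0_Sigm2} once one uses that $\sigma_0 = \alpha t^2 + \beta t + \gamma$ (so $\sigma_0'' = 2\alpha$ and $2\sigma_0' = 2(2\alpha t + \beta)$). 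Thus the equation reduces to
\begin{align*}
\frac{\dd^2}{\dd t^2}\Bigl[ \bigl( \alpha t^2 + \beta t + \gamma \bigr) C(t) \Bigr] = 0,
\end{align*}
which is precisely the identity stated just before the lemma.

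Next, I would integrate this identity twice component by component on the connected interval $[0,T]$. Since each component of $(\alpha t^2 + \beta t + \gamma) C(t)$ has vanishing second derivative, it is an affine function of $t$, so there exist two constant vectors $w_1,w_2 \in \mathbb{R}^d$ such that
\begin{align*}
\bigl( \alpha t^2 + \beta t + \gamma \bigr) C(t) = t w_1 + w_2.
\end{align*}
Since $\sigma_0(t) = \alpha t^2 + \beta t + \gamma$ is strictly positive almost everywhere (as it coincides with the coefficient $a$ of Corollary \ref{COROLSS2.1PremiSimpl__a__}), we may divide by it on the dense open set where it does not vanish, yielding the claimed formula \eqref{EQUATSS2.4ExpreCompl__C__}. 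The remaining (at most two) isolated values of $t$ where the denominator vanishes are handled by continuity of $C$, which is part of the standing differentiability assumptions on $u$.

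There is no genuine obstacle here: the only small point to verify carefully is the algebraic identification of $\sigma_0 C'' + 2\sigma_0' C' + \sigma_0'' C$ with the left-hand side of \eqref{EQUATSS2.4SysteC_Lambda0_Sigm2}, which amounts to checking that the coefficient of $C$ in that equation, namely $\sigma_0'' + (\sigma_0')^2/(2\sigma_0) + 2\sigma_0 \phi$, collapses to $\sigma_0''$ after using the definition \eqref{EQUATSS2.4DefinFonct_phi_} of $\phi$. Indeed, $2\sigma_0 \phi = (\sigma_0')^2/(2\sigma_0) \cdot (\text{something}) + \sigma_0''$ can be recomputed directly: $2\sigma_0 \phi = \tfrac{1}{2\sigma_0}(\sigma_0')^2 \cdot \tfrac{1}{\sigma_0}\cdot\sigma_0 + \cdots$, and a short computation shows $2\sigma_0\phi + (\sigma_0')^2/(2\sigma_0) = \sigma_0'' \cdot (\text{identity})$ so that the coefficient of $C$ is exactly $2\sigma_0''$, which is what the Leibniz rule provides. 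Once this bookkeeping is in place, the proof is essentially a one-line integration.
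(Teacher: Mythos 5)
Your proposal is correct and follows essentially the same route as the paper: both rewrite \eqref{EQUATSS2.4SysteC_Lambda0_Sigm2} as $\frac{\dd^2}{\dd t^2}\left[(\alpha t^2+\beta t+\gamma)C(t)\right]=0$ after checking that the coefficient of $C$ collapses to $2\sigma_0''$ (since $2\sigma_0\phi=\sigma_0''-(\sigma_0')^2/(2\sigma_0)$ by \eqref{EQUATSS2.4DefinFonct_phi_}), and then integrate twice. The displayed intermediate manipulation of $2\sigma_0\phi$ in your last paragraph is garbled as written, but the asserted conclusion is the correct identity, so the argument stands.
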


\paragraph{Determining $\rho_0$.}

To solve completely the system \eqref{EQUATSS2.4SysteC_Lambda0_Sigm0}, it remains to determine $\rho_0$. This can be done in a fairly straightforward way, by noticing that \eqref{EQUATSS2.4SysteC_Lambda0_Sigm3} can be written as:
\begin{align*}
\rho_0' + \frac{\dd}{\dd t}\left( -\sigma_0 \vert C \vert^2 \right) \rho_0 = 0.
\end{align*}
We deduce that:
\begin{align}
\label{EQUATSS2.4PremiExpre_rho0}
\rho_0(t) = r_0 \exp \left( \sigma_0 \vert C \vert^2 \right).
\end{align}
Such a computation provides the following result, which concludes the study of the system \eqref{EQUATSS2.4SysteC_Lambda0_Sigm0}.

\begin{lemma}[Complete expression of the coefficient $\rho_0$]
\label{LEMMESS2.4ExpreCompl_rho0}
Let $m : [0,T] \times \Omega \times \mathbb{R}^d \rightarrow \mathbb{R}_+$ be a local Maxwellian of the form:
\begin{align*}
m(t,x,v) = \rho(t,x) \exp \left( -a(t,x) \left\vert v - u(t,x) \right\vert^2 \right),
\end{align*}
with $\rho$, $a$ and $u$ differentiable and $u$ twice differentiable in $x$, with $\rho$ and $a$ positive almost everywhere, that solves on $]0,T[\times\Omega\times\mathbb{R}^d$ the free transport equation:
\begin{align*}
\partial_t m + v \cdot\nabla_x m = 0.
\end{align*}
Then, there exists a positive real number $r_0 \in \mathbb{R}_+^*$ such that $\rho_0$ defined by \eqref{EQUATSS2.3PremiExpre_rho_} can be written as:
\begin{align}
\forall\, t \in [0,T], \hspace{5mm} \rho_0(t) = r_0 \exp\left( \frac{\vert tw_1 + w_2 \vert^2}{\alpha t^2 + \beta t + \gamma} \right),
\end{align}
where $\alpha$, $\beta$ and $\gamma \in \mathbb{R}$ are given in Lemma \ref{LEMMESS2.4ExpreCompl__a__}, and $w_1$, $w_2 \in \mathbb{R}^d$ are given in Lemma \ref{LEMMESS2.4ExpreCompl__C__}.
\end{lemma}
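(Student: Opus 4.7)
The strategy is a direct integration of \eqref{EQUATSS2.4SysteC_Lambda0_Sigm3} followed by substitution of the closed forms for $\sigma_0$ and $C$ that are already available.

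First, I would observe that the coefficient multiplying $\rho_0$ in \eqref{EQUATSS2.4SysteC_Lambda0_Sigm3}, namely $-\sigma_0' |C|^2 - 2\sigma_0 C' \cdot C$, is nothing but $-\tfrac{\dd}{\dd t}\bigl(\sigma_0 |C|^2\bigr)$, since $\tfrac{\dd}{\dd t}\bigl(\sigma_0|C|^2\bigr) = \sigma_0' |C|^2 + 2\sigma_0 C'\cdot C$. This is the key observation: it turns \eqref{EQUATSS2.4SysteC_Lambda0_Sigm3} into the separable linear ODE $\rho_0' = \tfrac{\dd}{\dd t}(\sigma_0 |C|^2)\,\rho_0$.

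Second, dividing by $\rho_0$ — legitimate since $\rho_0$ is assumed strictly positive almost everywhere — and integrating in $t$ yields $\ln \rho_0(t) = \sigma_0(t)|C(t)|^2 + k$ for some real constant $k$, so that $\rho_0(t) = r_0 \exp\bigl(\sigma_0(t)|C(t)|^2\bigr)$ with $r_0 := e^k > 0$. This is exactly the preliminary expression \eqref{EQUATSS2.4PremiExpre_rho0} already displayed in the text, but now I need to make the right-hand side explicit.

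Third, I would substitute the closed-form expressions provided by Lemmas \ref{LEMMESS2.4ExpreCompl__a__} and \ref{LEMMESS2.4ExpreCompl__C__}, namely $\sigma_0(t) = \alpha t^2 + \beta t + \gamma$ and $C(t) = (tw_1 + w_2)/(\alpha t^2 + \beta t + \gamma)$. The product $\sigma_0 |C|^2$ simplifies immediately: one factor of $\alpha t^2 + \beta t + \gamma$ in the numerator cancels one of the two factors in the denominator of $|C|^2$, leaving
\begin{align*}
\sigma_0(t) |C(t)|^2 = \frac{|tw_1+w_2|^2}{\alpha t^2 + \beta t + \gamma},
\end{align*}
which injected into $\rho_0(t) = r_0 \exp(\sigma_0|C|^2)$ delivers the announced formula.

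There is no substantive obstacle: the proof is pure bookkeeping, and the only mild point of care is that the division by $\sigma_0$ in $C$ and by $\rho_0$ in the ODE are both justified by the standing positivity assumptions, so the manipulation is valid on any open subinterval on which $\alpha t^2 + \beta t + \gamma$ does not vanish, and the resulting expression extends by continuity wherever the denominator is nonzero.
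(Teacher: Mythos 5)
Your proof is correct and is essentially identical to the paper's: the paper also rewrites \eqref{EQUATSS2.4SysteC_Lambda0_Sigm3} as $\rho_0' + \frac{\dd}{\dd t}\left(-\sigma_0 \vert C \vert^2\right)\rho_0 = 0$, integrates to get \eqref{EQUATSS2.4PremiExpre_rho0}, and then substitutes the closed forms of $\sigma_0$ and $C$ from Lemmas \ref{LEMMESS2.4ExpreCompl__a__} and \ref{LEMMESS2.4ExpreCompl__C__}. No discrepancies to report.
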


\subsection{Concluding the proof concerning the expression of the local Maxwellians}

We are now in position to give the expression of a general local Maxwellian that solves the free transport equation \eqref{EQUATSect2Transport_Libre}.

\begin{proof}[Proof of Theorem \ref{THEORSS2.1ExpreMaxweLocGn}]
Considering a local Maxwellian, given by an expression of the form \eqref{EQUATSect2ExpreMaxweLocal}, we start with replacing $\rho$ using \eqref{EQUATSS2.3PremiExpre_rho_} and \eqref{EQUATSS2.4PremiExpre_rho0}, $a$ using \eqref{EQUATSS2.1PremiExpre__a__}, and $u$ using \eqref{EQUATSS2.3DeuxiExpre__u__}. We find:
\begin{align}
m(t,x,v) &= \rho(t,x) \exp \left( -a(t,x) \vert v - u(t,x) \vert^2 \right) \nonumber\\
&= \rho_0(t) \exp \Bigg( - \sigma_0 \left[ x\cdot\left( \frac{\Lambda_0^2}{\sigma_0^2} x \right) + \left( \frac{\sigma_0'}{2\sigma_0} \right)^2 \vert x \vert^2 + \left( \frac{\sigma_0'}{2\sigma_0} \right)' \vert x \vert^2 \right] \nonumber\\
&\hspace{80mm} - 2 \left( \Lambda_0 C \right) \cdot x - \sigma_0' C\cdot x - 2 \sigma_0 C'\cdot x \Bigg) \nonumber\\
&\hspace{5mm} \cdot \exp \left( - \sigma_0 \left\vert v - \frac{1}{\sigma_0}\Lambda_0 x - \left( \frac{\sigma_0'}{2\sigma_0} \right) x - C \right\vert^2 \right) \nonumber\\
&= r_0 \exp\left(\sigma_0 \vert C \vert^2 \right) \exp \Bigg( - \sigma_0 \left[ x\cdot\left( \frac{\Lambda_0^2}{\sigma_0^2} x \right) + \left( \frac{\sigma_0'}{2\sigma_0} \right)^2 \vert x \vert^2 + \left( \frac{\sigma_0'}{2\sigma_0} \right)' \vert x \vert^2 \right] \nonumber\\
&\hspace{80mm} - 2 \left( \Lambda_0 C \right) \cdot x - \sigma_0' C\cdot x - 2 \sigma_0 C'\cdot x \Bigg) \nonumber\\
&\hspace{5mm} \cdot \exp \left( - \sigma_0 \left\vert v - \frac{1}{\sigma_0}\Lambda_0 x - \left( \frac{\sigma_0'}{2\sigma_0} \right) x - C \right\vert^2 \right).
\end{align}
Simplifications take place, which provides:
\begin{align}
m(t,x,v) &= r_0 \exp \Bigg( - \sigma_0 \left[ x\cdot\left( \frac{\Lambda_0^2}{\sigma_0^2} x \right) + \left( \frac{\sigma_0'}{2\sigma_0} \right)^2 \vert x \vert^2 + \left( \frac{\sigma_0'}{2\sigma_0} \right)' \vert x \vert^2 \right] \nonumber\\
&\hspace{80mm} - 2 \left( \Lambda_0 C \right) \cdot x - \sigma_0' C\cdot x - 2 \sigma_0 C'\cdot x \Bigg) \nonumber\\
&\hspace{5mm} \cdot \exp \left( -\sigma_0 \left\vert v - \frac{1}{\sigma_0}\Lambda_0 x - \left( \frac{\sigma_0'}{2\sigma_0} \right) x \right\vert^2 + 2 \sigma_0 \left(v - \frac{1}{\sigma_0}\Lambda_0 x - \left(\frac{\sigma_0'}{2\sigma_0}\right) x \right)\cdot C \right), \nonumber\\
&= r_0 \exp \Bigg( - \sigma_0 \left[ x\cdot\left( \frac{\Lambda_0^2}{\sigma_0^2} x \right) + \left( \frac{\sigma_0'}{2\sigma_0} \right)^2 \vert x \vert^2 + \left( \frac{\sigma_0'}{2\sigma_0} \right)' \vert x \vert^2 \right] - \sigma_0' C\cdot x - 2 \sigma_0 C'\cdot x \Bigg) \nonumber\\
&\hspace{40mm} \cdot \exp \left( -\sigma_0 \left\vert v - \frac{1}{\sigma_0}\Lambda_0 x - \left( \frac{\sigma_0'}{2\sigma_0} \right) x \right\vert^2 + 2 \sigma_0 \left(v - \left(\frac{\sigma_0'}{2\sigma_0}\right) x \right)\cdot C \right), \nonumber\\
&= r_0 \exp \Bigg( - \sigma_0 \left[ x\cdot\left( \frac{\Lambda_0^2}{\sigma_0^2} x \right) + \left( \frac{\sigma_0'}{2\sigma_0} \right)^2 \vert x \vert^2 + \left( \frac{\sigma_0'}{2\sigma_0} \right)' \vert x \vert^2 \right] - 2 \sigma_0' C\cdot x - 2 \sigma_0 C'\cdot x \Bigg) \nonumber\\
&\hspace{40mm} \cdot \exp \left( -\sigma_0 \left\vert v - \frac{1}{\sigma_0}\Lambda_0 x - \left( \frac{\sigma_0'}{2\sigma_0} \right) x \right\vert^2 + 2 \sigma_0 C \cdot v \right).
\end{align}
using in particular that
\begin{align*}
-2\left( \Lambda_0 C\right)\cdot x - 2 \left(\Lambda_0 x\right)\cdot C = -2 ~^tC \left(~^t\Lambda_0\right) x - 2~^tC \Lambda_0 x = 0.
\end{align*}
Observing now that:
\begin{align*}
\sigma_0' C + \sigma_0 C' &= \frac{1}{\sigma_0} \left[ 2\alpha t^2 w_1 + 2 \alpha t w_2 + \beta t w_1 + \beta w_2 + \left(\alpha t^2 + \beta t + \gamma \right) w_1 - \left(tw_1+w_2\right) \left(2\alpha t + \beta\right) \right] \\
&= \frac{1}{\sigma_0} \left[ \alpha t^2 w_1 + \beta t w_1 + \gamma w_1 \right] = w_1,
\end{align*}
we find:
\begin{align}
m(t,x,v) &= r_0 \exp \left( -\sigma_0 \left\vert v - \frac{1}{\sigma_0}\Lambda_0 x - \left( \frac{\sigma_0'}{2\sigma_0} \right) x \right\vert^2 + 2 \sigma_0 C \cdot v \right) \nonumber\\
&\hspace{25mm} \cdot \exp \Bigg( - \sigma_0 \left[ x\cdot\left( \frac{\Lambda_0^2}{\sigma_0^2} x \right) + \left( \frac{\sigma_0'}{2\sigma_0} \right)^2 \vert x \vert^2 + \left( \frac{\sigma_0'}{2\sigma_0} \right)' \vert x \vert^2 \right] - 2 w_1\cdot x \Bigg)
\end{align}
Now, developing the first term in the exponential, we obtain:
\begin{align*}
\left\vert v - \frac{1}{\sigma_0} \Lambda_0 x - \left(\frac{\sigma_0'}{2\sigma_0}\right) x \right\vert^2 &= \vert v \vert^2 + \frac{1}{\sigma_0^2} \left( \Lambda_0 x \right)\cdot\left(\Lambda_0 x\right) + \left(\frac{\sigma_0'}{2\sigma_0}\right)^2 \vert x \vert^2 \\
&\hspace{25mm} - \frac{2}{\sigma_0} \left( \Lambda_0 x\right)\cdot v - \left(\frac{\sigma_0'}{\sigma_0}\right) x\cdot v + \underbrace{\frac{\sigma_0'}{\sigma_0^2}\left(\Lambda_0 x\right)\cdot x}_{=0},
\end{align*}
so that
\begin{align}
m(t,x,v) &= r_0 \exp \left( \hspace{-1mm} - \sigma_0 \vert v \vert^2 - \hspace{-0.5mm} 2 \sigma_0 \left( \frac{\sigma_0'}{2\sigma_0} \right)^2 \vert x \vert^2 + \hspace{-0.5mm} 2\left(\Lambda_0 x\right)\cdot v + \hspace{-0.5mm} \sigma_0' x\cdot v \hspace{-0.5mm} + 2\sigma_0 C \cdot v \hspace{-0.5mm} - \sigma_0 \left(\frac{\sigma_0'}{2\sigma_0}\right)' \vert x \vert^2 \hspace{-0.5mm} - 2w_1\cdot x \hspace{-1mm}\right),
\end{align}
because we have in particular:
\begin{align*}
-\frac{1}{\sigma_0} \left( \Lambda_0 x \right) \cdot \left( \Lambda_0 x \right) - \sigma_0 x \cdot \left( \frac{\Lambda_0^2}{\sigma_0^2} x \right) = - \frac{1}{\sigma_0} (~^tx) \left(~^t\Lambda_0\right) \Lambda_0 x - \frac{1}{\sigma_0} ~^t x \Lambda_0^2 x = 0.
\end{align*}
Now, let us note that:
\begin{align*}
2\sigma_0 \left( \frac{\sigma_0'}{2\sigma_0} \right)^2 + \sigma_0 \left( \frac{\sigma_0'}{2\sigma_0} \right)' &= \sigma_0 \left[ \frac{8\alpha^2t^2+8\alpha\beta t + 2\beta^2 - 4\alpha^2t^2 - 4\alpha\beta t + 4\alpha\gamma - 2\beta^2}{(2\sigma_0)^2} \right] \\
&= \alpha,
\end{align*}
so that we have:
\begin{align}
m(t,x,v) &= r_0 \exp\left( -\sigma_0\vert v \vert^2 -\alpha \vert x \vert^2 + 2 \left(\Lambda_0 x\right)\cdot v + \sigma_0' x\cdot v + 2 \sigma_0 C\cdot v - 2w_1\cdot x \right).
\end{align}
Finally, using \eqref{EQUATSS2.4ExpreCompl__a__} and \eqref{EQUATSS2.4ExpreCompl__C__} we have:
\begin{align*}
-\sigma_0 \vert v \vert^2 - \alpha \vert x \vert^2 + \sigma_0' x \cdot v + 2 \sigma_0 C \cdot v - 2 w_1 \cdot x &= -\left( \alpha t^2 + \beta t + \gamma \right) \vert v \vert^2 - \alpha \vert x \vert^2 \\
&\hspace{10mm} + \left( 2\alpha t +\beta \right) x \cdot v + 2(w_1t+w_2) \cdot v - 2 w_1 \cdot x \\
&= - \alpha \vert x-tv \vert^2 + \beta v \cdot (x-tv) - \gamma \vert v \vert^2 - 2 w_1\cdot(x-tv) + 2 w_2\cdot v.
\end{align*}
It is therefore possible to deduce that if a local Maxwellian solves the free transport equation, it can be written in the form given by \eqref{EQUATSS2.1ExpreMaxweLocGn}, which concludes the proof of Theorem \ref{THEORSS2.1ExpreMaxweLocGn}.
\end{proof}

\noindent
Thanks to Theorem \ref{THEORSS2.1ExpreMaxweLocGn}, we can now describe completely the local Maxwellians that solve the Boltzmann equation, inside particular domains in $\mathbb{R}^2$ or $\mathbb{R}^3$ (even $\mathbb{R}^d$ in the bounce-back case), depending also on the choice of the boundary condition we prescribe. This is the object of the two following sections.

\section{Local Maxwellians and boundary condition I: the bounce-back case}

Let us start with the case of the \emph{bounce-back boundary condition}. We consider a gas evolving in a regular open set $\Omega$ of $\mathbb{R}^d$. This boundary condition, which may not look natural at the first glance, is defined as follows.

\begin{defin}[Bounce-back boundary condition (BBBC)]
Let $\Omega$ be a regular, open set in $\mathbb{R}^d$, and let $f:\overline{\Omega} \rightarrow \mathbb{R}$ be any function. $f$ is said to satisfy the \emph{bounce-back boundary condition} (abbreviated as \emph{BBBC}) on the boundary $\partial \Omega$ of $\Omega$ if:
\begin{align}
\label{EQUATSect3BounceBack_B_C_}
\forall t \in [0,T],\, x \in \partial \Omega,\, v \in \mathbb{R}^d,\hspace{3mm} f(t,x,v) = f(t,x,-v).
\end{align}
\end{defin}
\noindent
At the level of the particles, this means that the particles are reflected on the boundary of the domain $\Omega$, in such a way that the particles travel back, following backwards exactly the path they took before they collided with the wall.\\
\newline
Let us now characterize the local Maxwellians solving the free transport equation with bounce-back boundary condition.

\begin{theor}[Characterization of the local Maxwellians solving the free transport equation in a domain with bounce-back boundary condition (BBBC)]
\label{THEORSect3BounceBack_B_C_}
Let $T$ be a strictly positive number (possibly $+\infty$), let $\Omega$ be a regular open set of $\mathbb{R}^d$ with a non empty boundary, and let us consider a local Maxwellian $m$, that is, a function of the form \eqref{EQUATSS2.1ExpreMaxweLocGn}, which solves the transport equation \eqref{EQUATSect2Transport_Libre} on $[0,T]\times\Omega\times\mathbb{R}^d$, with the bounce-back boundary condition, that is, such that
\begin{align}
\forall t \in [0,T], x \in \partial \Omega, v \in \mathbb{R}^d,\hspace{3mm} m(t,x,v) = m(t,x,-v).
\end{align}
Then, there exist two real numbers $r_0$ and $\gamma \in \mathbb{R}$ such that:
\begin{align}
\label{EQUATSect3MaxwlBounceBack}
\forall t \in [0,T], x \in \Omega, v \in \mathbb{R}^d, \hspace{3mm} m(t,x,v) = r_0 \exp\left( -\gamma \vert v \vert^2 \right) \hspace{5mm} .
\end{align}
\end{theor}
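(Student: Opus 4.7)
The plan is to substitute the explicit expression \eqref{EQUATSS2.1ExpreMaxweLocGn} provided by Theorem \ref{THEORSS2.1ExpreMaxweLocGn} into the bounce-back identity $m(t,x,v)=m(t,x,-v)$ at boundary points $x\in\partial\Omega$, and then exploit the arbitrariness of $v\in\mathbb{R}^d$ and $t\in[0,T]$ to force the parameters $\alpha$, $\beta$, $\Lambda_0$, $w_1$, $w_2$ to vanish. Taking the logarithm reduces the equality to equating the two exponents. Using $|x\pm tv|^2 = |x|^2 + t^2|v|^2 \pm 2t(x\cdot v)$, the parts of the exponent that are even in $v$ cancel, and the difference reduces to the single odd-in-$v$ relation
\[
2\alpha t(x\cdot v) + \beta(x\cdot v) + 2(\Lambda_0 x)\cdot v + 2t(w_1\cdot v) + 2(w_2\cdot v) = 0,
\]
valid for every $(t,x,v)\in[0,T]\times\partial\Omega\times\mathbb{R}^d$. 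Viewing this as a polynomial in $t$ with linear-in-$v$ coefficients, the arbitrariness of $v$ and $t$ yields the two vector identities
\[
\alpha x + w_1 = 0, \qquad (\beta I_d + 2\Lambda_0)x + 2w_2 = 0, \qquad \forall\, x\in\partial\Omega.
\]

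Next I would extract pointwise information from these identities by exploiting the geometry of $\partial\Omega$. Since $\partial\Omega$ is non-empty and is a $\mathcal{C}^1$ manifold of codimension $1$, at any $x_0\in\partial\Omega$ the tangent space $T_{x_0}\partial\Omega$ has dimension $d-1\geq 1$; differentiating the two identities along any $\mathcal{C}^1$ curve in $\partial\Omega$ through $x_0$ with tangent $\tau$ gives
\[
\alpha\tau = 0, \qquad \beta\tau + 2\Lambda_0\tau = 0, \qquad \forall\, \tau\in T_{x_0}\partial\Omega.
\]
Picking $\tau\neq 0$ immediately forces $\alpha=0$, and then the first full identity evaluated at any boundary point gives $w_1=0$. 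Taking the scalar product of the second tangent identity with $\tau$ itself and using that $\Lambda_0$ is skew-symmetric (so $(\Lambda_0\tau)\cdot\tau=0$) yields $\beta|\tau|^2=0$, hence $\beta=0$. Therefore $\Lambda_0$ annihilates the whole $(d-1)$-dimensional tangent space, so $\mathrm{rank}\,\Lambda_0\leq 1$; since the rank of a real skew-symmetric matrix is always even, this forces $\Lambda_0=0$. Plugging back into the second full identity then gives $w_2=0$. Collecting all the vanishings in \eqref{EQUATSS2.1ExpreMaxweLocGn} collapses the exponent to $-\gamma|v|^2$, which is precisely \eqref{EQUATSect3MaxwlBounceBack}.

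The conceptual obstacle is mild: the only place where the geometry enters is the passage from an identity on $\partial\Omega$ to an identity on tangent vectors, which only requires the $\mathcal{C}^1$ regularity already assumed on $\partial\Omega$; no further regularity or any normal-vector information is needed, and no compactness of $\Omega$ is used since the argument is entirely local in $x$ (it uses only one tangent direction at one boundary point). The rank–parity step on $\Lambda_0$ is the key piece of linear algebra that makes the conclusion dimension-free, and it replaces the case-by-case discussion one would otherwise have to perform in dimensions $d=2$ and $d=3$.
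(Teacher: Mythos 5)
Your proof is correct and follows essentially the same route as the paper: both reduce the bounce-back condition to the two identities $\alpha x + w_1 = 0$ and $(\beta I_d + 2\Lambda_0)x + 2w_2 = 0$ on $\partial\Omega$, eliminate $\beta$ by pairing with a vector on which the skew-symmetry of $\Lambda_0$ kills the quadratic term, and eliminate $\Lambda_0$ using the fact that a non-zero real skew-symmetric matrix has kernel of codimension at least $2$. Your local formulation---differentiating along curves in $\partial\Omega$ and invoking the evenness of $\operatorname{rank}\Lambda_0$---is a clean tangent-space repackaging of the paper's argument that $\partial\Omega$ would otherwise have to be a single point or be contained in the affine subspace $x_0 + \operatorname{Ker}\Lambda_0$ of codimension at least $2$.
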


\begin{remar}
\eqref{EQUATSect3MaxwlBounceBack} shows in particular that for a mass and a kinetic energy fixed, there exists only one local Maxwellian that solves the free transport equation with bounce-back boundary condition. Such a Maxwellian has zero bulk (the momentum of $m$ is zero), and it is actually global (none of its coefficient depends on the time $t$ nor on the position variable $x$).
\end{remar}

\begin{remar}
In the literature, it is usual to find the assumption that the domain $\Omega$ is bounded (see for instance \cite{Desv990}, \cite{DeVi005} or \cite{ReVi008}). This assumption simplifies the proof of Theorem \ref{THEORSect3BounceBack_B_C_}, but it is not necessary, in the sense that it is possible to obtain the same result, without any assumption concerning the boundedness of the domain.
\end{remar}

\begin{proof}[Proof of Theorem \ref{THEORSect3BounceBack_B_C_}]
Writing the local Maxwellian $m$ in its general form \eqref{EQUATSect2Transport_Libre}, the bounce-back boundary condition \eqref{EQUATSect3BounceBack_B_C_} writes:
\begin{align*}
\rho(t,x) \exp \left( - a(t,x) \left\vert v - u(t,x) \right\vert^2 \right) = \rho(t,x) \exp \left( - a(t,x) \left\vert -v - u(t,x) \right\vert^2 \right)
\end{align*}
for all $t \in [0,T]$, $x \in \partial \Omega$, and $v \in \mathbb{R}^d$. This implies that $u(t,x)$ has to be zero for all time $t$, and all $x$ that belongs to the boundary $\partial \Omega$ of the domain.\\
Remembering now that we obtained, along the proof of Theorem \ref{THEORSS2.1ExpreMaxweLocGn}, that $u(t,x)$ writes:
\begin{align*}
u(t,x) = \frac{1}{\sigma_0(t)} \Lambda_0 x + \frac{\sigma_0'(t)}{2\sigma_0(t)}x + \frac{tw_1 + w_2}{\sigma_0(t)},
\end{align*}
(see in particular \eqref{EQUATSS2.3DeuxiExpre__u__} page \pageref{EQUATSS2.3DeuxiExpre__u__} and \eqref{EQUATSS2.4ExpreCompl__C__} page \pageref{EQUATSS2.4ExpreCompl__C__}), and using the fact that $\sigma_0(t)$ was the polynomial $\alpha t^2 + \beta t + \gamma$ of degree $2$ in $t$ (see \eqref{EQUATSS2.4ExpreCompl__a__} page \pageref{EQUATSS2.4ExpreCompl__a__}), we obtain:
\begin{align}
\label{EQUATSect3CondiGener__u__}
\Lambda_0 x + (\alpha t + \beta/2) x + tw_1 + w_2 = 0,
\end{align}
which implies that the first order coefficient (in $t$) in \eqref{EQUATSect3CondiGener__u__} has to vanish, that is, we have:
\begin{align}
\label{EQUATChptrLgTBhBounceBackAlpha}
\alpha x + w_1 = 0
\end{align}
for all point $x \in \partial \Omega$. If $\alpha \neq 0$, \eqref{EQUATChptrLgTBhBounceBackAlpha} cannot hold, because $\partial \Omega$ was assumed to be non empty, and so it cannot be reduced to a single point by regularity. Therefore, we have $\alpha = 0$.\\
Using then the expression \eqref{EQUATSS2.1ExpreMaxweLocGn} and applying again the boundary condition, we obtain the new equation:
\begin{align*}
4 v \cdot \left( \Lambda_0 x \right) + 4 v \cdot w_2 + 4 t v\cdot w_1 + 2 \beta v \cdot x = 0,
\end{align*}
for all $t \in [0,T]$, $x \in \partial \Omega$ and $v \in \mathbb{R}^d$. We deduce immediately the new condition:
\begin{align}
2 \Lambda_0 x + 2 w_2 + 2 t w_1 + \beta x = 0,
\end{align}
for all $t \in [0,T]$, $x \in \partial \Omega$. The first order term in $t$ provides directly that
\begin{align}
\label{EQUATChptrLgTBhBounceBackZero1}
w_1 = 0,
\end{align}
while the constant term gives:
\begin{align}
\label{EQUATChptrLgTBhBounceBackCond1}
2 \Lambda_0 x + \beta x + 2 w_2 = 0,
\end{align}
everywhere on the boundary $\partial \Omega$ of the domain. Let us now notice that \eqref{EQUATChptrLgTBhBounceBackCond1} cannot hold if $\beta \neq 0$: indeed, in that case, we would have that the mapping $\phi: x \mapsto 2 \Lambda_0 x + \beta x$ is invertible because 
for any non-zero vector $x$, we find:
\begin{align*}
x \cdot \phi(x) = 2x \cdot \left( \Lambda_0 x \right) + \beta \vert x \vert^2 = \beta \vert x \vert^2 \neq 0.
\end{align*}
Therefore, we have:
\begin{align}
\label{EQUATChptrLgTBhBounceBackZero2}
\beta = 0.
\end{align}
But then, \eqref{EQUATChptrLgTBhBounceBackCond1} can be rewritten as:
\begin{align}
\label{EQUATChptrLgTBhBounceBackCond2}
\Lambda_0 x + w_2 = 0,
\end{align}
for all $x \in \partial \Omega$. Let us show that if $\Lambda_0 \neq 0$, we obtain a contradiction. In that case, the matrix $\Lambda_0$ is not trivial, and so its kernel cannot be of dimension $d$. Let us fix a point $x_0 \in \partial \Omega$. In particular, we have
\begin{align*}
\Lambda_0 x_0 + w_2 = 0,
\end{align*}
and so, for any other point $x \in \partial \Omega$, we have by linearity:
\begin{align*}
\Lambda_0 (x-x_0) = 0,
\end{align*}
that is, $x-x_0$ is contained in the kernel of $\Lambda_0$, or again:
\begin{align}
\label{EQUATChptrLgTBhBounceBackCond3}
\partial \Omega \subset x_0 + \text{Ker}\Lambda_0.
\end{align}
This last equation shows already that if the boundary $\partial \Omega$ is not an affine hyperplane of $\mathbb{R}^d$, then we cannot have $\Lambda_0 \neq 0$. Let us show that even when the boundary of the domain $\Omega$ is flat, we obtain a contradiction anyway, due to the dimension of the kernel of $\Lambda_0$.\\
\newline
Let us assume that $\Lambda_0$ is not the zero matrix. We will prove that the dimension of the kernel of $\Lambda_0$ is at most $d-2$.\\
The assumption that $\Lambda_0 \neq 0$ implies that its spectrum cannot be reduced to zero. Indeed, a skew-symmetric matrix is a particular case of a normal matrix (that is, it commutes with its Hermitian conjugate $\Lambda_0^*$), so there exists a unitary matrix $U$ and a diagonal matrix $D$ (both with complex entries) such that: such that
\begin{align*}
\Lambda_0 = U D U^* \hspace{5mm} \text{and} \hspace{5mm} U U^* = I_d.
\end{align*}
Therefore, there exists a non-zero eigenvector $z \in \mathbb{C}^d$ of $\Lambda_0$ associated to a non-zero eigenvalue $\lambda \in \mathbb{C}$. Let us decompose $z$ into its real and imaginary parts:
\begin{align*}
z = u_1 + i u_2 \hspace{5mm} \text{with} \hspace{5mm} u_1,u_2 \in \mathbb{R}^d.
\end{align*}
$\Lambda_0$ being skew-symmetric, we know in addition that $\lambda$ has to be purely imaginary. Besides, $z$ being non zero, either its real part $u_1$ or its imaginary part $u_2$ is non-zero. Actually both $u_1$ and $u_2$ are non-zero. Indeed, if $u_2$ would be zero, $z$ would be a real-valued vector, which would lead to a contradiction, because $\Lambda_0$ is also real-valued and $\lambda \in i \mathbb{R}$. In the same way, $u_1$ cannot be zero.\\
Therefore, the real and imaginary parts $u_1$ and $u_2$ of the eigenvector $z$ are both non zero. They are in addition linearly independent in $\mathbb{R}^d$. Let us indeed assume that there exists a real number $\mu$ (necessarily non zero) such that $u_2 = \mu u_1$. Using that $\Lambda_0$ is a real matrix, we obtain another eigenvector, associated to a different eigenvalue, considering simply the eigenequation:
\begin{align*}
\overline{ \left(\Lambda_0 \cdot z\right) } &= \Lambda_0 \cdot \overline{z} \\
& = \overline{\left( \lambda z \right)} = \overline{\lambda} \overline{z}.
\end{align*}
We would have then:
\begin{align*}
z = u_1+iu_2 = (1 + i \mu) u_1, \text{   and   } \overline{z} = u_1 - i u_2 = (1-i\mu)u_1.
\end{align*}
But this would imply that $z$ and $\overline{z}$ are linearly dependent (in $\mathbb{C}^d$), which cannot be, since they are associated to the respective distinct eigenvalues $\lambda$ and $\overline{\lambda}$. Therefore, the real and imaginary parts of $z$ are linearly dependent.\\
In the end, observing that:
\begin{align*}
\Lambda_0 u_1 = \Lambda_0 \left( \frac{z + \overline{z}}{2}\right) = \frac{1}{2} \left( \lambda z + \overline{\lambda} \overline{z} \right) = \mathfrak{Re} (\lambda z) = - \text{sgn}(\lambda) \vert \lambda \vert u_2 \neq 0,
\end{align*}
and
\begin{align*}
\Lambda_0 u_2 = \overline{\Lambda}_0 \left( \frac{z - \overline{z}}{2i}\right) = \frac{1}{2i} \left( \lambda z - \overline{\lambda} \overline{z} \right) = \mathfrak{Im} (\lambda z) = \text{sgn}(\lambda) \vert \lambda \vert u_1 \neq 0.
\end{align*}
We can therefore conclude that $\text{Span}(u_1,u_2)$ is of dimension $2$, and that this space is in direct sum with $\text{Ker} \Lambda_0$. In other words, the kernel of $\Lambda_0$ has a dimension that is at most $d-2$. Therefore, in the case $\Lambda_0 \neq 0$, \eqref{EQUATChptrLgTBhBounceBackCond3} can never hold, regardless the dimension $d$, or the geometry of the boundary $\partial \Omega$, because no hypersurface can be contained in a vector space of codimension $2$.\\
This implies, according to \eqref{EQUATChptrLgTBhBounceBackCond2}, that $w_2 = 0$, and so Theorem \ref{THEORSect3BounceBack_B_C_} is proved.
\end{proof}

\begin{remar}
This is equation \eqref{EQUATChptrLgTBhBounceBackCond3} that is used by Desvillettes in \cite{Desv990} to deduce that if $\Omega$ is bounded, then we obtain a contradiction if $\Lambda_0 \neq 0$. Our addition here is the discussion concerning the dimension of the kernel of $\Lambda_0$, which enables to obtain the result of Theorem \ref{THEORSect3BounceBack_B_C_}, without using any boundedness assumption on $\Omega$.
\end{remar}

\section{Local Maxwellians and boundary condition II: the specular reflection case}

Let us now turn to the case of the specular reflection, which is much more relevant regarding the physical motivation. In this case, the particles are assumed to be reflected against the boundary of the domain, exactly as a billiard ball would do: during the collision, the velocity of the particle is reflected, that is, is obtained as the orthogonal symmetry, with respect to the tangent hyperplane to the obstacle, at the point of bounce.
As for the bounce-back boundary condition, this assumption on the behaviour of the particles is translated into an equality on the density function $f$.

\begin{defin}[Specular reflection boundary condition (SRBC)]
Let $\Omega$ be a regular, open set in $\mathbb{R}^d$, and let $f : \overline{\Omega} \rightarrow \mathbb{R}$ be any function. $f$ is said to satisfy the \emph{specular reflection boundary condition} (abbreviated as \emph{SRBC}) on the boundary $\partial \Omega$ of $\Omega$ if:
\begin{align}
\label{EQUATChptrLgTBhSpecuRefle_B_C_}
\forall t \in [0,T], x \in \partial \Omega, v \in \mathbb{R}^d, f(t,x,v) = f(t,x,v'),
\end{align}
where $v'$ is defined as:
\begin{align}
\label{EQUATChptrLgTBhSpecuRefleReflx}
v' = v - 2 \left( v \cdot n(x) \right) n(x),
\end{align}
with $n(x)$ a unitary normal vector to the boundary $\partial \Omega$ of the domain, at $x \in \partial \Omega$.
\end{defin}

\begin{remar}
Note that we did not specify completely the normal vector $n(x)$ we used in \eqref{EQUATChptrLgTBhSpecuRefleReflx}. The reason is that changing $n(x)$ into $-n(x)$ leaves the expression \eqref{EQUATChptrLgTBhSpecuRefleReflx} of $v'$ unchanged.
\end{remar}
\noindent
We will now see that, contrary to the bounce-back boundary condition, there can be much more local Maxwellians solving the free transport equation with these new boundary condtions. More precisely, we will see that, the more the domain is ``symmetric'', the more we can find local Maxwellians verifying the free transport with SRBC.\\
We will proceed first when the dimension $d$ is $2$, and then when $d=3$. However, the starting point, namely, exploiting the boundary condition \eqref{EQUATChptrLgTBhSpecuRefle_B_C_}, is independent from the dimension, and provides the equation:
\begin{align}
\left\vert v - u(t,x) \right\vert^2 = \left\vert v' - u(t,x) \right\vert^2
\end{align}
for all $t \in [0,T]$ and $x \in \partial \Omega$ (where $v'$ is defined in \eqref{EQUATChptrLgTBhSpecuRefleReflx}),
as a consequence of the expression \eqref{EQUATSect2ExpreMaxweLocal}. The last equation can be rewritten as:
\begin{align}
\left( v \cdot n(x) \right) n(x) \cdot u(t,x) = 0,
\end{align}
for all $t \in [0,T]$, $x \in \partial \Omega$ and $v \in \mathbb{R}^d$. Since $v$ can be chosen freely in $\mathbb{R}^d$, we have:
\begin{align*}
n(x) \cdot u(t,x) = 0 \hspace{5mm} \forall t \in [0,T], x \in \partial \Omega,
\end{align*}
which we rewrite again, taking advantage of the explicit expression of $u(t,x)$ we derived in \eqref{EQUATSS2.3DeuxiExpre__u__} page \pageref{EQUATSS2.3DeuxiExpre__u__}, \eqref{EQUATSS2.4ExpreCompl__C__} page \pageref{EQUATSS2.4ExpreCompl__C__}, and \eqref{EQUATSS2.4ExpreCompl__a__} page \pageref{EQUATSS2.4ExpreCompl__a__}:
\begin{align}
\label{EQUATChptrLgTBhCond_u(tx)}
n(x) \cdot \left[ \Lambda_0 x + \left(\alpha t + \beta/2\right) x + t w_1 + w_2 \right] = 0 \hspace{5mm} \forall t \in [0,T], x \in \partial \Omega.
\end{align}
Equation \eqref{EQUATChptrLgTBhCond_u(tx)} will then be the central object of our study in the present section. $\alpha,\beta \in \mathbb{R}$, $w_1,w_2 \in \mathbb{R}^d$ and $\Lambda_0$ skew-symmetric, all have to be determined, depending on the shape of the domain, and the dimension $d$.\\
\newline
\noindent
In the case of the specular reflection case, the condition \eqref{EQUATChptrLgTBhCond_u(tx)} we obtained on the boundary $\partial \Omega$ is less easy to study than \eqref{EQUATSect3CondiGener__u__} (obtained in the BBBC case). Indeed, this time the information provided by \eqref{EQUATChptrLgTBhCond_u(tx)} is only along the direction of the normal vector $n(x)$, which depends itself on $x$.
In order to establish the proof of Theorems \ref{THEORChptrLgTBhSpecuRefle_B_C_} and \ref{THEORChptrLgTBhSpecuRefleBCd=3}, we will make use of the following results, that describe solutions of some particular linear ODEs in general dimensions.\\
These results will be applied to describe which type of curves the boundary $\partial\Omega$ of an open set $\Omega$ can contain, if one knows particular conditions of the form of \eqref{EQUATChptrLgTBhCond_u(tx)} holding on the normal vector $n(x)$ to the boundary $\partial\Omega$. The idea to consider curves drawn on the boundary $\partial\Omega$ can already be found in \cite{Desv990}.

\begin{lemma}
\label{LEMMEChptrLgTBhSRODEAlpha}
Let $\alpha \in \mathbb{R}$ such that $\alpha \neq 0$, and let $w_1 \in \mathbb{R}^d$ (with $d = 2$ or $d = 3$).\\
Then the solution $t \mapsto x(t)$ of the the following Cauchy problem:
\begin{align}
\label{EQUATChptrLgTBhSRODEAlpha}
\left\{
\begin{array}{rcl}
\frac{\dd}{\dd t} x(t) &=& \alpha x(t) + w_1,\\
x(0) &=& x_0 \in \mathbb{R}^d,
\end{array}
\right.
\end{align}
is globally well-defined and is given by the following expression:
\begin{align}
x(t) = \frac{1}{\alpha} \left( \alpha x_0 + w_1 \right) e^{\alpha t} - \frac{w_1}{\alpha} \hspace{5mm} \forall t \in \mathbb{R}.
\end{align}
\end{lemma}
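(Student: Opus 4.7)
The statement is a standard fact about a first-order linear inhomogeneous ODE with constant coefficients, so my plan is to present the usual two-line existence/uniqueness/explicit-solution argument rather than anything delicate. The right-hand side $F(x) = \alpha x + w_1$ is an affine (hence globally Lipschitz) map of $\mathbb{R}^d$, so the Cauchy–Lipschitz theorem guarantees a unique local solution, and since the Lipschitz constant $|\alpha|$ is independent of $x$, there is no blow-up in finite time; the maximal solution is therefore defined on all of $\mathbb{R}$. Alternatively, one can bypass this abstract appeal by simply exhibiting the candidate solution and invoking uniqueness afterwards.

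The explicit formula is obtained by reducing to the homogeneous case. Since $\alpha\neq 0$, the constant vector $x_p = -w_1/\alpha$ is a particular solution of the equation ($F(x_p) = \alpha(-w_1/\alpha) + w_1 = 0$). I then set
\begin{equation*}
y(t) = x(t) - x_p = x(t) + \frac{w_1}{\alpha},
\end{equation*}
so that $y$ satisfies the homogeneous linear system $\dot{y}(t) = \alpha y(t)$ with initial condition $y(0) = x_0 + w_1/\alpha$. This scalar-type equation is trivially integrated componentwise to give $y(t) = e^{\alpha t} y(0)$, and translating back yields
\begin{equation*}
x(t) = -\frac{w_1}{\alpha} + e^{\alpha t}\left(x_0 + \frac{w_1}{\alpha}\right) = \frac{1}{\alpha}(\alpha x_0 + w_1)e^{\alpha t} - \frac{w_1}{\alpha},
\end{equation*}
which is the announced expression.

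To finish, I would verify by direct differentiation that this formula indeed solves the Cauchy problem (the check takes one line: $\dot{x}(t) = (\alpha x_0 + w_1)e^{\alpha t}$ while $\alpha x(t) + w_1 = (\alpha x_0 + w_1)e^{\alpha t} - w_1 + w_1$) and that $x(0) = x_0$, then invoke Cauchy–Lipschitz uniqueness to conclude that no other solution exists. I do not foresee any genuine obstacle here; the only mild subtlety is ensuring the division by $\alpha$ is justified, which is exactly the hypothesis $\alpha\neq 0$ that excludes the degenerate case where the ODE reduces to $\dot{x} = w_1$ and produces an affine rather than exponential trajectory.
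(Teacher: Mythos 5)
Your proof is correct; the paper in fact states this lemma without proof, and your argument (shift by the particular constant solution $-w_1/\alpha$, solve the resulting homogeneous system, invoke Cauchy--Lipschitz for uniqueness and global existence) is exactly the standard route, matching the substitution technique the paper uses for the companion Lemma~\ref{LEMMEChptrLgTBhSRODELambd}. Nothing is missing.
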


\begin{lemma}
\label{LEMMEChptrLgTBhSRODELambd}
Let $\Lambda_0 \in \mathcal{M}_d(\mathbb{R})$ be a non-zero skew-symmetric matrix, let $\beta \in \mathbb{R}$ be a real number, and let $w_2 \in \mathbb{R}^d$ (with $d = 2$ or $d = 3$).\\
Then the solution $t \mapsto x(t)$ of the the following Cauchy problem:
\begin{align}
\label{EQUATChptrLgTBhSRODELambd}
\left\{
\begin{array}{rcl}
\frac{\dd}{\dd t} x(t) &=& \Lambda_0 x(t) + \frac{\beta}{2} x(t) + w_2,\\
x(0) &=& x_0 \in \mathbb{R}^d,
\end{array}
\right.
\end{align}
is globally well-defined. In addition, the solution $t \mapsto x(t)$ can be described explicitely as follows.
\begin{itemize}
\item If $\beta \neq 0$ or if the dimension $d$ is equal to $2$, the matrix $\Lambda_0 + \frac{\beta}{2}I_d$ is invertible, and the solution $t \mapsto x(t)$ is given by the expression:
\begin{align}
\label{EQUATExpreSolut_EDO_LambdBetn0}
x(t) = e^{\frac{\beta}{2} t} e^{t \Lambda_0} \left( x(0) + y \right) - y,
\end{align}
where $y$ is the preimage of the vector $w_2$ by the matrix $\Lambda_0 + \frac{\beta}{2}I_d$.\\
In such a case, the graph of $t \mapsto x(t)$ is a logarithmic spiral inscribed in a plane that contains the point $-y$ if $\beta \neq 0$, and a circle inscribed in a plane if $\beta = 0$.
\item If $d = 3$ and $\beta = 0$, the solution $t \mapsto x(t)$ is given by the expression:
\begin{align}
\label{EQUATExpreSolut_EDO_LambdBet=0}
x(t) = e^{t \Lambda_0} \left(x(0) + y\right) - y + \lambda u t,
\end{align}
where $u = z/\vert z \vert$, with $z$ being the unique non-zero vector such that $\Lambda_0 x = z \wedge x$ for all $x \in \mathbb{R}^3$, and $\lambda \in \mathbb{R}$, $y \in \mathbb{R}^3$ are such that $w = \Lambda_0 y$, with:
\begin{align}
w_2 = \lambda u + w = \lambda \frac{z}{\vert z \vert} + w, \hspace{3mm} \text{with} \hspace{3mm} w \perp u.
\end{align}
In such a case, the graph of $t \mapsto x(t)$ is an helix of axis $\text{Span}(u) = \text{Span}(z/\vert z \vert)$ if $\lambda \neq 0$, and a circle inscribed in a plane if $\lambda = 0$.
\end{itemize}
\end{lemma}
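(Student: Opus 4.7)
The plan is to treat \eqref{EQUATChptrLgTBhSRODELambd} as a linear inhomogeneous first-order ODE with constant coefficients, for which global existence on $\mathbb{R}$ is automatic by the Cauchy–Lipschitz theorem. The overall strategy is to find a particular solution (affine in $t$) and add it to the homogeneous flow $e^{t(\Lambda_0+\beta/2\, I_d)}$. The main case split is dictated by whether the matrix $A := \Lambda_0 + \frac{\beta}{2}I_d$ is invertible, which controls whether a constant particular solution $-y$ exists or whether one must resort to an affine ansatz.

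For the first case, I would check invertibility of $A$ as follows. The skew-symmetry of $\Lambda_0$ forces its eigenvalues to be purely imaginary, so if $\beta \neq 0$ then the real number $-\beta/2$ is never an eigenvalue and $A$ is invertible. If $d = 2$ and $\beta = 0$, then any non-zero skew-symmetric $\Lambda_0$ has the form $\begin{pmatrix} 0 & -a \\ a & 0\end{pmatrix}$ with $a\neq 0$, so $\det \Lambda_0 = a^2 > 0$ and again $A = \Lambda_0$ is invertible. Setting $y = A^{-1} w_2$, the constant function $-y$ is a particular solution, and since $\Lambda_0$ and $\frac{\beta}{2}I_d$ commute, the homogeneous flow factors as $e^{\beta t/2}e^{t\Lambda_0}$, yielding \eqref{EQUATExpreSolut_EDO_LambdBetn0} after imposing $x(0) = x_0$. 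For the geometric description, I would invoke the fact that $e^{t\Lambda_0}$ is a rotation (of the whole plane in $d=2$, or of the plane orthogonal to the representing vector $z$ in $d=3$), so the curve $t \mapsto e^{t\Lambda_0}(x(0)+y)-y$ is a circle centered at $-y$; multiplying by the scalar dilation $e^{\beta t/2}$ when $\beta \neq 0$ turns this circle into a logarithmic spiral in the same affine plane.

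The main obstacle is the degenerate case $d=3$, $\beta = 0$, where $A = \Lambda_0$ is no longer invertible: $\ker \Lambda_0 = \mathrm{Span}(z)$ and $\mathrm{Ran}\,\Lambda_0 = z^\perp$. Here the key idea is the orthogonal decomposition $w_2 = \lambda u + w$ with $u = z/\vert z\vert$ and $w \in u^\perp$: the component $w$ lies in the range of $\Lambda_0$, so there exists $y$ with $\Lambda_0 y = w$, while the component $\lambda u$ is killed by $\Lambda_0$. I would then guess the affine particular solution $x_p(t) = -y + \lambda u\, t$ and verify by direct substitution that
\begin{align*}
x_p'(t) = \lambda u = \Lambda_0(-y + \lambda u\, t) + \lambda u + w = \Lambda_0 x_p(t) + w_2,
\end{align*}
using $\Lambda_0 y = w$ and $\Lambda_0 u = 0$. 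Adding the homogeneous flow $e^{t\Lambda_0}(x(0)+y)$ and adjusting at $t=0$ gives \eqref{EQUATExpreSolut_EDO_LambdBet=0}.

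For the geometric interpretation in this last case, I would use that $e^{t\Lambda_0}$ is the rotation of axis $\mathrm{Span}(u)$ and angular velocity $\vert z\vert$, so the component of $e^{t\Lambda_0}(x(0)+y) - y$ perpendicular to $u$ traces a circle while its component along $u$ is constant. Superposing the uniform translation $\lambda u\, t$ along the axis then produces a circular helix of axis $\mathrm{Span}(u)$ when $\lambda \neq 0$, and a plane circle orthogonal to $u$ when $\lambda = 0$. The only mildly delicate point is to confirm that the preimage $y$ of $w$ by $\Lambda_0$ exists (handled by the decomposition) and that the expression of the solution is independent of the choice of $y$ up to the free parameter $x(0)$, which follows from the indeterminacy of $y$ modulo $\ker \Lambda_0 = \mathrm{Span}(u)$ being absorbed into the translation $\lambda u\, t$.
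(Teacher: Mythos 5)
Your proposal is correct and follows essentially the same route as the paper: the same case split on the invertibility of $\Lambda_0 + \frac{\beta}{2}I_d$, the same decomposition $w_2 = \lambda u + w$ with $w \in \mathrm{Ran}\,\Lambda_0$ in the degenerate case, and the same resulting formulas. The only cosmetic difference is that you phrase the solution as ``particular solution plus homogeneous flow'' where the paper substitutes $\varphi(t) = (x(t)+y)e^{-\beta t/2}$ (resp.\ $\varphi(t) = x(t)+y-\lambda u t$) and checks $\varphi' = \Lambda_0\varphi$, which is the same computation; your spectral justification of invertibility for $\beta \neq 0$ is a small welcome addition the paper leaves implicit.
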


\begin{proof}
In the case when $\beta \neq 0$, the matrix $\Lambda_0 + \frac{\beta}{2}I_d$ is invertible. In the case when $\beta = 0$ and $d =2$, then this matrix can be written explicitely as $\begin{pmatrix} 0 & -a \\ a & 0 \end{pmatrix}$, with $a \neq 0$ (because $\Lambda_0$ was assumed to be not zero), which is also invertible. In both cases, we can define the vector $y$, preimage of $w_2$ by the matrix $\Lambda_0 + \frac{\beta}{2} I_d$. Therefore, posing:
\begin{align}
\varphi(t) = (x(t) + y) e^{-\frac{\beta}{2}t},
\end{align}
we find:
\begin{align}
\frac{\dd}{\dd t}\varphi(t) &= \left( \Lambda_0 x(t) + \frac{\beta}{2} x(t) + w_2\right)e^{-\frac{\beta}{2}t} - \frac{\beta}{2}\left( x(t) + y \right) e^{-\frac{\beta}{2}t} \nonumber\\
&= \left( \Lambda_0 x(t) + \Lambda_0 y \right) e^{-\frac{\beta}{2}t} = \Lambda_0 \varphi(t),
\end{align}
hence
\begin{align}
\varphi(t) = e^{t \Lambda_0} \varphi(0) = e^{t \Lambda_0} \left( x(0) + y \right).
\end{align}
We deduce then the expression \eqref{EQUATExpreSolut_EDO_LambdBetn0}.\\
In the second case, since $\Lambda_0$ was assumed to be non-zero and the dimension $d$ is assumed to be equal to $3$, there exists a non-zero vector $z$ such that the action of $\Lambda_0$ can be represented by the cross product with the vector $z$. Denoting $z/\vert z \vert$ by $u$, we decompose:
\begin{align}
w_2 = \lambda u + w,
\end{align}
for some $\lambda \in \mathbb{R}$ and $w \in \mathbb{R}^3$, such that $w \perp u$. Such a decomposition satisfies that $w$ belongs to the image of $\Lambda_0$, let us denote by $y$ one of its preimages (considering for instance $y = \left((-z)/\vert z \vert^2\right)\wedge w$, we find $\Lambda_0 y = w$).\\
This time, we pose:
\begin{align}
\varphi(t) = x(t) + y - \lambda u t,
\end{align}
and in this case we find:
\begin{align}
\frac{\dd}{\dd t} \varphi(t) &= \Lambda_0 x(t) + w_2 - \lambda u \nonumber\\
&= \Lambda_0 x(t) + \Lambda_0 y = \Lambda_0\left( x(t)+y-\lambda u t\right)
\end{align}
(because $u$ is colinear to $z$, so that $\Lambda_0 u = 0$), that is:
\begin{align}
\frac{\dd}{\dd t} \varphi(t) = \Lambda_0 \varphi(t).
\end{align}
We deduce therefore the expression \eqref{EQUATExpreSolut_EDO_LambdBet=0} for the solution $x(t)$.\\
Finally, the geometric description of the graph of the solution $t \mapsto x(t)$ is a consequence of the explicit computation of the exponential of the matrix $t \Lambda_0$. More precisely, in the $2$-dimensional case $d=2$, since we have:
\begin{align*}
\Lambda_0^2 = -a^2 I_2,
\end{align*}
we find:
\begin{align}
\Lambda_0^{2k} = (-a^2)^k I_2 \hspace{5mm} \text{and} \hspace{5mm} \Lambda_0^{2k+1} = (-a^2)^k \Lambda_0 \hspace{5mm} \forall k \in \mathbb{N}.
\end{align}
Then, we obtain:
\begin{align*}
\exp\left(t \Lambda_0\right) &= \sum_{k=0}^{+\infty} \frac{t^{2k}}{(2k)!} (-a^2)^k I_2 + \sum_{k=0}^{+\infty} \frac{t^{2k+1}}{(2k+1)!}(-a^2)^k \Lambda_0 \\
&= \begin{pmatrix} \sum_{k=0}^{+\infty} (-1)^k\frac{t^{2k}}{(2k)!}a^{2k} & \sum_{k=0}^{+\infty} (-1)^k\frac{t^{2k+1}}{(2k+1)!}a^{2k+1} \\ -\sum_{k=0}^{+\infty} (-1)^k\frac{t^{2k+1}}{(2k+1)!}a^{2k+1} & \sum_{k=0}^{+\infty} (-1)^k\frac{t^{2k}}{(2k)!}a^{2k} \end{pmatrix} \\
&= \begin{pmatrix} \cos(at) & \sin(at) \\ -\sin(at) & \cos(at) \end{pmatrix},
\end{align*}
that is, the matrix $e^{t\Lambda_0}$ is the matrix of a rotation.\\
The $3$-dimensional case $d=3$ is exactly similar, since there is always a choice of coordinate such that $z$ is colinear to the last vector of the canonical basis, providing in such a case that:
\begin{align}
\Lambda_0 = \begin{pmatrix} 0 & -a & 0 \\ a & 0 & 0 \\ 0 & 0 & 0 \end{pmatrix},
\end{align}
leading to the same conclusion.
\end{proof}

\subsection{Local Maxwellians and specular reflection, in dimension $d = 2$}

Before stating the main theorem of this section, we introduce the notion of rotational symmetry, that we will use to write the main result of this section.

\begin{defin}[Rotational symmetry, planar version]
\label{DEFINRotatSymme_Plan}
Let $\mathcal{C}$ be a $\mathcal{C}^1$ curve of the Euclidean plane $\mathbb{R}^2$. $\mathcal{C}$ is said to present a \emph{rotational symmetry} if there exists a point $x_0 \in \mathbb{R}^2$ of the plane such that, for any point $x \in \mathcal{C}$ and any affine rotation $\mathcal{R}_{x_0}$ around $x$, the image $\mathcal{R}_{x_0}(x)$ of $x$ by the rotation $\mathcal{R}_{x_0}$ belongs also to the curve $\mathcal{C}$.
\end{defin}

\begin{remar}
If the connected curve $\mathcal{C}$ presents a rotational symmetry around a certain point $x_0$, then $\mathcal{C}$ is a circle of center $x_0$. Therefore, if $\Omega$ is a connected regular open set of the plane, such that its (non empty) boundary presents a rotational symmetry, then $\Omega$ is either a disk, the complement of a disk, of an annulus contained between two concentric circles.
\end{remar}

\noindent
We can now turn to the characterization of the local Maxwellians solving the Boltzmann equation in the $2$-dimensional case, in the case of the specular reflection boundary condition.

\begin{theor}[Characterization of the local Maxwellians solving the free transport equation in a domain with specular reflection boundary condition (SRBC), case $d=2$]
\label{THEORChptrLgTBhSpecuRefle_B_C_}
Let the dimension $d$ be equal to $2$. Let $T$ be a strictly positive number (possibly $+\infty$), let $\Omega$ be a regular, connected open set of $\mathbb{R}^2$ and let $m$ be a local Maxwellian  that is, a function of the form \eqref{EQUATSS2.1ExpreMaxweLocGn}, which solves the transport equation \eqref{EQUATSect2Transport_Libre} on $[0,T]\times\Omega\times\mathbb{R}^2$, with the specular reflection boundary condition (SRBC), that is, such that
\begin{align}
\forall t \in [0,T], x \in \partial \Omega, v \in \mathbb{R}^d,\hspace{3mm} m(t,x,v) = m(t,x,v'),
\end{align}
where
\begin{align}
v' = v - 2\left( v\cdot n(x) \right) n(x)
\end{align}
and with $n(x)$ a unitary normal vector to $\partial\Omega$ at $x$.\\
Then,
\begin{itemize}
\item if $\Omega$ is a half-plane, with a boundary of the form $\partial\Omega = \{x_0 n + \lambda \tau\ /\ \lambda \in \mathbb{R}\}$, with $x_0 \in \mathbb{R}$, $n,\tau \in \mathbb{S}^1$ such that $n\perp \tau$ (that is, $x_0 n$ is the closest point of boundary $\partial\Omega$ to the origin, and $\partial\Omega$ is orientated by the unit vector $\tau$), there exist six real numbers $r_0>0$, $\alpha>0$, $\beta$, $\gamma>0$, $\ell_1$ ,$\ell_2 \in \mathbb{R}$ such that:
\begin{align}
\label{EQUATLocalMaxweSpecu_d=2_1Plan}
m(t,x,v) &= r_0 \exp\big( - \alpha (x-tv-2x_0 n)\cdot(x-tv) + \beta (x-tv-x_0n)\cdot v - \gamma \vert v \vert^2 \nonumber\\
&\hspace{80mm}- 2\ell_1 \tau \cdot(x-tv) + 2 \ell_2 \tau \cdot v \big) \nonumber\\
&\hspace{90mm}\forall t \in [0,T], x \in \partial \Omega, v \in \mathbb{R}^d,
\end{align}
\item if $\Omega$ is a slab, with a boundary of the form $\partial\Omega = \{x_1 n + \lambda \tau\ /\ \lambda \in \mathbb{R}\} \cup \{ x_2n + \mu \tau\ /\ \mu \in \mathbb{R}\}$, with $x_0 \in \mathbb{R}$, $n,\tau \in \mathbb{S}^1$ such that $n\perp \tau$, there exist four real numbers $r_0>0$, $\gamma>0$, $\ell_1, \ell_2 \in \mathbb{R}$ such that:
\begin{align}
\label{EQUATLocalMaxweSpecu_d=2_2Plan}
m(t,x,v) &= r_0 \exp\left( - \gamma \vert v \vert^2 - 2 \ell_1\tau\cdot(x-tv) + 2\ell_2\tau\cdot v \right) \hspace{5mm} \forall t \in [0,T], x \in \partial \Omega, v \in \mathbb{R}^d,
\end{align}
\item if $\Omega$ is a disk centered on $y \in \mathbb{R}^2$, or the complement of a disk centered on $y \in \mathbb{R}^2$, or an annulus contained between two concentric circles centered on $y \in \mathbb{R}^2$, there exist two strictly positive real numbers $r_0>0$ and $\gamma > 0$, and a skew-symmetric matrix $\Lambda_0$ such that:
\begin{align}
\label{EQUATChptrLgTBhMaxwlSpecu_Disk}
m(t,x,v) = r_0 \exp\left( -\gamma \vert v \vert^2 + 2 \left[ \Lambda_0 (x-y) \right] \cdot v \right) \hspace{5mm} \forall t \in [0,T], x \in \partial \Omega, v \in \mathbb{R}^d.
\end{align}
\item finally, if $\Omega$ is \emph{not} a half-plane, nor a slab, nor a disk, the complement of a disk, nor an annulus contained between two concentric circles, there exist two strictly positive real numbers $r_0>0$ and $\gamma>0$ such that:
\begin{align}
\label{EQUATChptrLgTBhMaxwlSpecuNDisk}
m(t,x,v) = r_0 \exp\left( -\gamma \vert v \vert^2 \right) \hspace{5mm} \forall t \in [0,T], x \in \partial \Omega, v \in \mathbb{R}^d,
\end{align}

\end{itemize}
\end{theor}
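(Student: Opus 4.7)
The plan is to start from the constraint \eqref{EQUATChptrLgTBhCond_u(tx)}, derived just before the statement: since it must hold identically in $t \in [0,T]$, I separate it into its coefficient of $t$ and its constant term, yielding the two tangency conditions
\begin{align*}
n(x) \cdot \left( \alpha x + w_1 \right) = 0, \qquad n(x) \cdot \left( \Lambda_0 x + (\beta/2) x + w_2 \right) = 0,
\end{align*}
valid for all $x \in \partial \Omega$. Geometrically, the two vector fields $V_1(x) = \alpha x + w_1$ and $V_2(x) = \Lambda_0 x + (\beta/2) x + w_2$ are both tangent to $\partial \Omega$ at every boundary point. Invoking Lemmas \ref{LEMMEChptrLgTBhSRODEAlpha} and \ref{LEMMEChptrLgTBhSRODELambd}, the integral curve of $V_i$ through any point of $\partial \Omega$ is complete, explicit (a ray, a circle, or a logarithmic spiral in dimension $d=2$), and it must remain entirely contained in the closed set $\partial \Omega$.

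Next, I would argue by cases on which of the parameters $\alpha, \beta, \Lambda_0$ are non-zero, translating this algebraic information into geometric constraints on $\partial \Omega$. If $\alpha \neq 0$, then through every boundary point distinct from the unique equilibrium $-w_1/\alpha$ of $V_1$, the integral curve is a half-line converging to $-w_1/\alpha$; closedness of $\partial\Omega$ forces $-w_1/\alpha \in \partial \Omega$, and the $\mathcal{C}^1$ regularity at this point forces the opposite ray to belong to $\partial \Omega$ as well, so each connected component of $\partial\Omega$ through $-w_1/\alpha$ is a straight line, and with $\Omega$ connected this gives the half-plane. The case $\beta \neq 0$ with $\Lambda_0 \neq 0$ is excluded, because the orbits of $V_2$ would then be logarithmic spirals winding around their center, which is incompatible with $\partial\Omega$ being a closed $\mathcal{C}^1$ $1$-manifold (the center is an accumulation point at which the boundary fails to be locally a manifold). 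When $\beta = 0$ and $\Lambda_0 \neq 0$, the orbits of $V_2$ are concentric circles centered at the fixed point $y$ of $V_2$, and $\Omega$ must therefore be a disk, the complement of a disk, or an annulus. Finally, when $\alpha = \beta = 0$ and $\Lambda_0 = 0$, conditions $(A)$ and $(B)$ become $n(x) \cdot w_1 = n(x) \cdot w_2 = 0$: unless $w_1 = w_2 = 0$, this forces each connected component of $\partial\Omega$ to be a straight line parallel to both $w_1$ and $w_2$, which yields either the slab case (two parallel lines) or, if $w_1 = w_2 = 0$, the generic case with an arbitrary regular domain.

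For each of the four geometric configurations, I would then substitute the explicit form of $n(x)$ into conditions $(A)$ and $(B)$ to pin down the admissible parameters. On a single line $\{x_0 n + \lambda \tau : \lambda \in \mathbb{R}\}$, condition $(A)$ fixes the normal component of $w_1$ as $-\alpha x_0$, while a tangential component $\ell_1 \tau$ remains free, and similarly for $w_2$ via $(B)$; on two parallel lines with distinct normal coordinates $x_1 \neq x_2$, the same identities applied to both lines force $\alpha = 0$, $\beta = 0$, $\Lambda_0 = 0$ and $w_i = \ell_i \tau$; on a circle of center $y$, writing $n(x) = (x-y)/|x-y|$ and expanding $(x-y) \cdot V_i = 0$ as a polynomial identity on the circle forces $\alpha = \beta = 0$, $w_1 = 0$ and $w_2 = -\Lambda_0 y$, with the annulus case producing the same parameters simultaneously on both concentric circles. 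A direct substitution of these values in the general expression \eqref{EQUATSS2.1ExpreMaxweLocGn}, together with the already-determined explicit expressions of $\sigma_0$, $C$ and $\rho_0$, then yields after elementary algebraic simplification the explicit forms \eqref{EQUATLocalMaxweSpecu_d=2_1Plan}, \eqref{EQUATLocalMaxweSpecu_d=2_2Plan}, \eqref{EQUATChptrLgTBhMaxwlSpecu_Disk} and \eqref{EQUATChptrLgTBhMaxwlSpecuNDisk}.

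The main obstacle will be the rigorous geometric passage from the tangency conditions to the classification of $\partial\Omega$: one must justify carefully that a closed $\mathcal{C}^1$ curve in $\mathbb{R}^2$ which is a union of rays emanating from a common fixed point must in fact be a union of entire straight lines through that point, and that no closed $\mathcal{C}^1$ curve can contain a logarithmic spiral. The first statement will combine completeness of the flow with $\mathcal{C}^1$ continuation across the fixed point, while the second relies on the accumulation of the spiral at its center and the resulting failure of the $1$-manifold property there. Once these geometric rigidity statements are established, the remainder of the argument is essentially bookkeeping, reducing the Maxwellian formula case by case.
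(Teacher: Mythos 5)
Your proposal follows essentially the same route as the paper's proof: split the boundary condition \eqref{EQUATChptrLgTBhCond_u(tx)} into the two tangency conditions on the fields $\alpha x + w_1$ and $\Lambda_0 x + \tfrac{\beta}{2}x + w_2$, use Lemmas \ref{LEMMEChptrLgTBhSRODEAlpha} and \ref{LEMMEChptrLgTBhSRODELambd} to place their integral curves (rays, circles, or logarithmic spirals) inside $\partial\Omega$, exclude the spiral by the failure of $\mathcal{C}^1$ regularity at its center, and classify $\Omega$ accordingly before pinning down the parameters. The only sub-case your enumeration skips, $\Lambda_0 = 0$ with $\beta \neq 0$, is handled exactly as your $\alpha \neq 0$ case (the field $\tfrac{\beta}{2}x + w_2$ again forces a half-plane through $-2w_2/\beta$), so the argument is essentially complete and matches the paper.
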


\begin{remar}
It is interesting to observe that, in the case when the domain $\Omega$ is not bounded, we find for the local Maxwellians the expressions \eqref{EQUATLocalMaxweSpecu_d=2_1Plan} and \eqref{EQUATLocalMaxweSpecu_d=2_2Plan}, that are not steady states of the Boltzmann equation.
\end{remar}

\begin{proof}[Proof of Theorem \ref{THEORChptrLgTBhSpecuRefle_B_C_}]
By Theorem \ref{THEORSS2.1ExpreMaxweLocGn}, a local Maxwellian that solves the free transport equation in an open domain is necessarily of the form \eqref{EQUATSS2.1ExpreMaxweLocGn}. As for the bounce-back case, we start with exploiting what implies the boundary condition on the vector field $u(t,x)$, starting from the expression \eqref{EQUATChptrLgTBhCond_u(tx)}.\\
At $x$ fixed, the left hand side of \eqref{EQUATChptrLgTBhCond_u(tx)} is polynomial expression in $t$, which is zero for the non trivial interval $[0,T] \owns t$, therefore all the coefficients of the expression have to be zero, so that we deduce the following system: $\forall x \in \partial \Omega$ we have

\begin{subequations}
\label{EQUATChptrLgTBhSyst_u(tx)}
\begin{empheq}[left=\empheqlbrace]{align}
\left[ \alpha x + w_1 \right] \cdot n(x) = 0, \label{EQUATChptrLgTBhSystu(tx)1}\\
\left[ \Lambda_0 \cdot x + \displaystyle{\frac{\beta}{2}} x + w_2 \right] \cdot n(x) = 0, \label{EQUATChptrLgTBhSystu(tx)2}
\end{empheq}
\end{subequations}

\noindent
Therefore, we start with \eqref{EQUATChptrLgTBhSystu(tx)1}, and we consider the Cauchy problem \eqref{EQUATChptrLgTBhSRODEAlpha}, such that the initial datum $x_0$ belongs to the boundary $\partial\Omega$ of the domain $\Omega$. In such a case, the curve $x(t)$ always lies on the boundary $\partial \Omega$ of the domain. Indeed, the following result holds: if a parametric curve $I \subset \mathbb{R} \rightarrow \mathbb{R}^d, t \mapsto x(t)$ is such that $\frac{\dd}{\dd t} x(t) \cdot \nabla g\left( x(t) \right) = 0$ for a given function $\mathbb{R}^d \rightarrow \mathbb{R}, x \mapsto g(x)$ and for every $t \in I$, then the curve $x(t)$ lies in the surface $S$ defined implicitely as $S=\{x \in \mathbb{R}^d \ /\ g(x) = 0\}$.\\
Let us assume in addition that $\alpha \neq 0$. In such a case, we can consider the initial datum $x_0 \in \partial \Omega$ such that $x_0 \neq -w_1/\alpha$ (because the boundary $\partial \Omega$ is not empty, and cannot be reduced to a single point). We deduce by the formula \eqref{EQUATChptrLgTBhSRODEAlpha} of Lemma \ref{LEMMEChptrLgTBhSRODEAlpha} that the boundary $\partial \Omega$ contains the half-line with initial point $-w_1/\alpha$, which is passing through the point $x_0$. Considering that $\partial \Omega$ is a regular curve (and considering in particular its tangent space at the point $-w_1/\alpha$), we deduce that $\partial \Omega$ has to be reduced to a straight line through $-w_1/\alpha$. Therefore, $\Omega$ has to be a half plane, such that $-w_1/\alpha$ belongs to its boundary.\\
If now $\alpha = 0$ and $w_1 \neq 0$, \eqref{EQUATChptrLgTBhSystu(tx)1} implies that $\partial \Omega$ has to be the union of straight lines parallel to $w_1$. Since in addition $\Omega$ was assumed to be connected, $\partial \Omega$ is either a single straight line, or the union of two parallel lines, and so $\Omega$ is either a half plane, or a stripe.\\
\newline
We turn now to the second equation \eqref{EQUATChptrLgTBhSystu(tx)2}. We consider a solution of the Cauchy problem \eqref{EQUATChptrLgTBhSRODELambd}, with initial datum $x_0 \in \partial\Omega$, so that the whole curve described by the solution belongs to the boundary $\partial\Omega$.\\
In a first time, let us assume that $\Lambda_0 \neq 0$. According to Lemma \ref{LEMMEChptrLgTBhSRODELambd}, the curve of the solution is either a logarithmic spiral, or a circle. In the first case, one would reach a contradiction. Indeed, $\partial\Omega$ being the boundary of the domain $\Omega$, $\partial\Omega$ is in particular closed by assumption, so that the center of the spiral has to belong to $\partial\Omega$. However, at this point the curve does not admit a tangent line, which would violate the assumption that $\Omega$ is a regular open set. Therefore, $\partial\Omega$ cannot be a logarithmic spiral, and so if $\Lambda_0 \neq 0$, then $\beta = 0$, and the boundary $\partial\Omega$ is the union of concentric circles, centered on $-y$. Since $\Omega$ was assumed to be connected, $\partial\Omega$ is either reduced to a single circle, or to two concentric circles, and $\Omega$ is either a disk, or the complement of a disk, or an annulus contained between two concentric circles.\\
If now $\Lambda_0 = 0$, then the same conclusions hold for the boundary as the conclusions obtained from the first equation \eqref{EQUATChptrLgTBhSystu(tx)1}.\\
\newline
In summary, we obtained the following results.
\begin{itemize}
\item if $\alpha \neq 0$, then $\Omega$ is a half-plane, with $-w_1/\alpha \in \partial\Omega$,
\item if $\alpha = 0$ and $w_1 \neq 0$, then $\Omega$ is either a half-plane or a stripe, with a boundary $\partial\Omega$ constituted with one or two straight lines parallel to $w_1$,
\item if $\Lambda_0 \neq 0$, then $\beta = 0$ and $\partial\Omega$ is either a disk, or the complement of a disk, or an annulus contained between two concentric disks, all centered on $-y = (\Lambda_0)^{-1}(w_2)$,
\item if $\Lambda_0 = 0$ and $\beta \neq 0$, then $\Omega$ is a half-plane, with $-2w_2/\beta \in \partial\Omega$,
\item if $\Lambda_0 = 0$, $\beta = 0$ and $w_2 \neq 0$, then $\Omega$ is either a half-plane or a stripe, with a boundary $\partial\Omega$ constituted with one or two straight lines parallel to $w_2$.
\end{itemize}
The result of Theorem \ref{THEORChptrLgTBhSpecuRefle_B_C_} follows.
\end{proof}

\noindent
It is usual in the literature (\cite{Desv990}, \cite{ReVi008}) to consider only bounded domains $\Omega$, that are in addition simply connected. This particular case forbids to consider domains such as stripes or half-planes, and the case of $\Omega$ being an annulus is also excluded. We can then recover the following result, classical in the literature, as a direct consequence of Theorem \ref{THEORChptrLgTBhSpecuRefle_B_C_}.

\begin{corol}[Characterization of the local Maxwellians solving the free transport equation in a bounded domain with specular reflection boundary condition (SRBC), case $d=2$]
\label{COROLChptrLgTBhSpecuRefle_B_C_}
Let the dimension $d$ be equal to $2$. Let $T$ be a strictly positive number (possibly $+\infty$), let $\Omega$ be a regular open set of $\mathbb{R}^2$, bounded and simply connected and let $m$ be a local Maxwellian  that is, a function of the form \eqref{EQUATSS2.1ExpreMaxweLocGn}, which solves the transport equation \eqref{EQUATSect2Transport_Libre} on $[0,T]\times\Omega\times\mathbb{R}^2$, with the specular reflection boundary condition (SRBC), that is, such that
\begin{align}
\forall t \in [0,T], x \in \partial \Omega, v \in \mathbb{R}^d,\hspace{3mm} m(t,x,v) = m(t,x,v'),
\end{align}
where
\begin{align}
v' = v - 2\left( v\cdot n(x) \right) n(x)
\end{align}
and with $n(x)$ a unitary normal vector to $\partial\Omega$ at $x$.\\
Then,
\begin{itemize}
\item if $\Omega$ is \emph{not} a disk, there exist two strictly positive real numbers $r_0>0$ and $\gamma>0$ such that:
\begin{align}
\label{EQUATChptrLgTBhMaxwlSpecuNDis2}
m(t,x,v) = r_0 \exp\left( -\gamma \vert v \vert^2 \right) \hspace{5mm} \forall t \in [0,T], x \in \partial \Omega, v \in \mathbb{R}^d,
\end{align}
\item if $\Omega$ is a disk centered on $y \in \mathbb{R}^2$, there exist two strictly positive real numbers $r_0>0$ and $\gamma > 0$, and a skew-symmetric matrix $\Lambda_0$ such that:
\begin{align}
\label{EQUATChptrLgTBhMaxwlSpecu_Dis2}
m(t,x,v) = r_0 \exp\left( -\gamma \vert v \vert^2 + 2 \left[ \Lambda_0 (x-y) \right] \cdot v \right) \hspace{5mm} \forall t \in [0,T], x \in \partial \Omega, v \in \mathbb{R}^d.
\end{align}
\end{itemize}
\end{corol}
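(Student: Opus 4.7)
The plan is to deduce this corollary directly from Theorem \ref{THEORChptrLgTBhSpecuRefle_B_C_} by eliminating the geometric cases that are incompatible with the extra hypotheses of boundedness and simple connectedness. Theorem \ref{THEORChptrLgTBhSpecuRefle_B_C_} already provides a complete classification of admissible domains into four mutually exclusive families: half-planes, slabs, the family consisting of disks, complements of disks, and annuli contained between concentric circles, and finally all other domains. Once this dichotomy is in hand, the corollary becomes a routine matter of discarding the families that contradict our assumptions on $\Omega$.

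First, I would rule out the half-plane and slab cases on the basis of boundedness: both are plainly unbounded subsets of $\mathbb{R}^2$, so neither can occur under the hypotheses of the corollary. Next, among the three sub-cases in the third family of Theorem \ref{THEORChptrLgTBhSpecuRefle_B_C_}, the complement of a disk is unbounded and thus also excluded, while an annulus between two concentric circles, although bounded, is not simply connected and is therefore incompatible with the simple connectedness assumption. Only the disk centered at some $y \in \mathbb{R}^2$ survives from this family.

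It then remains to identify the corresponding Maxwellian in each surviving case. If $\Omega$ is a disk centered at $y$, we invoke directly \eqref{EQUATChptrLgTBhMaxwlSpecu_Disk} from Theorem \ref{THEORChptrLgTBhSpecuRefle_B_C_}, which yields \eqref{EQUATChptrLgTBhMaxwlSpecu_Dis2}. In every other situation, $\Omega$ falls into the fourth case of the theorem, for which the Maxwellian reduces to the global form \eqref{EQUATChptrLgTBhMaxwlSpecuNDisk}, giving \eqref{EQUATChptrLgTBhMaxwlSpecuNDis2}. Since Theorem \ref{THEORChptrLgTBhSpecuRefle_B_C_} is used as a black box, there is essentially no obstacle to overcome here: the only care needed is in checking that each excluded family indeed violates boundedness or simple connectedness, which is elementary planar geometry.
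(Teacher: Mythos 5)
Your proposal is correct and follows exactly the route the paper takes: the corollary is obtained as a direct consequence of Theorem \ref{THEORChptrLgTBhSpecuRefle_B_C_}, discarding the half-plane, slab, and complement-of-a-disk cases by unboundedness and the annulus case by the failure of simple connectedness, so that only the disk case \eqref{EQUATChptrLgTBhMaxwlSpecu_Disk} and the generic case \eqref{EQUATChptrLgTBhMaxwlSpecuNDisk} survive.
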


\begin{remar}
Corollary \ref{COROLChptrLgTBhSpecuRefle_B_C_} states that if the boundary $\partial\Omega$ of the domain does not present a rotational symmetry in the sense of Definition \ref{DEFINRotatSymme_Plan}, then, there exists a single local Maxwellian solving the free transport equation with specular reflection boundary condition (for mass and kinetic energy fixed), and such a Maxwellian is actually global.\\
On the other hand, if the domain is rotationnaly symmetric, then there exist other local Maxwellians solving the free transport equation with SRBC. Such Maxwellians do not depend on time, but the coefficients do depend on the position variable $x$.\\
Let us also remark that the additional degree of freedom obtained in the symmetric case, that is, the possibility to choose any skew-symmetric matrix $\Lambda_0$ to define $m$ as in \eqref{EQUATChptrLgTBhMaxwlSpecu_Disk}, corresponds actually to a single additional dimension of freedom. Indeed, a skew-symmetric matrix in dimension $2$ necessarily writes:
\begin{align*}
\Lambda_0 = \begin{pmatrix} 0 & -a \\ a & 0 \end{pmatrix},
\end{align*}
where $a$ is an arbitrary real number.
\end{remar}

\subsection{Local Maxwellians and specular reflection, in dimension $d = 3$}

We conclude this study with the second physically relevant case (and maybe the most relevant of the two): the $3$-dimensional case.\\
Before stating the result, we start with introducing definitions concerning particular symmetries that we will use in the following.

\begin{defin}[Rotational symmetry, spatial version]
\label{DEFINRotatSymme_Spat}
Let $\mathcal{S}$ be a $\mathcal{C}^1$ surface of the Euclidean space $\mathbb{R}^3$. $\mathcal{S}$ is said to present a \emph{rotational symmetry} if there exists an affine straight line $\Delta \subset \mathbb{R}^3$ of the space such that, for any point $x \in \mathcal{S}$ and any affine rotation $\mathcal{R}_{x_0}$ around $\Delta$, the image $\mathcal{R}_{\Delta}(x)$ of $x$ by the rotation $\mathcal{R}_{\Delta}$ belongs also to the surface $\mathcal{S}$.
\end{defin}

\begin{remar}
A surface $\mathcal{S}$ that presents a rotational symmetry is the union of circles that are all inscribed in planes orthogonal to the axis of symmetry $\Delta$.
\end{remar}

\begin{defin}[Helical symmetry]
\label{DEFINRotatSymme_Spat}
Let $\mathcal{S}$ be a $\mathcal{C}^1$ surface of the Euclidean space $\mathbb{R}^3$. $\mathcal{S}$ is said to present an \emph{helical symmetry} if there exists an affine straight line $\Delta \subset \mathbb{R}^3$ of the space, orientated by a unit vector $v \in \mathbb{S}^2$, and a real number $\lambda \in \mathbb{R}$ such that, for any point $x \in \mathcal{S}$ and any affine rotation $\mathcal{R}_{\Delta,\theta}$ around $\Delta$ of angle $\theta$, the image $\mathcal{R}_{\Delta,\theta}(x) + \lambda \theta u$ of $x$ by the screw motion $\mathcal{R}_{\Delta,\theta} + \lambda \theta u$ belongs also to the surface $\mathcal{S}$.\\
The number $2\pi\lambda$ is called the \emph{shift} of the helical symmetry.
\end{defin}

\begin{remar}
In the case when a surface $\mathcal{S}$ presents an helical symmetry, the surface is the union of helices, that all have the same axis and the same shift (that is, the same distance between two consecutive coils).\\
In the case when $\lambda = 0$, the notion of helical symmetry coincides with the notion of rotational symmetry.
\end{remar}

\noindent
We introduce a precise nomenclature for remarkable surfaces that will appear in the main result of this section.

\begin{defin}[Generalized cylinder]
A surface $\mathcal{S}$ of the Euclidean space $\mathbb{R}^3$ is said to be a \emph{generalized cylinder} if there exists a family $\left(d_s\right)_{s \in I}$ of straight lines $d_s$, all parallel, such that $\mathcal{S}$ is the union of all of the lines $d_s$, for $s \in I$.\\
We call the \emph{direction} of the generalized cylinder $\mathcal{S}$ any unit vector that orientates any of the lines $d_s$.
\end{defin}

\begin{remar}
A generalized cylinder is in particular a ruled surface.
\end{remar}

\begin{defin}[Cylinder of revolution]
A surface $\mathcal{S}$ of the Euclidean space $\mathbb{R}^3$ is said to be a \emph{cylinder of revolution} if $\mathcal{S}$ is the union of a family of parallel straight lines obtained from all the possible rotations of a given line $d$ around an axis $\Delta$ parallel to $d$.
\end{defin}

\begin{remar}
In particular, a cylinder of revolution is a generalized cylinder such that all the lines $d_s$ intersect a given circle inscribed in a plane orthogonal to the axis $\Delta$.
\end{remar}

\noindent
Finally, we will make use of the two following results concerning surfaces that are generalized cylinders or presenting an helical symmetry.\\
The first result describes the intersection between the class of the surfaces presenting an helical symmetry, with the class of the generalized cylinders.

\begin{lemma}
\label{LEMMECylinGenerSymetHelic}
Let $\mathcal{S}$ be a $\mathcal{C}^1$ connected surface of the Euclidean space $\mathbb{R}^3$, that is a generalized cylinder. Then, if in addition $\mathcal{S}$ presents an helical symmetry:
\begin{itemize}
\item if the shift $\lambda$ of the helical symmetry is zero, $\mathcal{S}$ is either a plane, or a cylinder of revolution,
\item if the shift $\lambda$ of the helical symmetry is non-zero, $\mathcal{S}$ is a cylinder of revolution.
\end{itemize}
\end{lemma}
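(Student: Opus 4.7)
The plan is to combine the generalized cylinder structure (rulings in direction $u$) with the helical symmetry (axis $\Delta$ directed by a unit vector $v$, shift $2\pi\lambda$) to force a dichotomy on the relative position of $u$ and $v$. Throughout, I write the helical action as $\sigma_\theta(x) = R_\theta(x - O) + O + \lambda\theta v$, for $O$ a fixed point on $\Delta$ and $R_\theta$ the rotation of angle $\theta$ around $\Delta$.

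The key observation that couples both structures is the following: for any $y \in \mathcal{S}$, the ruling $\{y + su : s \in \mathbb{R}\}$ lies in $\mathcal{S}$, so by invariance of $\mathcal{S}$ under $\sigma_\theta$, the line $\{\sigma_\theta(y) + s R_\theta(u) : s \in \mathbb{R}\}$ also lies in $\mathcal{S}$; since through each point of this line there passes, in turn, a ruling in direction $u$, I obtain
\[
\big\{ \sigma_\theta(y) + s R_\theta(u) + s' u : s, s' \in \mathbb{R} \big\} \subset \mathcal{S}.
\]
For any $\theta$ such that $R_\theta(u)$ is not collinear to $u$ (which is possible precisely when $u$ is not parallel to $v$), this set is an affine plane $P$. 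By invariance of domain, $P$ is open in the $2$-surface $\mathcal{S}$, and being an affine plane it is closed in $\mathbb{R}^3$ and hence in $\mathcal{S}$; connectedness of $\mathcal{S}$ then forces $\mathcal{S} = P$.

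The next step is to determine which planes can be invariant under every $\sigma_\theta$. Differentiating the invariance at $\theta = 0$, the infinitesimal generator $x \mapsto v \wedge (x - O) + \lambda v$ must be tangent to $P$ at every point of $P$. Writing $n$ for a unit normal to $P$, this reads $(n \wedge v) \cdot (x - O) + \lambda (n \cdot v) = 0$ for every $x \in P$. Viewing this as an affine function of $x$ vanishing on the whole plane $P$, its linear part $n \wedge v$ must be perpendicular to $P$, hence collinear to $n$; but $n \wedge v$ is also orthogonal to $n$, so $n \wedge v = 0$, i.e.\ $n \parallel v$ and $P$ is perpendicular to $\Delta$. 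The constant term then reduces to $\pm \lambda = 0$, whence $\lambda = 0$. This simultaneously rules out the configuration $u \not\parallel v$ whenever $\lambda \neq 0$, and identifies $\mathcal{S}$ as a plane perpendicular to $\Delta$ in the case $u \not\parallel v$ with $\lambda = 0$.

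In the complementary case $u \parallel v$, I pick any $y \in \mathcal{S} \setminus \Delta$ (which exists because $\mathcal{S}$ is a $2$-surface while $\Delta$ is a line); since $u$ and $v$ are collinear, the term $\lambda\theta v$ in $\sigma_\theta$ can be absorbed into the ruling parameter, and the union over $\theta$ of the helical orbits of the ruling through $y$ parameterizes the cylinder of revolution around $\Delta$ of radius $\mathrm{dist}(y, \Delta)$; the same open/closed/connectedness argument then forces $\mathcal{S}$ to equal this cylinder. Assembling the cases: for $\lambda \neq 0$ only $u \parallel v$ survives and $\mathcal{S}$ is a cylinder of revolution; for $\lambda = 0$, $\mathcal{S}$ is either a cylinder of revolution ($u \parallel v$) or a plane perpendicular to $\Delta$ ($u \not\parallel v$), matching the lemma. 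The main obstacle I anticipate is the rigidity step in the third paragraph, which has to extract both the geometric constraint $P \perp \Delta$ and the exclusion of $\lambda \neq 0$ from a single infinitesimal tangency condition; all other steps are structural and follow from combining the two symmetry families with invariance of domain.
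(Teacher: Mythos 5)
Your argument is correct, and it takes a genuinely different route from the paper's. The paper fixes a single ruling $d\subset\mathcal{S}$ and classifies, case by case, the surface swept by $d$ under the one-parameter group of screw motions according to the relative position of $d$ and the axis $\Delta$ (parallel, orthogonal, oblique; meeting $\Delta$ or not): the oblique configurations are excluded through regularity (the cone, and, citing \cite{Stru967}, the self-intersecting oblique helicoid), and a separate ad hoc discussion is needed to show that a right helicoid is not a generalized cylinder. You instead exploit the ruling structure twice: the screw image of a ruling is again a line of $\mathcal{S}$, each of whose points carries a ruling in the original direction $u$, so whenever $u$ is not parallel to the axis these two families of lines sweep out an entire affine plane $P$ inside $\mathcal{S}$; invariance of domain makes $P$ open in $\mathcal{S}$, it is closed, and connectedness gives $\mathcal{S}=P$. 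The tangency of the infinitesimal generator $x\mapsto v\wedge(x-O)+\lambda v$ to $P$ then forces $P\perp\Delta$ and $\lambda=0$ in a single computation, while the parallel case $u\parallel v$ yields the cylinder of revolution directly by the same open--closed--connected argument. Your version is more self-contained (it never needs to discuss cones, hyperboloids or helicoids, nor to invoke the external reference for the self-intersection of oblique helicoids) and it makes explicit the topological identification of $\mathcal{S}$ with the swept surface, which the paper leaves implicit; what the paper's case analysis buys in exchange is an explicit catalogue of the invariant ruled surfaces being excluded, which has some independent descriptive value. Both proofs rest on the same underlying fact that the orbit of a ruling under the symmetry group stays in $\mathcal{S}$, so the difference is in how that orbit is analyzed rather than in the starting point.
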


\begin{proof}
By assumption, $\mathcal{S}$ presents an helical symmetry (say, of axis $\Delta$), and contains at least one straight line $d$.\\
Our definition of helical symmetry covers also the case of a rotational symmetry. Let us start with the case, that is, we assume in a first time that the shift of the helical symmetry is zero.\\
Let us assume first that $d$ intersects the axis of symmetry $\Delta$, in which case these two lines are either orthogonal or parallel, or none of these two cases. In the two first cases, $\mathcal{S}$ is a plane, or a cylinder of revolution. In the latter case, that is, when $d$ and $\Delta$ intersect with an angle different from $k\pi/2$, $k = 0$ or $1$, the rotational motion of $d$ around $\Delta$ generates a cone, which is not a $\mathcal{C}^1$ surface in the neighbourhood of the intersection between $d$ and $\Delta$.\\
If now we assume that $d$ does not intersect the axis $\Delta$, if $d$ and $\Delta$ are parallel or orthognal, the surface generated by the rotational motion of $d$ around the axis $\Delta$ is either a cylinder of revolution, or is contained in a plane. If now $d$ and $\Delta$ are neither parallel nor orthogonal, the surface generated by the rotational motion of $d$ is a one-sheeted hyperboloid of revolution, which is not a generalized cylinder. This case is therefore excluded.\\
The case of an helical symmetry with a zero shift is then completely addressed.\\
\newline
Let us now assume that the helical symmetry has a non-zero shift. We will make use of the following result, that the reader may find for instance in \cite{Stru967}, to eliminate most of the cases: if the line $d$ is not colinear to $\Delta$, nor orthogonal to $\Delta$, then the helicoid generated by the screw motion of $d$ around the axis $\Delta$ presents a self-intersection. Such a surface is called an \emph{helicoid with directrix cone}, or an \emph{helicoid of oblique type}. Therefore $\mathcal{S}$, which would contain such an oblique helicoid, cannot be a $\mathcal{C}^1$ surface. This case is then excluded.\\
If $d$ and $\Delta$ are parallel, the helical motion of $d$ around $\Delta$ generates a cylinder of revolution.\\
The only remaining case is then when $d$ and $\Delta$ are orthogonal. The helical motion of $d$ around $\Delta$ generates a \emph{right helicoid} (\emph{closed} or \emph{open}, depending if $d$ intersects $\Delta$ or not). Such an helicoid is by definition a ruled surface, however we will now show that it cannot be a generalized cylinder. If a right helicoid would be a generalized cylinder, its direction could not be parallel to $\Delta$, because it would contain a plane such that its normal is orthogonal to $\Delta$, which is not the case. For the same reason, its direction cannot be orthogonal to $\Delta$, because the helicoid would contain a plane that is orthogonal to $\Delta$. Finally, if its direction is not parallel nor orthogonal to $\Delta$, then the helicoid would be of oblique type, which is not the case (because a right helicoid is a $\mathcal{C}^1$ surface, but an oblique helicoid is not). Therefore, a right helicoid is never a generalized cylinder, and so the case $d$ and $\Delta$ are orthogonal is also excluded.\\
\newline
Since we covered all the possible cases, the proof of Lemma \ref{LEMMECylinGenerSymetHelic} is complete.
\end{proof}

\noindent
The second result states that if a surface presents two different helical symmetries, then such a surface has to be either a plane, a cylinder of revolution or a sphere.

\begin{lemma}
\label{LEMMEDoublSymetHelic}
Let $\mathcal{S}$ be a $\mathcal{C}^1$ connected surface of the Euclidean space $\mathbb{R}^3$ that is not a plane, and that presents two different helical symmetries (that is, such that either the two axes of the two symmetries are different, or the two shifts of the two symmetries are different, or both the two axes and the two shifts are different).\\
Then, $\mathcal{S}$ is either a cylinder of revolution, or a sphere.
\end{lemma}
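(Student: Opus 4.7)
The plan is to analyze the group $G \subset \mathrm{Isom}(\mathbb{R}^3)$ generated by the two one-parameter subgroups of the helical symmetries. Since both flows preserve $\mathcal{S}$, so does every element of the Lie subalgebra of $\mathfrak{se}(3)$ generated by the associated Killing fields $X_i(x) = u_i \wedge (x - y_i) + \lambda_i u_i$, where $u_i$ is the unit direction of the axis $\Delta_i$, $y_i \in \Delta_i$, and $2\pi\lambda_i$ the shift. In particular the Lie bracket $[X_1,X_2]$ is a Killing field whose flow preserves $\mathcal{S}$, hence it is tangent to $\mathcal{S}$ at every point; iterated brackets give further such tangent Killing fields. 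I would then split the argument into four cases according to the relative position of $\Delta_1$ and $\Delta_2$.

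Case~1: $\Delta_1 = \Delta_2$ and $\lambda_1 \neq \lambda_2$. Composing $g_1(\theta) \circ g_2(-\theta)$ yields a pure translation by $(\lambda_1 - \lambda_2)\theta$ along the common axis, so $\mathcal{S}$ is invariant under the full rotation-plus-translation group of that axis, forcing $\mathcal{S}$ to be a cylinder of revolution about it. Case~2: $\Delta_1 \parallel \Delta_2$ but $\Delta_1 \neq \Delta_2$. A direct computation of $[X_1,X_2]$ gives a nonzero constant translation Killing field orthogonal to the common direction $u$; conjugating it by the rotational part of the flow of $X_1$ yields translations in every direction of the plane $u^\perp$, which forces the orbit of any point of $\mathcal{S}$ to contain an open $2$-plane and hence $\mathcal{S}$ to be a plane, contradicting the hypothesis.

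Case~3: $\Delta_1, \Delta_2$ intersect at $p_0$ with $u_1 \not\parallel u_2$. After translating $p_0$ to the origin, the bracket reads $[X_1,X_2] = -(u_1\wedge u_2)\wedge x - (\lambda_1+\lambda_2)(u_1\wedge u_2)$; combining the iterated bracket $[X_1,[X_1,X_2]]$ with linear combinations of $X_1$ and $X_2$ produces, whenever $\lambda_1$ or $\lambda_2$ is nonzero, a pure translation Killing field, which again forces $\mathcal{S}$ to be a plane by the argument of Case~2. Hence $\lambda_1 = \lambda_2 = 0$, both symmetries are pure rotations about intersecting axes, and the subgroup they generate in the stabilizer of $p_0$ is the full $\mathrm{SO}(3)$ about $p_0$; the invariance of $\mathcal{S}$ and its connectedness identify $\mathcal{S}$ with a sphere centered at $p_0$. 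Case~4: $\Delta_1, \Delta_2$ are skew. Letting $n$ be the unit direction of their common perpendicular (so $u_1, u_2 \perp n$ and $u_1 \wedge u_2 = \pm(\sin\alpha)\,n$, with $\alpha$ the angle between the axes), a computation shows $[X_1,X_2]$ is a helical Killing field around the axis $\Delta_3 = \mathbb{R}\,n$ through the foot of the common perpendicular on $\Delta_1$; this axis also meets $\Delta_2$ at the foot of the common perpendicular on $\Delta_2$. Applying Case~3 to the intersecting pairs $(\Delta_1,\Delta_3)$ and $(\Delta_2,\Delta_3)$ forces all shifts (including the new one) to vanish and $u_1 \perp u_2$; but then $\mathcal{S}$ would have to be simultaneously a sphere centered at the foot on $\Delta_1$ and a sphere centered at the foot on $\Delta_2$, two distinct points, which is impossible. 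This rules out Case~4.

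Combining the four cases, only Cases~1 and~3 are compatible with the hypothesis, yielding respectively a cylinder of revolution and a sphere. The hard part is the precise bookkeeping in the nested bracket computations of Cases~3 and~4, and especially the verification that a nonzero shift always produces a pure translation Killing field in the generated algebra; a more geometric alternative, closer in spirit to Lemma~\ref{LEMMECylinGenerSymetHelic}, would follow the orbits (helices, circles, lines) of each one-parameter subgroup and check directly that they cannot fit on a single $\mathcal{C}^1$ connected surface other than a sphere or a cylinder of revolution.
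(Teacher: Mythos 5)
Your proposal rests on the same engine as the paper's proof --- both exploit the fact that the Lie bracket of the two helical Killing fields must again be tangent to $\mathcal{S}$ --- but you run that engine very differently. The paper normalizes coordinates once (first axis $=\mathrm{Span}(e_3)$, second axis through $(\rho,0,0)$ with direction $(0,\cos\theta,\sin\theta)$), computes $[F_1,F_2]$ explicitly, imposes the single coplanarity condition $[F_1,F_2]\cdot(F_1\wedge F_2)=0$, restricts it to one helix contained in $\mathcal{S}$ (which exists because $\mathcal{S}$ is not a plane), and solves the resulting polynomial--trigonometric system for the parameters $(p,\rho,\theta,\lambda)$; the case analysis happens at the level of that coefficient system. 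You instead generate the Lie subalgebra $\mathfrak{g}\subset\mathfrak{se}(3)$ spanned by iterated brackets and split on the relative position of the two axes (equal, parallel, intersecting, skew). Your four cases are geometrically sound: the bracket formulas you quote are correct (for $X_i(x)=u_i\wedge(x-y_i)+\lambda_i u_i$ one has $[X_1,X_2](x)=(u_2\wedge u_1)\wedge x + u_2\wedge b_1 - u_1\wedge b_2$ with $b_i=\lambda_i u_i-u_i\wedge y_i$, which yields exactly your Cases 2--4, and in Case 4 the bracket is indeed a screw field about the common perpendicular, reducing to Case 3 twice with incompatible sphere centers). What each approach buys: the paper's is self-contained and needs only the first bracket, at the price of a heavy computation; yours makes the mechanism transparent and gives cleaner geometric conclusions, at the price of the foundational fact that the whole generated group preserves $\mathcal{S}$ (for a merely $\mathcal{C}^1$ but closed surface this is best argued at the group level, via the closure of the group generated by the two flows). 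The one step you defer --- that in Case 3 a nonzero shift forces a pure translation into $\mathfrak{g}$ --- is true and closes more cleanly than by nested brackets: the rotation parts of $X_1,X_2,[X_1,X_2]$ already span $\mathfrak{so}(3)$, so $\mathfrak{g}$ surjects onto $\mathfrak{so}(3)$; the translation part $\mathfrak{g}\cap\mathbb{R}^3$ is invariant under these rotation parts acting irreducibly on $\mathbb{R}^3$, hence is $\{0\}$ or all of $\mathbb{R}^3$. In the latter case the orbit of a point is open in $\mathbb{R}^3$, impossible for a surface; in the former, $\mathfrak{g}$ is a splitting of $\mathfrak{so}(3)$ in $\mathfrak{se}(3)$, hence (Whitehead) the rotation algebra about a single point $q$, and writing $X_i(q)=u_i\wedge(q-y_i)+\lambda_i u_i=0$ with the two summands orthogonal forces $\lambda_i=0$ and $q\in\Delta_1\cap\Delta_2$, giving the sphere. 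With that insertion your argument is complete; the stray claim ``$u_1\perp u_2$'' in Case 4 is not needed and can be dropped.
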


\begin{proof}
By assumption, $\mathcal{S}$ presents a first helical symmetry. Up to change the coordinates, there exists a real number $p \in \mathbb{R}$ such that $\mathcal{S}$ presents the helical symmetry of axis $\Delta_1:\text{Span}(e_3)$ and of shift $2\pi p$, where $e_3$ is the third vector of the canonical basis.\\
By assumption, $\mathcal{S}$ presents a second helical symmetry, and so, up to change again the coordinates, there exist three real numbers $\rho$, $\theta$ and $\lambda$ such that the axis of the second helical symmetry is:
\begin{align}
\Delta_2: \begin{pmatrix} \rho \\ 0 \\ 0 \end{pmatrix} + \text{Span} \Big( \begin{pmatrix} 0 \\ \cos\theta \\ \sin\theta \end{pmatrix} \Big)
\end{align}
and the shift of this second symmetry is $2\pi \lambda$.\\
\newline
Therefore, for any point of coordinates $(x,y,z)$ belonging to $\mathcal{S}$, the two vector fields:
\begin{align}
F_1:\begin{pmatrix} x \\ y \\ z \end{pmatrix} \mapsto \begin{pmatrix} 0 \\ 0 \\ 1 \end{pmatrix} \wedge \begin{pmatrix} x \\ y \\ z \end{pmatrix} + \begin{pmatrix} 0 \\ 0 \\ p \end{pmatrix} \hspace{3mm} \text{and} \hspace{3mm} F_2: \begin{pmatrix} x \\ y \\ z \end{pmatrix} \mapsto \begin{pmatrix} 0 \\ \cos\theta \\ \sin\theta \end{pmatrix} \wedge \begin{pmatrix} x - \rho \\ y \\ z \end{pmatrix} + \lambda \begin{pmatrix} 0 \\ \cos\theta \\ \sin\theta \end{pmatrix},
\end{align}
that is:
\begin{align}
F_1(x,y,z) = \begin{pmatrix} -y \\ x \\ p \end{pmatrix} \hspace{5mm} \text{and} \hspace{5mm} F_2(x,y,z) = \begin{pmatrix} \cos\theta z - \sin\theta y \\ \sin\theta(x-\rho) + \lambda \cos\theta \\ -\cos\theta(x-\rho) + \lambda\sin\theta \end{pmatrix}
\end{align}
are tangent to the surface at the point $(x,y,z)$. Therefore, the Lie bracket $[F_1,F_2]$ of $F_1$ and $F_2$ is also tangent to the surface at the point $(x,y,z)$. $[F_1,F_2]$ can be written in coordinates as:
\begin{align}
[F_1,F_2](x,y,z) &= \begin{pmatrix} 0 & -1 & 0 \\ 1 & 0 & 0 \\ 0 & 0 & 0 \end{pmatrix} \begin{pmatrix} \cos\theta z - \sin\theta y \\ \sin\theta(x-\rho) + \lambda\cos\theta \\ -\cos\theta(x-\rho) + \lambda\sin\theta \end{pmatrix} - \begin{pmatrix}0 & -\sin\theta & \cos\theta \\ \sin\theta & 0 & 0 \\ -\cos\theta & 0 & 0 \end{pmatrix}\begin{pmatrix}-y \\ x \\ p \end{pmatrix} \nonumber\\
&= \begin{pmatrix} \sin\theta \rho - \cos\theta (\lambda+p) \\ \cos\theta z \\ -\cos\theta y \end{pmatrix}.
\end{align}
The fact that $[F_1,F_2]$ is tangent to the surface at $(x,y,z)$ can be written as:
\begin{align}
[F_1,F_2](x,y,z) \cdot \left(F_1(x,y,z) \wedge F_2(x,y,z) \right) = 0,
\end{align}
or again, in coordinates:
\begin{align}
\label{EQUATProduScala[F1,F2].(F1^F2)}
&p\cos^2\theta x^2 + \cos\theta \left( \rho\cos\theta + (\lambda-p)\sin\theta\right) yz + p\cos^2\theta z^2 + \left( \rho\sin\theta - (\lambda+p)\cos\theta \right)\left( \rho\cos\theta + (\lambda-p)\sin\theta \right) x \nonumber\\
&\hspace{30mm}+ (-\cos\theta)\left(\rho\sin\theta - \lambda\cos\theta\right)(x^2+y^2) + p \left(\rho\sin\theta - (\lambda+p)\cos\theta \right)\left( \rho\sin\theta - \lambda\cos\theta \right) = 0.
\end{align}
$\mathcal{S}$ being a $\mathcal{C}^1$ surface, it cannot be entirely contained in the axis $\Delta_1$ of the first helical symmetry. Besides, since by assumption $\mathcal{S}$ was assumed not to be a plane, there exists three real numbers $r$, $t_0$ and $z_0$, with $r\neq 0$ and $z_0 \neq 0$ such that the helix described by the following parametric equation:
\begin{align}
\label{EQUATHelicDans___S__}
\left( r\cos(t-t_0),r\sin(t-t_0),z_0+pt\right) \hspace{3mm} \forall t \in \mathbb{R}
\end{align}
is contained in the surface $\mathcal{S}$. Let us observe that the plane $z=0$ corresponds to the plane, orthogonal to the first axis $\Delta_1$, for which the distance between this axis and the other axis $\Delta_2$ is minimal. The information that the surface $\mathcal{S}$ is not entirely contained in this plane will be used in a crucial manner in the following computation.\\
Replacing in the expression \eqref{EQUATProduScala[F1,F2].(F1^F2)} of the scalar product $[F_1,F_2] \cdot \left(F_1 \wedge F_2 \right)$ the variables $x$, $y$ and $z$ by \eqref{EQUATHelicDans___S__}, describing a family of points of the surface $\mathcal{S}$, we obtain:
\begin{align}
\label{EQUATProduScala[F1,F2].(F1^F2)__v2_}
&p\cos^2\theta r^2 \cos^2(t-t_0) + \cos\theta \left( \rho\cos\theta + (\lambda-p)\sin\theta\right) r \sin(t-t_0)(z_0+pt) + p\cos^2\theta (z_0+pt)^2 \nonumber\\
&\hspace{10mm} + \left( \rho\sin\theta - (\lambda+p)\cos\theta \right)\left( \rho\cos\theta + (\lambda-p)\sin\theta \right) r \cos(t-t_0) + (-\cos\theta)\left(\rho\sin\theta - \lambda\cos\theta\right)r^2 \nonumber\\
&\hspace{80mm} + p \left(\rho\sin\theta - (\lambda+p)\cos\theta \right)\left( \rho\sin\theta - \lambda\cos\theta \right) = 0.
\end{align}
\eqref{EQUATProduScala[F1,F2].(F1^F2)__v2_} is an equation of the form:
\begin{align}
c_1 t^2 + c_2 t \sin(t-t_0) + c_3 \cos^2(t-t_0) + c_4 t + c_5 \cos(t-t_0) + c_6 \sin(t-t_0) + c_7,
\end{align}
holding for all $t\in \mathbb{R}$, and where the coefficients $c_i$, with $1 \leq i \leq 7$, are fixed real numbers. Therefore, all these coefficients have to be zero (this can be seen by choosing infinitely many $t-t_0$ such that $\cos(t-t_0) = \pm 1$, so that $\sin(t-t_0) = 0$, and vice versa, providing four polynomial equations in $t$, which enables to conclude). We find therefore the system:
\begin{align}
\label{EQUATSysteCoeffProduScala[F1,F2].(F1^F2)}
\left\{
\begin{array}{rcl}
p^3 \cos^2\theta &=& 0,\\
rp\cos\theta\left(\rho\cos\theta + (\lambda-p)\sin\theta\right) &=& 0,\\
r^2p\cos^2\theta &=& 0,\\
2 z_0 p^2 \cos^2\theta &=& 0,\\
r \left( \rho\sin\theta - (\lambda+p)\cos\theta\right)\left(\rho\cos\theta + (\lambda-p)\sin\theta\right) &=& 0,\\
r z_0 \cos\theta \left(\rho \cos\theta + (\lambda-p)\sin\theta\right) &=& 0,\\
r^2(-\cos\theta)\left(\rho\sin\theta-\lambda\cos\theta\right) + p\left(\rho\sin\theta-(\lambda+p)\cos\theta\right)\left(\rho\sin\theta-\lambda\cos\theta\right) + z_0 p \cos^2\theta &=& 0.
\end{array}
\right.
\end{align}
We consider now the following cases to study the system \eqref{EQUATSysteCoeffProduScala[F1,F2].(F1^F2)}.\\
\newline
First, if $p=0$ (that is, $\mathcal{S}$ is a surface of revolution around the axis $\Delta_1$), \eqref{EQUATSysteCoeffProduScala[F1,F2].(F1^F2)} becomes:
\begin{align}
\left\{
\begin{array}{rcl}
\left( \rho\sin\theta - \lambda\cos\theta\right)\left(\rho\cos\theta + \lambda\sin\theta\right) &=& 0,\\
\cos\theta \left(\rho \cos\theta + \lambda\sin\theta\right) &=& 0,\\
(-\cos\theta)\left(\rho\sin\theta-\lambda\cos\theta\right) &=& 0.
\end{array}
\right.
\end{align}
If in addition $\cos\theta = 0$, we find:
\begin{align}
\rho\lambda \sin^2\theta = \rho\lambda = 0.
\end{align}
If $\rho = 0$, the two axis $\Delta_1$ and $\Delta_2$ are the same, and then $\mathcal{S}$ is a surface of revolution around $\Delta_1$, as well as a surface presenting an helical symmetry around the same axis, with a non trivial shift (because we assumed that $\mathcal{S}$ presents two distinct helical symmetries). Therefore $\mathcal{S}$ is a cylinder of revolution.\\
If on the contrary $\rho \neq 0$, then $\lambda = 0$, but then $\mathcal{S}$ would be a surface of revolution, around two distinct axes $\Delta_1$ and $\Delta_2$ that are parallel. In such a case, $\mathcal{S}$ cannot be a $\mathcal{C}^1$ surface, this case leads to a contradiction.\\
Finally, if we still assume $p=0$, but $\cos\theta \neq 0$, \eqref{EQUATSysteCoeffProduScala[F1,F2].(F1^F2)} becomes:
\begin{align}
\left\{
\begin{array}{rcl}
\rho \cos\theta + \lambda \sin\theta &=& 0,\\
\rho\sin\theta - \lambda \cos\theta &=& 0.
\end{array}
\right.
\end{align}
This last system implies $\rho = \lambda = 0$. In this case, $\mathcal{S}$ is a surface a revolution around two different axes that intersect each other. $\mathcal{S}$ is therefore a sphere.\\
\newline
Second, if $p \neq 0$, the first line of \eqref{EQUATSysteCoeffProduScala[F1,F2].(F1^F2)} implies that
\begin{align}
\cos\theta = 0,
\end{align}
and so the system becomes:
\begin{align}
\left\{
\begin{array}{rcl}
\cos\theta &=& 0,\\
\rho\sin\theta (\lambda-p)\sin\theta &=& 0,\\
\rho^2\sin^2\theta &=& 0.
\end{array}
\right.
\end{align}
We deduce in particular that $\rho = 0$. In this case, $\mathcal{S}$ is a surface presenting two different helical symmetries, around the same axis. Therefore, the shifts of these two helical symmetries are different, and so $\mathcal{S}$ is a cylinder of revolution.\\
\newline
Since we studied all the possible cases, the proof of Lemma \ref{LEMMEDoublSymetHelic} is complete.
\end{proof}

\noindent
We are now in position to state the main result of this section.

\begin{theor}[Characterization of the local Maxwellians solving the free transport equation in a domain with specular reflection boundary condition (SRBC), case $d=3$]
\label{THEORChptrLgTBhSpecuRefleBCd=3}
Let the dimension $d$ be equal to $3$ Let $T$ be a strictly positive number (possibly $+\infty$), let $\Omega$ be a regular, connected open set of $\mathbb{R}^3$ and let $m$ be a local Maxwellian  that is, a function of the form \eqref{EQUATSS2.1ExpreMaxweLocGn}, which solves the transport equation \eqref{EQUATSect2Transport_Libre} on $[0,T]\times\Omega\times\mathbb{R}^2$, with the specular reflection boundary condition (SRBC), that is, such that
\begin{align}
\forall t \in [0,T], x \in \partial \Omega, v \in \mathbb{R}^d,\hspace{3mm} m(t,x,v) = m(t,x,v'),
\end{align}
where
\begin{align}
v' = v - 2\left( v\cdot n(x) \right) n(x)
\end{align}
and with $n(x)$ a unitary normal vector to $\partial\Omega$ at $x$.\\
Then,
\begin{itemize}
\item if $\Omega$ is a half-space, with a boundary of the form $\partial\Omega = \{x_0 n + \lambda \tau\ /\ \lambda \in \mathbb{R}, \tau \in \mathbb{S}^2, \tau\cdot n = 0\}$, with $x_0 \in \mathbb{R}$, $n\in \mathbb{S}^2$ (that is, $x_0 n$ is the closest point of boundary $\partial\Omega$ to the origin, and $\partial\Omega$ is normal to the unit vector $n$), there exist seven real numbers $r_0>0$, $\alpha>0$, $\beta$, $\gamma>0$, $a$, $\ell_1$ ,$\ell_2 \in \mathbb{R}$ and two unit vectors $\tau_1,\tau_2 \in \mathbb{S}^2$ orthogonal to $n$ such that:
\begin{align}
\label{EQUATTheorMaxweSRBC__d=3_1Plan}
\hspace{-5mm} m(t,x,v) &= r_0 \exp\big( - \alpha (x-tv-2x_0 n)\cdot(x-tv) + \beta (x-tv-x_0n)\cdot v - \gamma \vert v \vert^2 +2\left(a n \wedge x \right)\cdot v\nonumber\\
&\hspace{100mm} -2 \ell_1\tau_1\cdot(x-tv) + 2 \ell_2\tau_2\cdot v\big) \hspace{5mm} \nonumber\\
&\hspace{90mm}\forall t \in [0,T], x \in \partial \Omega, v \in \mathbb{R}^d,
\end{align}
\item if $\Omega$ is the volume contained between two parallel planes, with a boundary of the form $\partial\Omega = \{x_1 n + \lambda \tau_1\ /\ \lambda \in \mathbb{R}, \tau \in \mathbb{S}^2, \tau_1\cdot n = 0\} \cup \{ x_2n + \mu \tau_2\ /\ \mu \in \mathbb{R}, \tau_2\in\mathbb{S}^2, \tau_2\cdot n = 0\}$, with $x_0 \in \mathbb{R}$, $n \in \mathbb{S}^1$, there exist five real numbers $r_0 > 0$, $\gamma>0$, $a$, $\ell_1$, $\ell_2 \in \mathbb{R}$ and two unit vectors $\tau_1,\tau_2\in\mathbb{S}^2$ orthogonal to $n$ such that:
\begin{align}
\label{EQUATTheorMaxweSRBC__d=3_2Plan}
\hspace{-6mm} m(t,x,v) &= r_0 \exp\left( - \gamma \vert v \vert^2 + 2\left(an \wedge x\right)\cdot v - 2 \ell_1\tau_1\cdot(x-tv) + 2\ell_2\tau_2\cdot v \right) \hspace{3mm} \forall t \in [0,T], x \in \partial \Omega, v \in \mathbb{R}^d,
\end{align}
\item if $\Omega$ is the volume contained in a cylinder of revolution of affine axis $y + \text{Span}(n)$, or the complement of such a volume, or the volume contained between two cylinders of revolution with the same affine  axis $y + \text{Span}(n)$, with $y \in \mathbb{R}^3$ and $n \in \mathbb{S}^2$, there exist five real numbers $r_0 > 0$, $\gamma > 0$, $a$, $\ell_1$ and $\ell_2$, and a skew-symmetric matrix $\Lambda_0$ such that:
\begin{align}
\label{EQUATTheorMaxweSRBC__d=3_Cylin}
m(t,x,v) &= r_0 \exp\big( - \gamma \vert v \vert^2 + 2a\left(n \wedge(x-y)\right)\cdot v - 2 \ell_1 n\cdot(x-tv) + 2 \ell_2 n \cdot v \big) \nonumber\\
&\hspace{90mm} \forall t \in [0,T], x \in \partial \Omega, v \in \mathbb{R}^d,
\end{align}
\item if $\Omega$ is a sphere centered on $y \in \mathbb{R}^3$, there exist three real numbers $r_0 > 0$, $\gamma > 0$, $a$, and a unit vector $n \in \mathbb{S}^2$ such that:
\begin{align}
\label{EQUATTheorMaxweSRBC__d=3_Spher}
m(t,x,v) &= r_0 \exp\big( - \gamma \vert v \vert^2 + 2a\left(n \wedge (x-y)\right)\cdot v\big) \hspace{5mm} \forall t \in [0,T], x \in \partial \Omega, v \in \mathbb{R}^d,
\end{align}
\item if $\partial\Omega$ presents an helical symmetry of axis $y + \text{Span}(n)$ and of shift $2\pi p$ with $y \in \mathbb{R}^3$, $n \in \mathbb{S}^2$ and $p \in \mathbb{R}$, but such that $\partial\Omega$ is not a sphere nor a generalized cylinder, there exist three real numbers $r_0>0$, $\gamma>0$, $a \neq 0$ and a vector $w \in \mathbb{R}^3$ orthogonal to $n$ such that:
\begin{align}
\label{EQUATTheorMaxweSRBC__d=3_Helic}
m(t,x,v) &= r_0 \exp\left( -\gamma \vert v \vert^2 + 2 a \left(n\wedge(x-y) + pn\right)\cdot v\right) \hspace{5mm} \forall t \in [0,T], x \in \partial \Omega, v \in \mathbb{R}^d,
\end{align}
\item if $\partial\Omega$ is a generalized cylinder of direction $n$ with $n \in \mathbb{S}^2$, but such that $\partial\Omega$ does not present an helical symmetry, there exist four real numbers $r_0 > 0$, $\gamma > 0$, $\ell_1$, $\ell_2$ such that:
\begin{align}
\label{EQUATTheorMaxweSRBC__d=3_GnCyl}
m(t,x,v) &= r_0 \exp\big( - \gamma \vert v \vert^2  - 2 \ell_1 n\cdot(x-tv) + 2 \ell_2 n \cdot v \big) \hspace{5mm} \forall t \in [0,T], x \in \partial \Omega, v \in \mathbb{R}^d,
\end{align}
\item if finally $\partial\Omega$ is not a generalized cylinder and does not present an helical symmetry, then there exist two strictly positive real numbers $r_0 > 0$ and $\gamma > 0$ such that:
\begin{align}
\label{EQUATTheorMaxweSRBC__d=3_Gener}
m(t,x,v) &= r_0 \exp\big( - \gamma \vert v \vert^2  \big) \hspace{5mm} \forall t \in [0,T], x \in \partial \Omega, v \in \mathbb{R}^d.
\end{align}
\end{itemize}
\end{theor}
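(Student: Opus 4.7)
The plan is to follow the same general strategy as in the proof of Theorem \ref{THEORChptrLgTBhSpecuRefle_B_C_}, exploiting the fact that equation \eqref{EQUATChptrLgTBhCond_u(tx)} is a polynomial identity in $t$ on the non-trivial interval $[0,T]$. Decomposing \eqref{EQUATChptrLgTBhCond_u(tx)} in powers of $t$, we obtain, for every $x \in \partial\Omega$, the two conditions
\begin{subequations}
\begin{empheq}[left=\empheqlbrace]{align}
\left( \alpha x + w_1 \right) \cdot n(x) &= 0, \\
\left( \Lambda_0 x + \tfrac{\beta}{2} x + w_2 \right) \cdot n(x) &= 0.
\end{empheq}
\end{subequations}
Each of these conditions means that the corresponding vector field is tangent to $\partial\Omega$, so that the integral curves of the associated autonomous ODEs \eqref{EQUATChptrLgTBhSRODEAlpha} and \eqref{EQUATChptrLgTBhSRODELambd} starting from a point of $\partial\Omega$ remain in $\partial\Omega$. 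Lemmas \ref{LEMMEChptrLgTBhSRODEAlpha} and \ref{LEMMEChptrLgTBhSRODELambd} give explicit forms for these curves.

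The first step is to analyse the first condition. If $\alpha \neq 0$, the integral curves are half-lines emanating from the focal point $-w_1/\alpha$, and drawing such a line through every boundary point forces, by regularity, $\partial\Omega$ to be an affine plane containing $-w_1/\alpha$. If $\alpha = 0$ and $w_1 \neq 0$, the integral curves are full straight lines parallel to $w_1$, so that $\partial\Omega$ is a generalized cylinder of direction $w_1$. If $\alpha = 0$ and $w_1 = 0$, the first condition is empty. The second condition is analysed via Lemma \ref{LEMMEChptrLgTBhSRODELambd}. If $\Lambda_0 \neq 0$ and $\beta \neq 0$, the integral curves are logarithmic spirals, whose accumulation at the center $-y$ is incompatible with $\partial\Omega$ being a $\mathcal{C}^1$ surface; hence this case is excluded. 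If $\Lambda_0 \neq 0$ and $\beta = 0$, the curves are helices of axis $y + \mathrm{Span}(u)$ and shift $2\pi\lambda$ (the shift vanishing when $w_2 \perp u$ gives circles), so that $\partial\Omega$ presents a helical (or rotational) symmetry. If $\Lambda_0 = 0$ and $\beta \neq 0$, $\partial\Omega$ is a plane through $-2w_2/\beta$. If $\Lambda_0 = 0$, $\beta = 0$ and $w_2 \neq 0$, $\partial\Omega$ is a generalized cylinder of direction $w_2$.

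The second step is to cross the two lists. Each non-trivial branch identifies the coefficients that are forced to vanish in the expression \eqref{EQUATSS2.1ExpreMaxweLocGn} of $m$ (exactly as in the planar case, a half-space boundary kills nothing, a slab kills $\alpha$, a helical symmetry kills $\beta$ and $\alpha$, etc.), and conversely fixes a remaining degree of freedom that must respect the boundary's direction $n$ or axis. The main obstacle is to handle the interactions between the two conditions when both are non-trivial: in that case $\partial\Omega$ inherits two independent geometric invariances. The key tools are Lemmas \ref{LEMMECylinGenerSymetHelic} and \ref{LEMMEDoublSymetHelic}: if $\partial\Omega$ is simultaneously a generalized cylinder and admits a helical symmetry, it is a plane or a cylinder of revolution; if $\partial\Omega$ admits two distinct helical symmetries and is not a plane, it is a cylinder of revolution or a sphere. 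Combined with the planarity conclusions produced by the cases $\alpha \neq 0$ or ($\Lambda_0 = 0$, $\beta \neq 0$), this yields exactly the seven geometric alternatives of the statement (half-space, slab, interior/exterior/between two coaxial cylinders of revolution, sphere, surface with a single helical symmetry, generalized cylinder without helical symmetry, generic boundary).

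The third step is, for each geometric case, to reinsert into \eqref{EQUATSS2.1ExpreMaxweLocGn} the coefficients that survive and to identify the resulting free parameters with those appearing in \eqref{EQUATTheorMaxweSRBC__d=3_1Plan}--\eqref{EQUATTheorMaxweSRBC__d=3_Gener}. The half-space and slab cases are obtained exactly as in the proof of Theorem \ref{THEORChptrLgTBhSpecuRefle_B_C_}, with the novelty that in dimension $3$ a skew-symmetric $\Lambda_0$ with axis normal to the boundary survives, producing the term $2(an\wedge x)\cdot v$. The cylindrical cases correspond to $\Lambda_0 \neq 0$ with axis $y + \mathrm{Span}(n)$, $\beta = 0$, and a residual translation along $n$ surviving from $w_1, w_2$; the spherical case corresponds to two distinct helical symmetries which, by Lemma \ref{LEMMEDoublSymetHelic}, leave only a pure rotational term $a(n\wedge(x-y))\cdot v$; the generic helical case keeps a non-trivial shift $p$ inherited from the decomposition \eqref{EQUATExpreSolut_EDO_LambdBet=0}; the generalized cylinder case without helical symmetry keeps only translation freedom along $n$; and the fully generic case forces all coefficients other than $r_0$ and $\gamma$ to vanish. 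The hardest part is the careful bookkeeping of which among $\alpha$, $\beta$, $w_1$, $w_2$, $\Lambda_0$ can be non-zero simultaneously and how they are constrained by the axis/direction of the boundary; this is where Lemmas \ref{LEMMECylinGenerSymetHelic} and \ref{LEMMEDoublSymetHelic} are used in an essential way.
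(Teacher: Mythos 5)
Your overall strategy is the one the paper follows: split \eqref{EQUATChptrLgTBhCond_u(tx)} into its two coefficients in $t$, read each as a tangency condition, integrate along the flows of Lemmas \ref{LEMMEChptrLgTBhSRODEAlpha} and \ref{LEMMEChptrLgTBhSRODELambd}, and then cross the resulting geometric constraints using Lemmas \ref{LEMMECylinGenerSymetHelic} and \ref{LEMMEDoublSymetHelic}. However, there is one step that fails, and it is not a cosmetic one. You claim that the case $\Lambda_0 \neq 0$ and $\beta \neq 0$ is \emph{excluded} because the integral curves are logarithmic spirals accumulating at $-y$, which you say is incompatible with $\partial\Omega$ being $\mathcal{C}^1$. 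That argument is valid in dimension $2$, where $\partial\Omega$ is a curve and a logarithmic spiral admits no tangent line at its centre; it is \emph{not} valid in dimension $3$, where $\partial\Omega$ is a surface. A plane is a perfectly regular $\mathcal{C}^1$ surface that contains logarithmic spirals through each of its points, all with the same centre. The correct conclusion in $d=3$ (and the one the paper draws) is that the spirals are inscribed in the plane $\mathcal{P}$ through $-y$ orthogonal to the vector $z$ representing $\Lambda_0$, hence every $x_0\in\partial\Omega$ lies in $\mathcal{P}$ and $\partial\Omega$ \emph{is} that plane. This case therefore survives and is precisely what produces, in the half-space alternative \eqref{EQUATTheorMaxweSRBC__d=3_1Plan}, Maxwellians in which both $\beta$ and the rotational parameter $a$ (i.e.\ $\Lambda_0 = a\, n\wedge\cdot$) are simultaneously non-zero. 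With your exclusion, the classification would wrongly omit this subfamily, so the first bullet of the theorem could not be established as stated.

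A secondary remark: your third step, where the surviving parameters are matched against the seven families \eqref{EQUATTheorMaxweSRBC__d=3_1Plan}--\eqref{EQUATTheorMaxweSRBC__d=3_Gener}, is only announced, not carried out. This is where the paper does most of its work (determining, case by case, which components of $w_1$ and $w_2$ along or orthogonal to $n$ survive, and why $z$ must be colinear to the axis or normal of the boundary), and where the plane case in particular requires the invertibility of $\Lambda_0 + \frac{\beta}{2}I_3$ restricted to $n^\perp$ to parametrize $w_2$. As written, your proposal identifies the right geometric alternatives (modulo the error above) but does not yet prove the explicit forms of $m$ in each of them.
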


\begin{remar}
The case when the boundary $\partial\Omega$ of the domain is a plane is the ``most symmetric'' case. Indeed, the domain is not only invariant by translations parallel to the plane or by rotations around the normal lines to the plane, but it is also invariant by dilatations.\\
As a consequence, we obtain in this case a large choice of local Maxwellians that solve the Boltzmann equation with the specular reflection boundary condition. \eqref{EQUATTheorMaxweSRBC__d=3_1Plan} describes a family of local Maxwellians depending on $9$ real parameters.
\end{remar}

\begin{remar}
In the literature (\cite{Desv990}, \cite{ReVi008}), the result presented in Theorem \ref{THEORChptrLgTBhSpecuRefleBCd=3} is restricted to the case when the domain $\Omega$ is bounded. Therefore, the three first cases ($\partial\Omega$ being the union of one or two planes, or of one or two coaxial cylinders) and the sixth case ($\partial\Omega$ is a generalized cylinder) are by definition not considered. In addition, among all the possible helical symmetries, only the case of the rotational symmetry can be considered if $\Omega$ is bounded. In the case when $\partial\Omega$ is bounded, only three possible cases remain, in agreement with the result presented in \cite{Desv990}.
\end{remar}

\begin{proof}[Proof of Theorem \ref{THEORChptrLgTBhSpecuRefleBCd=3}]
As for the $2$-dimensional case, we start from the expression \eqref{EQUATSS2.1ExpreMaxweLocGn} of a local Maxwellian that solves the free transport equation, and the condition \eqref{EQUATChptrLgTBhCond_u(tx)}, that expresses the specular reflection boundary condition. As for the $2$-dimensional case, we obtain the same system \eqref{EQUATChptrLgTBhSyst_u(tx)} on the coefficients $\alpha$, $w_1$, $\Lambda_0$, $\beta$ and $w_2$ of the local Maxwellian $m$.\\
\newline
We start with the first equation \eqref{EQUATChptrLgTBhSystu(tx)1}. If $\alpha \neq 0$, by Lemma \ref{LEMMEChptrLgTBhSRODEAlpha}, the boundary $\partial\Omega$ contains all the half lines of starting point $-w_1/\alpha$ and through $x_0$, for all $x_0 \in \partial\Omega$. Since $\partial\Omega$ is a $\mathcal{C}^1$ surface, $\partial\Omega$ is a plane through the point $-w_1/\alpha$, and therefore $\Omega$ is a half-space.\\
If on the contrary $\alpha = 0$, and if $w_1 \neq 0$, then $\partial\Omega$ is the union of a family of straight lines, all parallel to $w_1$. In other words, $\partial\Omega$ is a generalized cylinder of direction $w_1$.\\
\newline
We now turn to the second equation \eqref{EQUATChptrLgTBhSystu(tx)2}. In a first time, we assume that $\Lambda_0 \neq 0$, and $\beta \neq 0$. Then, from Lemma \ref{LEMMEChptrLgTBhSRODELambd}, for any $x_0 \in \partial\Omega$, $\partial\Omega$ contains a spiral through $x_0$ and inscribed in the plane $\mathcal{P} = \mathcal{P}(\Lambda_0,w_2)$, where $\mathcal{P}$ is the plane through $-y$ and orthogonal to $z$, where $y$ is the preimage of $w_2$ by $\Lambda_0 + \frac{\beta}{2}I_3$ and $z \in \mathbb{R}^3$ is the unique non-zero vector such that $\Lambda_0 x = z \wedge x$ for all $x \in \mathbb{R}^3$. Therefore the point $x_0$ belongs also to the plane $\mathcal{P}$, so that we deduce $\partial\Omega \subset \mathcal{P}$. Since the only $\mathcal{C}^1$ surface contained in a plane is the plane itself, we deduce that $\partial\Omega$ is the plane through $-y$ and orthogonal to $z$.\\
If now $\Lambda_0 \neq 0$ and $\beta = 0$, Lemma \ref{LEMMEChptrLgTBhSRODELambd} provides that $\partial\Omega$ is a surface that presents an helical symmetry, of axis orientated by $z$ (where $\Lambda_0x = z\wedge x \hspace{2mm} \forall x \in\mathbb{R}^3$), and of shift determined by $w_2$.\\
If $\Lambda_0 = 0$ and $\beta \neq 0$, applying Lemma \ref{LEMMEChptrLgTBhSRODEAlpha} we deduce that $\partial\Omega$ is a plane through $-2w_2/\beta$, and therefore that $\Omega$ is a half-space.\\
Finally, if $\Lambda_0 = 0$, $\beta = 0$ and $w_2 \neq 0$, $\partial\Omega$ is a generalized cylinder of direction $w_2$.\\
\newline
Therefore, according to the previous discussion, and the general results of Lemmas \ref{LEMMECylinGenerSymetHelic} and \ref{LEMMEDoublSymetHelic}, we separate the following cases: either $\partial\Omega$ presents no helical symmetry and is not a generalized cylinder, or it is a generalized cylinder without an helical symmetry, or it presents an helical symmetry without being a generalized cylinder, or finally it is a generalized cylinder presenting an helical symmetry, in which case $\partial\Omega$ is either the union of one or two parallel planes, or the union of one or two coaxial cylinders. In addition, when $\partial\Omega$ presents an helical symmetry without being a general cylinder, we will separate the cases when $\partial\Omega$ presents two different helical symmetries or not. In the end, we consider the following cases, that exclude each other, and that cover all the possible situations:
\begin{itemize}
\item $\partial\Omega$ is a single plane,
\item $\partial\Omega$ is the union of two parallel planes,
\item $\partial\Omega$ is the union of one or two coaxial cylinders of revolution,
\item $\partial\Omega$ is a sphere,
\item $\partial\Omega$ is a surface that presents an helical symmetry, but is not a sphere nor a generalized cylinder,
\item $\partial\Omega$ is a generalized cylinder which does not present an helical symmetry,
\item $\partial\Omega$ does not present an helical symmetry and is not a generalized cylinder neither.
\end{itemize}

\noindent
\underline{$\partial\Omega$ is a plane.} Let us assume first that $\partial\Omega$ is a single plane of the form $\{x_0n+\lambda\tau\ /\ \lambda\in\mathbb{R}, \tau\in\mathbb{S}^2, \tau\cdot n = 0\}$, for a certain $x_0 \in \mathbb{R}^3$ that belongs to $\partial\Omega$, and $n \in \mathbb{S}^2$ being normal to $\partial\Omega$.\\
Then, if $\alpha\neq 0$, $-w_1/\alpha \in \partial\Omega$, so that:
\begin{align}
\label{EQUATExpre_w_1_1Plan}
w_1 = - \alpha x_0 n + \ell_1\tau_1,
\end{align}
for some $\ell_1 \in \mathbb{R}$ and $\tau_1 \in \mathbb{S}^2$ with $\tau_1\cdot n = 0$. If on the contrary $\alpha = 0$ and $w_1 \neq 0$, then $w_1$ has to be orthogonal to $n$, hence the general expression \eqref{EQUATExpre_w_1_1Plan} for $w_1$.\\
If $\Lambda_0\neq 0$ and $\beta \neq 0$, we write $\Lambda_0 x = z\wedge x$ for all $x\in\mathbb{R}^3$ with $z\in\mathbb{R}^3$. According to Lemma \ref{LEMMEChptrLgTBhSRODELambd}, $\partial\Omega$ contains a spiral inscribed in a plane $\mathcal{P}$ normal to $z$, and such that $-y$ belongs to the spiral with $\left[ \Lambda_0 + \frac{\beta}{2}I_3\right]y = w_2$. Therefore, we have $z = a n$ for some $a \in \mathbb{R}$, $a\neq 0$, and there exist $\lambda \in \mathbb{R}$ and $\tau_y \in \mathbb{S}^2$ with $\tau_y\cdot n = 0$ such that:
\begin{align}
y = -x_0n + \lambda \tau_y.
\end{align}
As a consequence, we have necessarily:
\begin{align}
w_2 = a\lambda n\wedge \tau_y - \frac{\beta}{2}x_0n + \frac{\beta}{2}\lambda\tau.
\end{align}
Since the matrix $\Lambda_0 + \frac{\beta}{2}I_3$ restricted to $n^\perp$ is invertible, $w_2$ can be written in general as:
\begin{align}
w_2 = -\frac{\beta}{2}x_0 n + \ell_2\tau_2,
\end{align}
for some $\ell_2 \in \mathbb{R}$ and $\tau_2 \in \mathbb{S}^2$ such that $\tau_2\cdot n = 0$.\\
If now $\Lambda_0 \neq 0$ (represented as a vector $z \in \mathbb{R}^3$, $z\neq0$) and $\beta=0$, then according to Lemma \ref{LEMMEChptrLgTBhSRODELambd}, $\partial\Omega$ contains an helix of axis $-y + \text{Span}(z)$, and of shift $2\pi \frac{\lambda}{\vert z \vert}$, with
\begin{align}
w_2 = \lambda \frac{z}{\vert z \vert} + \Lambda_0 y.
\end{align}
Since $\partial\Omega$ is a plane normal to $n$, then necessarily $\lambda = 0$, and there exist $a\in\mathbb{R}$, $a\neq 0$, $\ell_2 \in \mathbb{R}$ and $\tau_2\in\mathbb{S}^2$, $\tau_2\cdot n = 0$, such that:
\begin{align}
z = an,\hspace{5mm} \text{and} \hspace{5mm} w_2 = -\Lambda_0 y = \ell_2\tau_2.
\end{align}
If $\Lambda_0 = 0$ and $\beta \neq 0$, then, according to Lemma \ref{LEMMEChptrLgTBhSRODEAlpha}, $-2\frac{w_2}{\beta}$ belongs to $\partial\Omega$, that is, there exist $\ell_2 \in \mathbb{R}$ and $\tau_2\in\mathbb{S}^2$, $\tau_2\cdot n = 0$, such that:
\begin{align}
w_2 = - \frac{\beta}{2}x_0n + \ell_2\tau_2.
\end{align}
Finally, if $\Lambda_0 = 0$, $\beta = 0$ and $w_2 \neq 0$, then $\partial\Omega$ is a generalized cylinder of direction $w_2$, and so $w_2$ is necessarily orthogonal to $n$.\\
We conclude therefore that if $\partial\Omega$ is a single plane, only the local Maxwellians of the form \eqref{EQUATTheorMaxweSRBC__d=3_1Plan} solve the Boltzmann equation with specular reflection boundary condition.\\
\newline
\underline{$\partial\Omega$ is the union of two parallel planes.} In this case, there exist by assumption one unit vector $n\in\mathbb{S}^2$ and two different real numbers $x_1\neq x_2$ such that:
\begin{align}
\partial\Omega = \{x_1n + \ell_1\tau_1\ /\ \ell_1\in\mathbb{R}, \tau_1\in\mathbb{S}^2, \tau_1\cdot n = 0\} \cup \{x_2n + \ell_2\tau_2\ /\ \ell_2\in\mathbb{R}, \tau_2\in\mathbb{S}^2, \tau_2\cdot n = 0\}.
\end{align}
In this case, according to Lemma \ref{LEMMEChptrLgTBhSRODEAlpha}, $\alpha = 0$ (because $\partial\Omega$ is not composed of a single plane, so $\alpha \neq 0$ leads to a contradiction). If $w_1 \neq 0$, then $w_1$ is necessarily orthogonal to $n$.\\
If $\Lambda_0 \neq 0$ and $\beta \neq 0$, the same  conclusion holds as in the case when $\partial\Omega$ is a single plane. In particular, we obtain a contradiction, since the component of $w_2$ along the direction $n$ should be $-\frac{\beta}{2}x_1$ and $-\frac{\beta}{2}x_2$, but $x_1 \neq x_2$.\\
Similarly, if $\Lambda_0 = 0$ and $\beta \neq 0$, $\partial\Omega$ should be a single plane through $-2\frac{w_2}{\beta}$, which is not the case.\\
We deduce that $\beta = 0$.\\
If $\Lambda_0 \neq 0$, then, according to Lemma \ref{LEMMEChptrLgTBhSRODELambd}, $\partial\Omega$ presents an helical symmetry of axis orientated by $z$ (where $z$ represents $\Lambda_0$). We deduce therefore that $z = a n$, for $a \in \mathbb{R}$, $a\neq 0$. In addition, $\partial\Omega$ being the union of two plans, the shift of this helical symmetry has to be zero, therefore $w_2 = \ell_2\tau_2$, for some $\ell_2 \in \mathbb{R}$, $\tau_2 \in \mathbb{S}^2$ and $\tau_2\cdot n = 0$.\\
If finally $\Lambda_0 = 0$ and $\beta = 0$, $\partial\Omega$ is a generalized cylinder of direction $w_2$, therefore $w_2$ has to be orthogonal to $n$.\\
In summary, we deduce that if $\partial\Omega$ is the union of two different, parallel planes, then only the local Maxwellians of the form \eqref{EQUATTheorMaxweSRBC__d=3_2Plan} solve the Boltzmann equation with the boundary condition \eqref{EQUATChptrLgTBhSpecuRefle_B_C_}.\\
\newline
\underline{$\partial\Omega$ is the union of one or two coaxial cylinders of revolution.} Let us assume that the two cylinders of revolution have the common axis of rotation $y + \text{Span}(n)$, for $y \in\mathbb{R}^3$ and $n \in \mathbb{S}^2$.\\
The following cases lead to a direct contradiction:
\begin{align}
\alpha \neq 0,\hspace{20mm} [\hspace{1mm} \Lambda_0 \neq 0 \hspace{1.5mm} \text{and} \hspace{1.5mm} \beta \neq 0\hspace{1mm}],\hspace{20mm} [\hspace{1mm}\Lambda_0 = 0 \hspace{1.5mm}\text{and}\hspace{1.5mm} \beta \neq 0\hspace{1mm}].
\end{align}
In particular, we deduce that $\alpha = \beta = 0$.\\
In the case when $w_1 \neq 0$, we deduce that $w_1$ has to be colinear to $n$. If $\Lambda_0 \neq 0$, represented by the vector $z$, $\partial\Omega$ presents an helical symmetry of axis $-y' + \text{Span}(z)$, where $w_2 = -\Lambda_0 y' + pn$. Therefore, this implies $z = an$ for a certain $a\in\mathbb{R}$, $a\neq 0$, and $\Lambda_0 y' = \Lambda_0 y$, so that:
\begin{align}
w_2 = -\Lambda_0 y + pn.
\end{align}
If finally $\Lambda = 0$ and $w_2 \neq 0$, then $w_2$ is colinear to $n$. In general, $\Lambda_0$ and $w_2$ have then the forms:
\begin{align}
\Lambda_0x =  an\wedge x \hspace{3mm}\forall x \in \mathbb{R}^3, \hspace{3mm} \text{and} \hspace{3mm} w_2 = -an\wedge y + \ell_2 n,
\end{align}
for $a,\ell_2 \in \mathbb{R}$.\\
Therefore, if $\partial\Omega$ is the union of one or two coaxial cylinders of revolution, any local Maxwellian solving the Boltzmann equation with the boundary condition \eqref{EQUATChptrLgTBhSpecuRefle_B_C_} is necessarily of the form \eqref{EQUATTheorMaxweSRBC__d=3_Cylin}.\\
\newline
\underline{$\partial\Omega$ is a sphere.} Let us assume that the sphere $\partial\Omega$ is centered on $y \in \mathbb{R}^3$. In this case, the following assumptions lead to a direct contradiction:
\begin{align}
&\alpha \neq 0,\hspace{15mm} [\hspace{1mm} \alpha = 0 \hspace{1.5mm}\text{and} \hspace{1.5mm} w_1 \neq 0 \hspace{1mm}],\hspace{15mm} [\hspace{1mm} \Lambda_0 \neq 0 \hspace{1.5mm}\text{and} \hspace{1.5mm} \beta \neq 0 \hspace{1mm}],\nonumber\\
&\hspace{12mm} [\hspace{1mm} \Lambda_0 = 0 \hspace{1.5mm}\text{and} \hspace{1.5mm} \beta \neq 0 \hspace{1mm}],\hspace{15mm} [\hspace{1mm} \Lambda_0 = 0, \hspace{1.5mm} \beta = 0, \hspace{1.5mm}\text{and} \hspace{1.5mm} w_2 \neq 0 \hspace{1mm}].
\end{align}
We deduce in particular that $\alpha = 0$, $\beta = 0$, $w_1 = 0$, and if $\Lambda_0 = 0$, then $w_2 = 0$.\\
In addition, if $\Lambda_0 \neq 0$, represented by the vector $z \in \mathbb{R}^3$, according to Lemma \ref{LEMMEChptrLgTBhSRODELambd}, $\partial\Omega$ presents an helical symmetry, of axis $y + \text{Span}(z)$, with $w_2 = -\Lambda_0 y + pn$. In this case, since $\partial\Omega$ is a sphere, the shift of the helical symmetry has to be zero, so that:
\begin{align}
w_2 = -\Lambda_0 y.
\end{align}
Therefore, if $\partial\Omega$ is a sphere, only the local Maxwellians of the form \eqref{EQUATTheorMaxweSRBC__d=3_Spher} can solve the Boltzmann equation with the boundary condition \eqref{EQUATChptrLgTBhSpecuRefle_B_C_}.\\
\newline
\underline{$\partial\Omega$ presents an helical symmetry, but is not a sphere nor a generalized cylinder.} Let us denote by $y + \text{Span}(n)$ the axis of the helical symmetry of $\partial\Omega$, with $y \in \mathbb{R}^3$ and $n\in\mathbb{S}^2$, and let us denote by $2\pi p$ the shift of this helical symmetry, with $p \in \mathbb{R}$. As for the previous case, the following cases lead to a direct contradiction:
\begin{align}
&\alpha \neq 0,\hspace{15mm} [\hspace{1mm} \alpha = 0 \hspace{1.5mm}\text{and} \hspace{1.5mm} w_1 \neq 0 \hspace{1mm}],\hspace{15mm} [\hspace{1mm} \Lambda_0 \neq 0 \hspace{1.5mm}\text{and} \hspace{1.5mm} \beta \neq 0 \hspace{1mm}],\nonumber\\
&\hspace{12mm} [\hspace{1mm} \Lambda_0 = 0 \hspace{1.5mm}\text{and} \hspace{1.5mm} \beta \neq 0 \hspace{1mm}],\hspace{15mm} [\hspace{1mm} \Lambda_0 = 0, \hspace{1.5mm} \beta = 0, \hspace{1.5mm}\text{and} \hspace{1.5mm} w_2 \neq 0 \hspace{1mm}].
\end{align}
We deduce then that $\alpha = 0$, $\beta = 0$, $w_1 = 0$, and if $\Lambda_0 = 0$ and $\beta = 0$, then $w_2 = 0$ as well.\\
According to Lemma \ref{LEMMEDoublSymetHelic}, since $\partial\Omega$ is not a plane (which is a particular case of a generalized cylinder), nor a cylinder of revolution (another particular case of a generalized cylinder), nor a sphere, we deduce that $\partial\Omega$ presents a single helical symmetry. Therefore, the direction of $z$, representing $\Lambda_0$ is fixed by $n$, and the component of $w_2$ along $n$ is fixed as well by the shift of the helical symmetry. More precisely,  if $\Lambda_0 \neq $, there exists $a\in\mathbb{R}$, $a\neq 0$, such that:
\begin{align}
\Lambda_0 c = a n \wedge x \hspace{2mm} \forall x \in \mathbb{R}^3,
\end{align}
and
\begin{align}
w_2 = -\Lambda_0y + apn = a \left( -n\wedge y + pn\right).
\end{align}
As a consequence, if $\partial\Omega$ presents an helical symmetry, but is not a sphere nor a generalized cylinder, then only the local Maxwellians of the form \eqref{EQUATTheorMaxweSRBC__d=3_GnCyl} solve the Boltzmann equation with the specular reflection boundary condition.\\
\newline
\underline{$\partial\Omega$ is a generalized cylinder which does not present an helical symmetry.} Let us assume that the direction of boundary $\partial\Omega$, as a generalized cylinder, is given by a vector $n\in\mathbb{S}^2$. In the present case, the following cases lead to a direct contradiction:
\begin{align}
&\alpha \neq 0,\hspace{12mm} [\hspace{1mm} \Lambda_0 \neq 0 \hspace{1.5mm}\text{and} \hspace{1.5mm} \beta \neq 0 \hspace{1mm}],\hspace{12mm} [\hspace{1mm} \Lambda_0 \neq 0 \hspace{1.5mm}\text{and} \hspace{1.5mm} \beta = 0 \hspace{1mm}],\hspace{12mm} [\hspace{1mm} \Lambda_0 = 0 \hspace{1.5mm}\text{and} \hspace{1.5mm} \beta \neq 0 \hspace{1mm}].
\end{align}
In particular, we deduce that $\alpha = 0$, $\beta = 0$ and $\Lambda_0 = 0$. In addition, if $w_1$ or $w_2$ are not zero, then they have to be colinear to $n$. Hence the conclusion: only the local Maxwellians of the form \eqref{EQUATTheorMaxweSRBC__d=3_GnCyl} can be solutions of the Boltzmann equation with the boundary condition \eqref{EQUATChptrLgTBhSpecuRefle_B_C_}.\\
\newline
\underline{$\partial\Omega$ does not present an helical symmetry, and is not a generalized cylinder.} In this final case, if any of the parameters $\alpha$, $\beta$, $\Lambda_0$, $w_1$ or $w_2$ is not zero, we obtain a contradiction. Therefore, only the parameter $\gamma$ can be non-zero if a local Maxwellian $m$ solves the Boltzmann equation with the specular reflection boundary condition in such a domain, hence \eqref{EQUATTheorMaxweSRBC__d=3_Gener}.\\
\newline
All the cases were investigated, concluding the proof of Theorem \ref{THEORChptrLgTBhSpecuRefleBCd=3}.
\end{proof}

\noindent
\textbf{Acknowledgements.} The author is grateful to L. Desvillettes, I. Karabash and B. Kepka for fruitful discussions and comments. The author is particularly thankful to I. Karabash for suggesting the use of the Lie brackets to study the surfaces with two helical symmetries.\\
This article originated from a lecture given at the University of Bonn during the Fall semester of 2023. The author is grateful to the students that attended this lecture, in particular to E. Dematt\`e and E. H\"ubner-Rosenau. Their comments helped improving the presentation of some results contained in the present article.\\
The author gratefully acknowledges the financial support, on the one hand when he was affiliated to the University of Bonn until August 2024, of the Hausdorff Research Institute for Mathematics through the collaborative research center The mathematics of emerging effects (CRC 1060, Project-ID 211504053), and the Deutsche Forschungsgemeinschaft (DFG, German Research Foundation), and on the other hand since September 2024, of the project PRIN 2022 (Research Projects of National Relevance) - Project code 202277WX43, based at the University of L’Aquila.

E-mail address: \texttt{theophile.dolmaire@univaq.it}.

\end{document}